\documentclass[reqno,11pt]{amsart}
\usepackage{cases}
\usepackage{mathrsfs}
\usepackage[T1]{fontenc}
\usepackage{mathrsfs}
\usepackage{amsmath,latexsym,amssymb,amsfonts,amsbsy, amsthm}
\usepackage{bm}
\usepackage[usenames]{color}
\usepackage{xspace,colortbl}
\usepackage{epsfig}
\usepackage{graphicx}
\usepackage{subfigure}
\usepackage{amsmath,amsfonts,amsthm,amssymb,amscd}
\input amssym.def 
\input amssym.tex
\usepackage{color}
\usepackage[title,titletoc,toc]{appendix}
\usepackage{bigints}
\allowdisplaybreaks[4]

\newtheorem {remark}{Remark}[section]
\newtheorem{theorem}{Theorem}[section]
\newtheorem{lemma}{Lemma}[section]
\newtheorem{definition}{Definition}[section]

\numberwithin{equation}{section}
\title{On a two-dimensional Camassa-Holm-Zakharov-Kuznetsov equation}

\author[Li]{Xixuan Li}
\address{Xixuan Li\newline
	School of Mathematics and Statistics, Huazhong University of Science and Technology, Wuhan, Hubei 430074, China}
\email{xixuanli@hust.edu.cn}

\author[Yan]
{Kai Yan\textsuperscript{*}}
\thanks{\noindent $^*$Corresponding author.}
\address{Kai Yan (Corresponding author) \newline
	School of Mathematics and Statistics, Huazhong University of Science and Technology, Wuhan, Hubei 430074, China}
\email{kaiyan@hust.edu.cn}

\author[Zhao]{Tiantian Zhao}
\address{Tiantian Zhao \newline
	School of Mathematics and Statistics, Huazhong University of Science and Technology, Wuhan, Hubei 430074, China}
\email{zhaotiantian@hust.edu.cn}

\begin{document}
	\begin{abstract}
		This paper is devoted to a new two-dimensional nonlinear dispersive wave model named as the Camassa-Holm-Zakharov-Kuznetsov (CH-ZK) equation which combines the nonlinear structure of the Camassa-Holm equation with the transverse Laplacian dispersion of the Zakharov-Kuznetsov equation.  We first establish the local well-posedness of its Cauchy problem in a suitable Sobolev space and derive a blow-up criterion for strong solutions. Then the finite-time blow-up strong solutions for the CH-ZK equation have been constructed. Moreover, we prove a unique continuation property for the solutions to the CH-ZK equation. Finally, we investigate the existence of both peaked and smooth solitary waves, and obtain a rigidity theorem for the traveling wave solutions according to the magnitude of wave speed.
	\end{abstract}
	
	\maketitle
	

	
	\noindent {\sl Keywords\/}:  Zakharov-Kuznetsov equation, Camassa-Holm equation,  Blow-up, Unique continuation property, Solitary wave.
	
	\vskip 0.2cm
	
	\noindent {\sl AMS Subject Classification} (2022): 35Q53; 35G25; 35B44; 76B25
	\renewcommand{\theequation}{\thesection.\arabic{equation}}
	\setcounter{equation}{0}

	\section{Introduction}
	The study of shallow water waves has attracted sustained attention over many decades. The Korteweg-de Vries (KdV) equation is a classical model in the study of water waves, which is completely integrable and admits smooth soliton solutions \cite{Korteweg-de Vries 1895}. It takes the form
	\begin{equation}\label{KdV equ}
		u_t + 6uu_x + u_{xxx} = 0.
	\end{equation}
	From the KdV equation, adding transverse effects leads to the following Kadomtsev-Petviashvili (KP-II) equation \cite{KP-70}
	\begin{equation}\label{KP-II equation}
		\left(  u_t + uu_x + u_{xxx} \right)_x + u_{yy} = 0 ,
	\end{equation}
	which is a two-dimensional generalization of the KdV equation. It is completely integrable and derived as a model for propagation of the weakly transverse water waves in a long wave regime \cite{KP-70}. The well-posedness of the KP-II equation has been studied extensively. For instance, it is globally well-posed in $L^2(\mathbb{R}^2)$ \cite{Bourgain GFA 1993}, in $H^s(\mathbb{R}\times\mathbb{T}) ~\text{for}~ s>0$ \cite{Molinet-Saut-Tzvetkov AIHCANL 2011}, and in the anisotropic homogeneous space $\dot{H}^{-\frac12,0}(\mathbb{R}^2)$ \cite{Hadac-Herr-Koch AIHCANL 2009}. Further discussions for the KP-II equation (\ref{KP-II equation}) may be found in the aforementioned papers and references therein.
	
	It is worth noting that the higher-dimensional generalizations of the KdV equation are not limited to the KP-II equation. Based on the fluid dynamics system, Zakharov and Kuznetsov systematically derived another important generalization, that is the following two dimensional Zakharov-Kuznetsov (ZK) equation \cite{Zakharov-Kuznetsov JETP 1974}
	\begin{equation}\label{ZK1}
		u_t+uu_x+u_{xxx}+u_{xyy}=0,
	\end{equation}
	as well as the higher dimensional version as follows:
	\begin{equation}\label{ZK2}
		u_t+\partial_{x_1}(\Delta u+u^2)=0,
	\end{equation}
	where \(u(t,x_1,y):[0,\infty)\times\mathbb{R}\times\mathbb{R}^{d-1}\to\mathbb{R}, ~\Delta:=\partial_{x_1}^2+\sum\limits_{j=1}^{d-1}\partial_{y_j}^2\), and $d$ is the spatial dimension. The ZK equation models the effect of a magnetic field on ion-acoustic waves in a plasma \cite{Linares-Panthee-Robert-Tzvetkov 2019,Seadawy 2014}. In recent years, the well-posedness theory for the ZK equations has been extensively studied. In the two-dimensional case, Faminskii \cite{Faminskii 1995} proved the global well-posedness of the Cauchy problem for the ZK equation in the energy space \( H^1(\mathbb{R}^2) \). Gr\"unrock and Herr \cite{GrunrockHerr 2014} as well as Molinet and Pilod \cite{Molinet-Pilod 2015} lowered the threshold to $H^s(\mathbb{R}^2)$ with \( s > \frac{1}{2} \). In higher dimensions, Ribaud and Vento proved its global well-posedness in \cite{Ribaud-Vento 2012 }. On the other hand, under suitable initial conditions, its blow-up solutions in finite-time have been constructed in \cite{Korpusov2014}.
	Besides, the unique continuation results of the ZK equation can be found in \cite{Bustamante-Isaza-Mejia JDE 2011,Bustamante-Isaza-Mejia JFA 2013,Panthee 2004}.
	
	Although both the KdV and KP equations capture weakly nonlinear and weakly dispersive effects, they fail to describe strongly nonlinear phenomena like wave breaking and peaked solitary waves. The  celebrated Camassa-Holm (CH) equation possesses such features and reads as follows:
	\begin{equation}\label{CH equation}
		u_t - u_{txx} + \kappa u_x + 3uu_x - (2u_xu_{xx} + uu_{xxx}) = 0,
	\end{equation}
	where \(\kappa\) is a real parameter, $u(t,x)$ denotes the fluid velocity at time $t$ and in the spatial $x$ direction. The CH equation (\ref{CH equation}) models the unidirectional propagation of shallow water waves over a flat bottom \cite{CH}, and also models the propagation of axially symmetric waves in hyperelastic rods \cite{Dai Act M 1998}, which possesses  a bi-Hamiltonian structure \cite{Fuchssteiner-Fokas PD 1981} and is completely integrable \cite{CH}. In contrast to the KdV equation (\ref{KdV equ}), the CH equation (\ref{CH equation}) captures strongly nonlinear phenomena, including wave breaking\cite{Constantin-Escher-acta} and peaked solitons \cite{CH}. The local well-posedness and blow-up properties of the CH equation were investigated by Constantin and Escher \cite{Constantin-Escher-AsNsP,Constantin-Escher-acta,Constantin-Escher-CPAM}. On the other hand, the global weak solutions  to the CH  equation is investigated in  \cite{Xin-Zhang CPAM 2000}. 
	
	In order to obtain a two-dimensional water wave model that captures strong nonlinearity and weak transverse effects, the CH equation and its two-dimensional extensions have attracted considerable attention. The Camassa-Holm-Kadomtsev-Petviashvili (CH-KP) equation has recently been derived from the three-dimensional Euler equations governing incompressible and irrotational flows, and it takes the form \cite{gui-liu-luo-yin-21}
	\begin{align} \label{CH-KP eqaution}
		\Bigl(u_t - u_{txx} + \kappa u_x + 3uu_x - (2u_xu_{xx} + uu_{xxx})\Bigr)_x + u_{yy} = 0,
	\end{align}
	where \(u = u(t,x,y)\) denotes the horizontal velocity field. The CH-KP \eqref{CH-KP eqaution} generalizes the CH equation (\ref{CH equation}) in the same manner as the KP-II equation (\ref{KP-II equation}) extends the classical KdV equation (\ref{KdV equ}). By incorporating both strong unidirectional dynamics and weak transverse modulations within a unified framework, the CH-KP model constitutes a powerful tool for the analysis of long-crested, moderately nonlinear shallow-water waves, and is directly relevant to geophysical phenomena such as tsunami evolution, tidal bores, and nearshore wave–current interactions. The formal Hamiltonian structure of the CH-KP (\ref{CH-KP eqaution}) was established in \cite{gui-liu-luo-yin-21}. Gui {\it et. al} \cite{gui-liu-luo-yin-21} also studied the local well-posedness in suitable Sobolev space and existence of traveling wave solutions. Later on, the transverse spectral stability of small periodic traveling waves of the equation (\ref{CH-KP eqaution}) was proved in \cite{Geyer-Liu-Pelinovsky JMPA 2024}. In  a related context, the transverse spectral stability of small periodic traveling waves of the $b$-KP equation as a two-dimensional generalization of the $b$-family of CH equation was demonstrated in \cite{Chen-Fan-Wang-Xu MA 2024}.  
	
	Since the KdV equation (\ref{KdV equ}) can be generalized to the KP equation (\ref{KP-II equation}) (by adding transverse derivative terms) as well as to the ZK equation (\ref{ZK1}) or (\ref{ZK2}) (by adding transverse Laplacian terms), and the CH equation (\ref{CH equation}) has already been successfully extended to the CH-KP equation (\ref{CH-KP eqaution}). This naturally raises the following 
	
	{\bf Question:} {\it Does there exist a two-dimensional Camassa-Holm equation that serves as the Camassa-Holm counterpart of the two-dimensional KdV equation (referred to as the Zakharov-Kuznetsov equation)? }

    The goal of the present paper is to study this kind of new model in two dimensions named as the Camassa-Holm-Zakharov-Kuznetsov (CH-ZK) equation which takes the following form
	\begin{equation}\label{ZK equation (1)}
		u_t - u_{xxt} + \kappa u_x + 3uu_x - (2u_xu_{xx}+uu_{xxx})+u_{xyy}=0,
	\end{equation}
	where \(u(t,x,y):[0,\infty)\times\mathbb{R}\times\mathbb{R}\to\mathbb{R}\) and \(\kappa \in \mathbb{R}\). It is not difficult to verify that the CH-ZK equation (\ref{ZK equation (1)}) has two conserved quantities, namely,
	\begin{equation}\label{ZK equation (2')}
		E(u):=\dfrac{1}{2}\int_{\mathbb{R}^2}\Bigl(u^2+u_x^2\Bigr)\,dxdy,\quad\quad\quad 
	\end{equation}
	and
	\begin{equation}\label{ZK equation (2'')}
		F(u):=\dfrac{1}{2}\int_{\mathbb{R}^2}\Bigl(\kappa u^2+u^3+uu_x^2-u^2_y\Bigr)\,dxdy.
	\end{equation}
	Define that
	$$
	J_1:=\partial_x(1-\partial_x^2),\quad\quad J_2:=\partial_x((m+\frac{\kappa}{2})\cdot)+(m+\frac{\kappa}{2})\partial_x+\partial_x\partial_y^2.
	$$
	A simple calculation then reveals that the CH-ZK equation (\ref{ZK equation (1)}) has the formal bi-Hamiltonian structure as follows:
	 $$ m_t = -J_1 \frac{\delta F}{\delta m} = -J_2 \frac{\delta E}{\delta m} ,$$
	 where \( J_1 \) and \( J_2 \) are two skew-symmetric differential operators.
	 
	For convenience, we rewrite (\ref{ZK equation (1)}) with the initial data $u_0$ as the following nonlocal weak form
	\begin{equation}\label{ZK equation (2)}
		\left\{ 
		\begin{aligned}
			&u_t+uu_x+G\ast\partial_x(u^2+\frac{1}{2}u_x^2+\kappa u)+G\ast u_{xyy}=0,\\
			& u(t,x,y)|_{t=0}=u_{0}\left(x,y\right),
		\end{aligned}
		\right.
	\end{equation}
	where \(G(x):=\frac{1}{2}e^{-|x|}\). By the structure of (\ref{ZK equation (2)}), we introduce the following Hilbert space $X^s$ for $s>0$:
	$$X^s=X^s\left(\mathbb{R}^2\right):=\left\{u\in H^s\left(\mathbb{R}^2\right)\mid u_x\in H^s\left(\mathbb{R}^2\right)\right\},
	$$
	equipped with the norm $\Vert u\Vert_{X^s\left(\mathbb{R}^2\right)}:=\left(\Vert u\Vert_{H^s\left(\mathbb{R}^2\right)}^{2}+\Vert u_x\Vert_{H^s\left(\mathbb{R}^2\right)}^{2}\right)^{\frac{1}{2}}$ and the inner product
	$$
	\left(  u, v\right)_{X^s} :=  (u,v)_{H^s} +  (u_x, v_x)_{H^s}, \quad \forall u, v\in X^s(\mathbb{R}^2).
	$$
	
	Now we are in  the position to state the local well-posedness and blow-up criterion for (\ref{ZK equation (2)}).
	\begin{theorem}\label{local ZK}
		Let $u_0 \in X^s(\mathbb{R}^2) $ with $s\geq2$. Then there exists a time $T>0$ such that the Cauchy problem (\ref{ZK equation (2)}) has a unique solution   \[u \in C([0,T];X^s(\mathbb{R}^{2}))\cap C^1([0,T];X^{s-2}(\mathbb{R}^{2}))\]  with the initial value $u_0$. Moreover, the solution $u  $ depends continuously on the initial value $u_0$. In addition, the conservation laws $E(u)$ and $F(u)$ in (\ref{ZK equation (2')})-(\ref{ZK equation (2'')}) are independent of the existence time $t>0$. 
	\end{theorem}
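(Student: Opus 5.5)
We plan to follow the Kato semigroup approach, as was done for the CH-KP equation in \cite{gui-liu-luo-yin-21}, working in the space $X^s$.

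\textbf{Reformulation.} First we rewrite (\ref{ZK equation (2)}) as a quasilinear evolution equation
\[
u_t + A(u)u = f(u),\qquad A(u)=u\partial_x,\qquad f(u)=-G\ast\partial_x\Bigl(u^2+\tfrac12u_x^2+\kappa u\Bigr)-G\ast u_{xyy}.
\]
Here $G\ast$ denotes the operator $(1-\partial_x^2)^{-1}$ acting in $x$ only. The key structural observation concerns the dispersive term. The operator $L:=-(1-\partial_x^2)^{-1}\partial_x\partial_y^2$ has the purely imaginary Fourier symbol $-i\xi\eta^2/(1+\xi^2)\cdot(-1)$. It therefore generates a unitary group on every $H^s$, and it commutes with $\partial_x$ and with $\Lambda^s$, so it is unitary on $X^s$ too.

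Because $L$ is unbounded (it loses $\eta^2$ derivatives), we do not treat it as a source term. Instead we put it into the linear part, $\tilde A(u)=u\partial_x - L$. The remaining source is then
\[
\tilde f(u)=-G\ast\partial_x\Bigl(u^2+\tfrac12u_x^2+\kappa u\Bigr).
\]

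\textbf{Kato's hypotheses.} We take $Y=X^s$, $X=X^{s-1}$ (or $H^{s-1}$-type spaces), and the isomorphism $Q=\Lambda=(1-\Delta)^{1/2}$. We then check the following four conditions.

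\begin{enumerate}
\item[(i)] $\tilde A(u)$ is quasi-m-accretive on $X$ uniformly for $u$ in balls of $Y$. Skew-adjointness of $L$ together with the standard estimate $(u\partial_x w,w)\le \tfrac12\|u_x\|_{L^\infty}\|w\|^2$ gives the dissipativity. The range condition follows from the unitary group generated by $L$ perturbed by transport.
\item[(ii)] The commutator $Q\tilde A(u)Q^{-1}-\tilde A(u)=[\Lambda,u\partial_x]\Lambda^{-1}$ is bounded, with a Lipschitz bound in $u$ in $Y$. This is the Kato--Ponce commutator estimate, using $s\ge2>\frac d2+1$; note $L$ commutes with $\Lambda$.
\item[(iii)] $\tilde A(u)$ is bounded $Y\to X$ and Lipschitz in $u$, which is immediate.
\item[(iv)] $\tilde f$ is bounded and Lipschitz on balls of $Y$, both into $Y$ and into $X$. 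For this, note $G\ast\partial_x$ is bounded $H^{r}\to H^{r}$ and $H^r\to$ ($\partial_x$ of it) $H^r$, so it maps $H^r\to X^r$. Since $H^s(\mathbb R^2)$ is an algebra for $s>1$, the quantities $u^2$ and $u_x^2$ lie in $H^s$ when $u\in X^s$.
\end{enumerate}

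Kato's theorem then gives existence, uniqueness and continuous dependence, with $u\in C([0,T];X^s)\cap C^1([0,T];X^{s-1})$. The time regularity claimed, $C^1([0,T];X^{s-2})$, follows directly from the equation, since $u_t$ involves $Lu$, which costs two $y$-derivatives and places $u_t$ in $X^{s-2}$. We therefore expect to choose $X=X^{s-2}$ with $Q=\Lambda^2$ to match the statement, and we will adapt the commutator estimate accordingly.

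\textbf{Conservation laws.} These are verified first for smooth data, say $u_0\in X^{s+3}$, and then extended by density using continuous dependence. Writing $m=u-u_{xx}$, the equation becomes
\[
m_t+um_x+2u_xm+\kappa u_x+u_{xyy}=0.
\]
Multiplying by $u$ and integrating gives $\frac{d}{dt}E=0$. In this computation the term $\int u\,u_{xyy}$ vanishes by integration by parts (it equals $\tfrac12\int \partial_x(u_y^2)$), and the CH terms cancel as in the one-dimensional case. For $F$, we differentiate and use $\frac{\delta F}{\delta m}=(1-\partial_x^2)^{-1}(\tfrac\kappa2 u\cdots)$ via the Hamiltonian form $m_t=-J_1\frac{\delta F}{\delta m}$; skew-symmetry of $J_1$ then gives $\frac{d}{dt}F=0$. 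Alternatively, we can do the direct computation, using $\int u_t u_{yy}=-\tfrac12\frac{d}{dt}\int u_y^2$. This step needs enough regularity for the cubic terms, which is supplied by the smooth approximation.

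\textbf{Main obstacle.} We expect the hardest step to be (i), together with the correct placement of the anisotropic dispersive term $G\ast u_{xyy}$. This term is of order two in $y$ but order zero in $x$, and it is not small. The argument must therefore rely on its skew-adjointness on the chosen spaces. We also need the generated group to leave $Y$ invariant, so that Kato's stability conditions hold. Commutation with $\Lambda$ and $\partial_x$ handles this, but the range condition for $u\partial_x-L$ must be checked with care, for instance by viscosity regularization or by citing the perturbation result for skew-adjoint generators plus relatively bounded dissipative operators.
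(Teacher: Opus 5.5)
Your route is genuinely different from the paper's, and it can be made to work, but two steps need repair.

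The paper does not use Kato's theory. It proves the a priori bound $\frac{d}{dt}\|u\|_{X^s}^2\le C\|u\|_{X^s}^3$ by hand. For $s>2$ this uses Lemmas \ref{lemma-local well-posedness-1}--\ref{lemma-local well-posedness-2}; for $s=2$ it uses a separate chain of anisotropic estimates on $\nabla u,\nabla u_x,\nabla u_{xx},u_{yy},u_{xyy}$. It then constructs solutions by Friedrichs regularization and compactness, and gets uniqueness from an estimate for $\|w\|_{L^2}^2+\|w_x\|_{L^2}^2$ of the difference. Continuous dependence and the conservation laws are only sketched.

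You instead absorb the skew-adjoint multiplier $L$ (symbol $i\xi\eta^2/(1+\xi^2)$) into the generator. This gives uniqueness and continuous dependence in $C([0,T];X^s)$ in one stroke, and it makes your density argument for $E$ and $F$ rigorous. For that argument you also need persistence of regularity, so that the smooth approximants live on a common interval. The paper's hand-made estimates, in turn, are exactly what it reuses for the blow-up criterion.

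The price of your route is the range condition in (i). Note that $u\partial_x$ is not $L$-bounded, since the symbol of $L$ tends to $0$ as $|\xi|\to\infty$. So no relative-boundedness perturbation theorem is available. Show instead that the maximal realization of $u\partial_x-L$ is closed, and that it and its adjoint are both quasi-accretive. For this, use convolution mollifiers, which commute with $L$, together with Friedrichs' lemma for the transport part.

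\textbf{First repair.} The choice $X=X^{s-1}$, $Q=\Lambda$ must be dropped throughout, not only for the time regularity. Indeed $L\notin\mathcal{L}(X^s,X^{s-1})$: at $\xi=1$ its symbol grows like $\eta^2$. So (iii) fails for that choice. With $X=X^{s-2}$ and $Q=\Lambda^2$, the commutator in (ii) is explicit:
$B(u)=-\bigl((\Delta u)\partial_x+2\nabla u\cdot\nabla\partial_x\bigr)\Lambda^{-2}$.

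\textbf{Second repair.} The inequality $s\ge2>\frac d2+1$ is false for $d=2$, where one has equality. At the endpoint $s=2$, which the theorem includes, the isotropic Kato--Ponce estimate does not close, because $H^2(\mathbb{R}^2)\not\hookrightarrow W^{1,\infty}$. You must use the anisotropic structure of $X^2$:
\begin{itemize}
\item the embedding $X^2\hookrightarrow W^{1,\infty}$ (Lemma \ref{lemma-local well-posedness-3}) controls $\nabla u\cdot\nabla\partial_x\Lambda^{-2}w$;
\item the mixed-norm bound of Lemma \ref{Sobolev embedding} controls $(\Delta u)\,\partial_x^2\Lambda^{-2}w$, which appears in $\partial_xB(u)w$;
\item the same Lemma \ref{Sobolev embedding} controls $(u_1-u_2)\,w_{xx}$ in the Lipschitz part of (iii).
\end{itemize}
This is precisely the obstruction that forces the paper's separate treatment of $s=2$. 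With these two lemmas, all four of Kato's conditions hold for every $s\ge2$.
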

	\begin{theorem}[Blow-up criterion]\label{blow up criterion ZK}
		Let $u_0\in X^s(\mathbb{R}^2),s\geq2$ and $u$ be the corresponding solution to (\ref{ZK equation (2)}) as in Theorem \ref{local ZK}. Asumme $T^*$ is the maximal time of existence. Then
		\begin{equation}
			T^*<\infty\Longrightarrow\int_{0}^{T^*}\| \nabla u({\tau})\|_{L^{\infty}(\mathbb{R}^2)}\,d\tau=\infty.
		\end{equation}
	\end{theorem}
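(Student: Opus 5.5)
We argue by energy estimates in $X^s$ combined with a contradiction argument, in the spirit of the Beale--Kato--Majda type criteria for the Camassa--Holm equation. Suppose $T^*<\infty$ but $\int_0^{T^*}\|\nabla u(\tau)\|_{L^\infty}\,d\tau<\infty$. We aim to show that $\|u(t)\|_{X^s}$ stays bounded on $[0,T^*)$. Theorem \ref{local ZK} gives an existence time depending only on $\|u_0\|_{X^s}$, so restarting the solution near $T^*$ then extends it beyond $T^*$, which is the desired contradiction. By a density argument (mollifying $u_0$ and using the continuous dependence in Theorem \ref{local ZK}), we may assume the solution is smooth enough to justify all the computations below.

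Set $\Lambda^s=(1-\Delta)^{s/2}$ and estimate $\frac{d}{dt}\bigl(\|\Lambda^s u\|_{L^2}^2+\|\Lambda^s u_x\|_{L^2}^2\bigr)$ using the equation \eqref{ZK equation (2)} for $u$ and its $x$-derivative for $u_x$. The dispersive term $G\ast u_{xyy}$ is skew-adjoint, since $\partial_x$ commutes with $G\ast$ and $\partial_y^2$ and $G\ast$ is self-adjoint. Hence it contributes nothing to $(\Lambda^s u,\Lambda^s\,\cdot)$ or to $(\Lambda^s u_x,\Lambda^s\partial_x\,\cdot)$. The term $\kappa G\ast u_x$ is skew-adjoint for the same reason. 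The remaining terms are handled as follows.

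\begin{itemize}
\item \textbf{Transport term in the $u$-equation.} For $uu_x$ we write $(\Lambda^s(uu_x),\Lambda^s u)=([\Lambda^s,u]\partial_xu,\Lambda^s u)-\frac12\int u_x(\Lambda^s u)^2$. The Kato--Ponce commutator estimate $\|[\Lambda^s,f]g\|_{L^2}\lesssim\|\nabla f\|_{L^\infty}\|\Lambda^{s-1}g\|_{L^2}+\|\Lambda^s f\|_{L^2}\|g\|_{L^\infty}$ bounds this by $C\|\nabla u\|_{L^\infty}\|u\|_{X^s}^2$.
\item \textbf{Transport term in the $u_x$-equation.} The $x$-derivative of the equation contains $uu_{xx}+u_x^2$. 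We treat $uu_{xx}$ by the same commutator argument, giving $([\Lambda^s,u]\partial_x u_x,\Lambda^s u_x)-\frac12\int u_x(\Lambda^s u_x)^2$. This is bounded by $C(\|\nabla u\|_{L^\infty}\|\Lambda^s u_x\|_{L^2}+\|\Lambda^s u\|_{L^2}\|u_{xx}\|_{L^\infty})\|u_x\|_{H^s}$. The $u_{xx}$ factor is the problem: it is not controlled by $\|\nabla u\|_{L^\infty}$. To fix this, apply the commutator estimate with $f=u$ and $g=\partial_x u_x$, using the variant $\|[\Lambda^s,f]\partial_x g\|_{L^2}\lesssim\|\nabla f\|_{L^\infty}\|\Lambda^s g\|_{L^2}+\|\Lambda^s\nabla f\|_{L^2}\|g\|_{L^\infty}$. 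With $g=u_x$ this yields $\|\nabla u\|_{L^\infty}\|u_x\|_{H^s}+\|\nabla u\|_{H^s}\|u_x\|_{L^\infty}$.
\item \textbf{Anisotropy obstacle.} Here $\|\nabla u\|_{H^s}$ includes $\|u_y\|_{H^s}$, which is not controlled by the $X^s$ norm, since $X^s$ only adds $x$-derivatives. I expect this to be the main obstacle. The fix is to split the commutator: $u\,\partial_x u_x$ only differentiates in $x$, so $[\Lambda^s,u]\partial_x u_x$ should be estimated with the commutator structure adapted to $\partial_x$. Concretely, use the Kato--Ponce product estimate $\|\Lambda^s(fg)-f\Lambda^s g\|_{L^2}\lesssim\|\nabla f\|_{L^\infty}\|\Lambda^{s-1}g\|_{L^2}+\|\Lambda^s f\|_{L^2}\|g\|_{L^\infty}$ with $f=u$ and $g=u_{xx}$, and move one $x$-derivative by writing $\Lambda^{s-1}u_{xx}$, which is controlled by $\|u_x\|_{H^s}$. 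The term $\|\Lambda^s u\|_{L^2}\|u_{xx}\|_{L^\infty}$ is then avoided by symmetrizing, i.e.\ writing $uu_{xx}=\partial_x(uu_x)-u_x^2$ and handling $\partial_x(uu_x)$ via the $(\Lambda^s\partial_x\cdot,\Lambda^s u_x)$ structure with commutators in which the derivative falls on $u$ only as $\nabla u$.
\item \textbf{Products and nonlocal terms.} Products such as $u_x^2$ are bounded by the Moser estimate $\|fg\|_{H^s}\lesssim\|f\|_{L^\infty}\|g\|_{H^s}+\|f\|_{H^s}\|g\|_{L^\infty}$, giving $\|u_x\|_{L^\infty}\|u_x\|_{H^s}$. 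The nonlocal terms $G\ast\partial_x(u^2+\frac12u_x^2)$ and $\partial_x^2 G\ast(\cdot)=G\ast(\cdot)-(\cdot)$ are bounded in $H^s$ by the same product estimates, using that $G\ast$ and $\partial_xG\ast$ are bounded on $H^s$. Finally, $\|u\|_{L^\infty}$ is bounded by $\|u\|_{H^2}\lesssim$ the norm at $s=2$; alternatively it can be folded in, since $E(u)$ is conserved.
\end{itemize}

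Collecting these bounds gives
\[
\frac{d}{dt}\|u\|_{X^s}^2\le C\bigl(1+\|\nabla u\|_{L^\infty}+\|u\|_{L^\infty}\bigr)\|u\|_{X^s}^2 .
\]
To handle $\|u\|_{L^\infty}$, control it via $\|u\|_{L^\infty}\le\|u_0\|_{L^\infty}+\int_0^t\|\partial_tu\|_{L^\infty}$, or else bound it by the $s=2$ energy estimate first; both routes need only $\|\nabla u\|_{L^\infty}$. Gronwall's inequality then gives
\[
\|u(t)\|_{X^s}\le\|u_0\|_{X^s}\exp\Bigl(C\int_0^t\bigl(1+\|\nabla u\|_{L^\infty}\bigr)d\tau\Bigr),
\]
which is bounded on $[0,T^*)$ and completes the contradiction.
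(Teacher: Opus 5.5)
\textbf{There is a genuine gap: your third bullet fails.} With it goes your central inequality
\[
\frac{d}{dt}\|u\|_{X^s}^2\le C\bigl(1+\|\nabla u\|_{L^\infty}+\|u\|_{L^\infty}\bigr)\|u\|_{X^s}^2 .
\]
This inequality is not merely unproved; it is false as an instantaneous bound.

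In the energy identity, the term $-(\Lambda^s(uu_{xx}),\Lambda^s u_x)$ contains the low--high interaction in which all $s$ derivatives fall on the undifferentiated $u$:
\[
-\int u_{xx}\,\Lambda^s u\,\Lambda^s u_x\,dxdy=\frac12\int u_{xxx}(\Lambda^s u)^2\,dxdy .
\]
Trading the low-frequency coefficient $u_{xx}$ for $\|\nabla u\|_{L^\infty}$ costs one more derivative on $\Lambda^s u$ in \emph{every} direction, i.e.\ $\|u\|_{H^{s+1}}$. But $X^s$ supplies the extra derivative only in $x$.

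Your symmetrization $uu_{xx}=\partial_x(uu_x)-u_x^2$ does not remove this. After integrating by parts in $x$, the piece $u_x\Lambda^s u$ of $\Lambda^s(uu_x)$ is paired with $\Lambda^s u_{xx}$. Moving $\partial_x$ back returns exactly $\int u_{xx}\Lambda^s u\,\Lambda^s u_x$.

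A test configuration shows nothing can rescue this. Take $u=A+\varepsilon\cos(Ny)B$ with $N\to\infty$.
\begin{itemize}
\item The top-order part of $\frac{d}{dt}\|u\|_{X^s}^2$ is a fixed nonzero multiple of $\varepsilon^2N^{2s}\int A_{xxx}B^2$, plus terms bounded by $C\varepsilon^2N^{2s}\|A_x\|_{L^\infty}(\|B\|_{L^2}^2+\|B_x\|_{L^2}^2)$.
\item The oscillating part carries $X^s$-energy $\approx\varepsilon^2N^{2s}(\|B\|_{L^2}^2+\|B_x\|_{L^2}^2)$.
\item Choose $A=-M^{-1}\cos(Mx)\chi$ and $B=b\,(1\pm M^{-1}\sin(Mx))$ with fixed bumps $\chi,b$, $\varepsilon=N^{-1}$, and $N^{s-1}\gg M^s$.
\item Then $|\int A_{xxx}B^2|\sim M$, while $\|u\|_{L^\infty}+\|\nabla u\|_{L^\infty}$ and $\|B\|_{L^2}+\|B_x\|_{L^2}$ stay $O(1)$, and the oscillating part dominates $\|u\|_{X^s}^2$.
\end{itemize}
So no Gronwall argument on $\|u\|_{X^s}$ alone can close with $\|\nabla u\|_{L^\infty}$.

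\textbf{What is missing is the paper's hierarchy,} which feeds time-integrated information from lower levels into higher ones.
\begin{itemize}
\item At the lowest level, $\|\nabla u\|_{L^2}^2+\|\nabla u_x\|_{L^2}^2$, the same dangerous product appears as $\int u_{xx}\nabla u\cdot\nabla u_x$. There $u_{xx}$ belongs to the energy itself, so $\|u_{xx}\|_{L^2}\|\nabla u\|_{L^\infty}\|\nabla u_x\|_{L^2}$ closes the estimate with $\|\nabla u\|_{L^\infty}$ alone; see \eqref{case s=2 fanshuguji-a-jieguo}.
\item At each higher level ($\nabla u_{xx}$, then $u_{yy},u_{xyy}$, then the $s=3$ quantities), $u_{xx}$ is never put in $L^\infty$. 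Lemma \ref{Sobolev embedding} bounds products such as $u_{xx}\nabla u_x$ or $u_{xx}u_{yy}$ in $L^2$ via $L^2_xL^\infty_y\times L^\infty_xL^2_y$. This gives the square of the current top norm times norms already bounded at the previous level in terms of $\int_0^{T^*}\|\nabla u\|_{L^\infty}d\tau$.
\item Only after $X^3$ is controlled does the paper run the $\Lambda^s$ estimate \eqref{xian yan gu ji 1} for $s>3$. There the plain Kato--Ponce term $\|u_{xx}\|_{L^\infty}\|u\|_{H^s}$ is harmless, since $\|u_{xx}\|_{L^\infty}\lesssim\|u\|_{X^3}$.
\item The range $2<s<3$ then follows by interpolation.
\end{itemize}

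\textbf{Your treatment of $\|u\|_{L^\infty}$ also needs repair.} The route through $\partial_t u$ fails because $\partial_t u$ contains $G\ast u_{xyy}$, which $\|\nabla u\|_{L^\infty}$ does not control. Invoking the $s=2$ estimate instead is circular. Use \eqref{remark-lemma-local well-posedness-3} together with conservation of $E(u)$.
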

	\begin{remark}
		For the proof of Theorem \ref{blow up criterion ZK}, we carry out a detailed analysis depending on the regularity index $s$. More precisely,  by using the Sobolev embedding, we sequentially handle the cases $s=2$, $s=3$ and $s>3$, and the final intermediate case $2<s<3$ is then treated via the interpolation and density arguments. 
	\end{remark}
	
	\begin{remark}
		The local well-posedness result (Theorem \ref{local ZK}) and blow-up criterion (Theorem \ref{blow up criterion ZK}) also work for the Sobolev space $X^s(\Omega_1 \times \Omega_2)$, $s \geq 2$, where $\Omega_1$ and $\Omega_2$ are either the whole line $\mathbb{R}$ or the unit circle $\mathbb{T} = \mathbb{R}/\mathbb{Z}$.
	\end{remark}
	It is known that one of the more challenging aspects of the Cauchy problem for nonlinear evolution equation is to construct the  blow-up solutions in finite time. Here, we obtain the finite-time blow-up strong solutions of the Cauchy problem (\ref{ZK equation (2)}) under certain conditions.  Indeed, we track the dynamics of the effective blow-up quantity $B(t):= \int_{\mathbb{R}}  u_xdy$ at $x=0$. After a careful estimates, one gets a Riccati-type differential inequality for 
	\begin{align}
		\frac{d B}{dt} \leq -A_1 B^2(t) + A_2 \quad \text{with $A_1>0$ and $A_2 \geq 0$,} \nonumber 
	\end{align}
	which ensures finite-time blow-up. More precisely, we have the following result. 
	\begin{theorem}\label{blow-up phenomena}
		Fix $\phi \in H^2(\mathbb{R})$ with $\phi \geq 0$ and $\int_{\mathbb{R}} \phi \,dy=1$. Suppose that $u_0 \in X^s(\mathbb{R}^2)$ with $s\geq 2$. If  the corresponding solution $u$ to (\ref{ZK equation (2)}) satisfies that $u(t,x,y) =-u(t,-x,y)$ for all $ t \geq 0$ and $ (x,y) \in \mathbb{R}^2$, and the initial value $u_0$ satisfies
		\begin{align}
			\int_{\mathbb{R}} u_{0,x}(0,y) \phi(y) \,dy < 0, \nonumber 
		\end{align}
		then the corresponding strong solution $u$ blows up in finite time $T^{\ast}$ such that 
		\begin{equation}
			T^{\ast} \leq T_1 := -\frac{2}{\int_{\mathbb{R}}  u_{0,x} (0,y)\phi(y) dy}.\nonumber 
		\end{equation}
	\end{theorem}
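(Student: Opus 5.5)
We track $B(t):=\int_{\mathbb{R}} u_x(t,0,y)\phi(y)\,dy$ and show it satisfies a Riccati inequality $B'\le -\tfrac12 B^2$. Since $s\ge 2$ only gives $u_x(t,0,\cdot)\in H^{1/2}$-type regularity, we first run the argument for smooth data ($s\ge 3$). We then pass to $s\ge2$ by density and continuous dependence (Theorem \ref{local ZK}); the blow-up time bound is stable under this limit because Theorem \ref{blow up criterion ZK} makes the maximal time depend only on $\|\nabla u\|_{L^\infty}$.

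\textbf{Step 1: equation for $u_x$ and evaluation at $x=0$.} Differentiating the first equation of \eqref{ZK equation (2)} in $x$ and using $\partial_x^2 G\ast f=G\ast f-f$ gives
\begin{equation*}
u_{xt}+uu_{xx}+\tfrac12u_x^2-u^2-\kappa u+G\ast(u^2+\tfrac12u_x^2+\kappa u)+G\ast u_{yy}-u_{yy}=0 .
\end{equation*}
By the assumed oddness $u(t,x,y)=-u(t,-x,y)$, we have $u(t,0,y)=0$ and $u_{yy}(t,0,y)=0$. Hence at $x=0$,
\begin{equation*}
\partial_t u_x(t,0,y)=-\tfrac12u_x^2(t,0,y)-\bigl(G\ast(u^2+\tfrac12u_x^2)\bigr)(t,0,y)-\kappa\,(G\ast u)(t,0,y)-(G\ast u_{yy})(t,0,y).
\end{equation*}
Since $u$ is odd in $x$ and $G$ is even, $G\ast u$ is odd in $x$, so $(G\ast u)(0,y)=0$ and likewise $(G\ast u_{yy})(0,y)=\partial_y^2(G\ast u)(0,y)=0$. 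The term $G\ast(u^2+\frac12u_x^2)$ is nonnegative, so
\begin{equation*}
\partial_t u_x(t,0,y)\le -\tfrac12 u_x^2(t,0,y).
\end{equation*}
(Here $G\ast$ acts in $x$ only.) This vanishing of the $\kappa$ and transverse terms is the key point. It means $A_2=0$ in the Riccati inequality, which explains the clean bound $T_1$.

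\textbf{Step 2: averaging against $\phi$.} Multiplying by $\phi\ge0$ and integrating in $y$ gives $B'(t)\le-\frac12\int u_x^2(t,0,y)\phi\,dy$. Because $\int\phi=1$, Jensen (Cauchy--Schwarz) gives $\int u_x^2(t,0,y)\phi\,dy\ge B^2$, hence $B'\le-\frac12B^2$. With $B(0)<0$, integration yields $\frac1{B(t)}\ge \frac1{B(0)}+\frac t2$, so $B(t)\to-\infty$ as $t\uparrow T_1=-2/B(0)$. Since $|B(t)|\le\|u_x\|_{L^\infty}$, it follows that $\|\nabla u\|_{L^\infty}$ is unbounded before $T_1$. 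By Theorem \ref{blow up criterion ZK}, together with the fact that bounded $X^s$ norm would allow continuation, we conclude $T^*\le T_1$.

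\textbf{Main obstacle: justification of the trace computations.} Differentiating $B$ in time and evaluating $u_x$ and the nonlocal terms on the line $x=0$ requires enough regularity. For $s\ge3$, $u_x\in C([0,T];H^3)\cap C^1([0,T];H^1)$ suffices for traces on $\{x=0\}$ and for pairing with $\phi\in H^2$; the $u_{yy}$ trace can also be moved onto $\phi$ by integrating by parts in $y$. For general $s\ge2$ I would approximate $u_0$ by smooth data. I would not rely on preserving oddness along the approximation, since oddness is a hypothesis on the solution; instead I would use the integrated inequality $B(t)\le B(0)/(1+tB(0)/2)$, which passes to the limit since $B$ is continuous in $X^2$-valued topology. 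Continuous dependence then transfers the bound on $T^*$.
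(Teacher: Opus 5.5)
Your Steps 1--2 are the paper's argument. You differentiate the nonlocal equation in $x$ and use oddness to kill $uu_{xx}$, $u^2$, $\kappa u$, $G\ast(\kappa u)$ and the transverse term at $x=0$ (the paper writes that term as $G\ast u_{xxyy}$, you as $G\ast u_{yy}-u_{yy}$). You then discard $-G\ast(u^2+\frac12u_x^2)\le0$ and close with Jensen to get $B'\le-\frac12B^2$. For $s\ge3$ your proof is complete.

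\textbf{The gap is the density step you add for $2\le s<3$.} The integrated bound $B^n(t)\le B^n(0)/(1+tB^n(0)/2)$ for the approximating solutions $u^n$ is just Steps 1--2 applied to $u^n$. Every cancellation there uses that $u^n$ is odd in $x$ for all $t$. Once you decline to rely on oddness of the approximants, there is no inequality left to pass to the limit.

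Nor can oddness be recovered by taking odd smooth data and invoking uniqueness. Under $u\mapsto-u(t,-x,y)$ the terms $\kappa u_x$ and $u_{xyy}$ change sign relative to the rest of the equation, so oddness of the data is not propagated. More precisely, if $u$ is odd in $x$ on a time interval, then $u_t$, $uu_x$ and $G\ast\partial_x(u^2+\frac12u_x^2)$ are odd, while $G\ast\partial_x(\kappa u+u_{yy})$ is even. Each group must therefore vanish separately. On the Fourier side the even group gives $\frac{i\xi(\kappa-\eta^2)}{1+\xi^2}\hat u=0$, which forces $u\equiv0$ in $L^2(\mathbb{R}^2)$. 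So there are no nontrivial odd approximants at all. The same computation shows that the oddness hypothesis of the statement can only be met by $u\equiv0$, which is incompatible with $\int_{\mathbb{R}}u_{0,x}(0,y)\phi\,dy<0$.

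\textbf{The fix is to drop the approximation.} The computation is already rigorous at $s=2$ if you pair with $\phi$ before restricting to $x=0$; this is exactly what $\phi\in H^2$ is for.
\begin{enumerate}
\item[(a)] Set $b(t,x):=\int_{\mathbb{R}}u_x(t,x,y)\phi(y)\,dy$. Since $u_x\in C^1([0,T];L^2(\mathbb{R}^2))$, you get $b\in C^1([0,T];L^2(\mathbb{R}))$, and $b_t$ is the $\phi$-average of the right-hand side of your equation for $u_{xt}$.
\item[(b)] In that right-hand side, $uu_{xx}$, $u_x^2$, $u^2$, $\kappa u$ and $G\ast(u^2+\frac12u_x^2+\kappa u)$ lie in $H^1(\mathbb{R}^2)$ because $X^2\subset\mathrm{Lip}(\mathbb{R}^2)$. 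The only non-traceable terms, $u_{yy}$ and $G\ast u_{yy}$, become $\int u\,\phi''\,dy$ and $G\ast\int u\,\phi''\,dy$ after averaging.
\item[(c)] Hence $b_t\in C([0,T];H^1(\mathbb{R}))\subset C([0,T];C_b(\mathbb{R}))$, so $B(t)=b(t,0)$ is $C^1$ with $B'(t)=b_t(t,0)$.
\item[(d)] At $x=0$: $\int u\phi\,dy$, $\int u^2\phi\,dy$ and $\int u\phi''\,dy$ vanish, as do $G\ast\int u\phi\,dy$ and $G\ast\int u\phi''\,dy$ (odd in $x$). Also $\int uu_{xx}\phi\,dy$ vanishes because $|u(t,x,y)|\le\|u_x\|_{L^\infty}|x|$. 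This yields exactly your Step 1 identity.
\end{enumerate}
Note also that the trace of $u_x(t)\in H^2(\mathbb{R}^2)$ on $\{x=0\}$ lies in $H^{3/2}(\mathbb{R})$, not merely $H^{1/2}$. The objects that are genuinely too rough at $s=2$ are $u_{xt}$ and $u_{yy}$, and the pairing with $\phi$ handles both.
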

	In the case $(x,y)\in \mathbb{R}\times \mathbb{T}$, we have a similar blow-up result as follows.
	\begin{remark}\label{blow-up phenomena (*)}
		Suppose that $u_0 \in X^s(\mathbb{R} \times \mathbb{T})$ with $s\geq 2$. Let $T^{\ast}$ be the maximal existence time of the corresponding solution $u$ to (\ref{ZK equation (2)}) with initial data $u_0$. If the solution $u$ satisfies that $u(t,x,y) =-u(t,-x,y)$ for all $ t\geq 0$ and $(x,y) \in \mathbb{R} \times \mathbb{T}$, and the initial data $u_0$ satisfies 
		\begin{align}
			\int_{\mathbb{T}} u_{0,x} (0,y) dy < 0, \nonumber 
		\end{align}
		then  the corresponding strong solution $u$ blows up in finite time $T^{\ast}$ such that 
		\begin{equation}
			T^{\ast} \leq T_2:= -\frac{2}{\int_{\mathbb{T}} u_{0,x}(0,y) dy} .\nonumber 
		\end{equation}
	\end{remark}
	Next, motivated by the argument in study of CH equation (\ref{CH equation}) in \cite{Linares & Ponce}, we obtain the following unique continuation property for CH-ZK equation (\ref{ZK equation (2)}).
	\begin{theorem}\label{Liuville solution}
		Assume $\kappa=0$. Let $u=u(t,x,y)$ be a non-trivial solution of (\ref{ZK equation (2)}) in $ C([0,T];X^s(\mathbb{R}^{2}))\cap C^1([0,T];X^{s-2}(\mathbb{R}^{2})), s\geq 2$ with the lifespan T. Then there is no any open set $\Omega\subset[0,T]\times\mathbb{R}$ such that $u(t,x,y)=0,\forall (t,x)\in \Omega,y\in\mathbb{R}.$
	\end{theorem}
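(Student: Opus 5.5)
Assume, for contradiction, that $u$ vanishes for all $y\in\mathbb{R}$ and all $(t,x)$ in some open set $\Omega$. Shrinking $\Omega$, we may take it to be a rectangle $(t_1,t_2)\times(a,b)$. Following the strategy of \cite{Linares & Ponce}, we use the nonlocal form (\ref{ZK equation (2)}) with $\kappa=0$. On $\Omega$ the local terms $u_t$ and $uu_x$ vanish identically, so for every $y\in\mathbb{R}$ and $(t,x)\in\Omega$,
\begin{equation*}
	\partial_x G\ast\Bigl(u^2+\tfrac12u_x^2\Bigr)(t,x,y)+\partial_x G\ast u_{yy}(t,x,y)=0,
\end{equation*}
where the convolution is in $x$ only. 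Since $s\ge 2$, all these terms make sense pointwise or in $L^2_y$.

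The key step is to eliminate the linear term $G\ast u_{xyy}$ by integrating in $y$. For fixed $t,x$, $G\ast u_{xyy}=\partial_y^2(G\ast u_x)$, and $G\ast u_x(t,x,\cdot)$ lies in $H^1_y$ with $\partial_y(G\ast u_x)\to 0$ as $|y|\to\infty$. Hence $\int_{\mathbb{R}}G\ast u_{xyy}\,dy=0$. To be careful, I would first integrate against a cutoff $\chi(y/R)$ and let $R\to\infty$, using $u\in X^s$. This leaves
\begin{equation*}
	\int_{\mathbb{R}}\partial_xG\ast\Bigl(u^2+\tfrac12u_x^2\Bigr)(t,x,y)\,dy=0,\qquad (t,x)\in\Omega.
\end{equation*}
Set $f(t,x'):=\int_{\mathbb{R}}\bigl(u^2+\tfrac12u_x^2\bigr)(t,x',y)\,dy\ge0$. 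This is in $L^1_{x'}$, and $f(t,x')=0$ for $x'\in(a,b)$. Using $\partial_xG(z)=-\tfrac12\mathrm{sgn}(z)e^{-|z|}$ and Fubini, the identity becomes, for $x\in(a,b)$,
\begin{equation*}
	-\frac12\int_{-\infty}^{a}e^{-(x-x')}f(t,x')\,dx'+\frac12\int_{b}^{\infty}e^{-(x'-x)}f(t,x')\,dx'=0,
\end{equation*}
that is, $e^{-x}I_-(t)=e^{x}I_+(t)$ for all $x\in(a,b)$. Here $I_-=\int_{-\infty}^a e^{x'}f\,dx'\ge0$ and $I_+=\int_b^\infty e^{-x'}f\,dx'\ge0$ do not depend on $x$.

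Since $e^{-x}$ and $e^{x}$ are linearly independent on an interval, this identity forces $I_-(t)=I_+(t)=0$. Positivity of the weights and of $f$ then gives $f(t,\cdot)\equiv0$, so $u(t,\cdot,\cdot)\equiv0$ for $t\in(t_1,t_2)$. Uniqueness from Theorem \ref{local ZK}, applied forward and backward (the equation is time-reversible under $(t,x)\mapsto(-t,-x)$), or more simply conservation of $E(u)$ from Theorem \ref{local ZK}, gives $E(u)\equiv0$ on $[0,T]$ and hence $u\equiv0$. This contradicts nontriviality.

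The main obstacle is the justification of the $y$-integration. One has to check that the nonlinear terms are integrable in $y$ uniformly in $x$, which holds because $u,u_x\in H^2\subset L^2\cap L^\infty$. One must also check that the linear dispersive term has zero $y$-integral; this requires $\partial_y(G\ast u_x)\in L^1_y$, or the cutoff argument with boundary terms $R^{-1}\|\chi'\|\,\|G\ast u_{xy}\|_{L^2_y}R^{1/2}\to0$. If direct integrability fails, I would test against $\chi(y/R)$, where $\chi$ has compact support and $\int\chi''=0$. The dispersive contribution is then $R^{-2}\int\chi''(y/R)\,G\ast u_x\,dy=O(R^{-3/2})$, while the positive nonlinear part converges to $f$ by monotone convergence. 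The rest of the argument is as above.
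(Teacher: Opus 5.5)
Your proposal is correct and follows essentially the paper's argument: integrate the nonlocal equation in $y$ on $\Omega$ to remove $G\ast u_{xyy}$, then use the sign structure of $G_x=-\frac12\mathrm{sgn}(x)e^{-|x|}$ off the vanishing interval together with $u^2+\frac12u_x^2\ge0$. Your identity $e^{-x}I_-(t)=e^{x}I_+(t)$ is just the $y$-integrated form of the paper's pointwise comparison $F(b,y)\ge F(a,y)$, and two further steps of yours are welcome refinements: the cutoff justification of $\int_{\mathbb{R}}G\ast u_{xyy}\,dy=0$, which the paper does only formally, and the use of conservation of $E(u)$ to pass from one time slice to all of $[0,T]$ instead of running uniqueness backward from $t^*$.
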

	The properties of localized traveling waves play a crucial role in the study of nonlinear dispersive equations. By assuming a peaked solution of the form $u=c e^{-|x+\beta y-ct|}$ and computing the convolution terms in the CH-ZK equation \eqref{ZK equation (2)} in a piecewise manner, we derive the necessary and sufficient condition $\kappa+\beta^2=0$ for the existence of such peakons.  On the other hand, for the smooth solitary waves, by introducing an auxiliary function $\psi$, we obtain the explicit relation between $\xi:=x+\beta y-ct$ and $\psi$, which is strictly monotone and smooth. 
	This guarantees the existence of smooth solitary waves precisely when $\kappa+\beta^2\neq0$. So, these two complementary cases lead to the following theorem which provides a complete classification of traveling solitary waves for the CH-ZK equation.
	\begin{theorem}\label{xingbojie}
		For the CH-ZK equation (\ref{ZK equation (2)}), the following classification holds for traveling-wave solutions of the form
		\[
		u(t,x,y)=\phi(\xi),\qquad \xi=x+\beta y-ct,
		\]
		with $\beta,c\in\mathbb{R}$, and $\phi\to0$ as $|\xi|\to\infty$.
		\begin{enumerate}
			\item[(1)] 
			There exists a global weak solution in the peak form
			\[
			u(t,x,y)=c\,e^{-|x+\beta y-ct|}
			\]
			if and only if
			$
			\kappa+\beta^2=0.
			$
			\item[(2)] 
			There exists a localized smooth solitary-wave solution if and only if
			$\kappa+\beta^2\neq0.$
		\end{enumerate}
	\end{theorem}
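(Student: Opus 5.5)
The plan is to reduce to an ODE in the traveling variable $\xi=x+\beta y-ct$, and then handle the peaked and smooth cases separately.

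\textbf{Reduction.} For $u=\phi(\xi)$ we have $u_t=-c\phi'$, $u_x=\phi'$ and $u_{xyy}=\beta^2\phi'''$. Since the convolution $G\ast$ acts only in $x$, it acts along $\xi$ for such profiles. Hence (\ref{ZK equation (1)}) becomes
\[
-c\phi'+c\phi'''+\kappa\phi'+3\phi\phi'-\bigl(\phi\phi''+\tfrac12\phi'^2\bigr)'+\beta^2\phi'''=0 .
\]
Integrating once and using $\phi\to0$ at infinity gives
\[
(c+\beta^2-\phi)\phi''-\tfrac12\phi'^2+(\kappa-c)\phi+\tfrac32\phi^2=0 .
\]
Multiplying by $2\phi'$ and using $\frac{d}{d\xi}\bigl[(A-\phi)\phi'^2\bigr]=-\phi'^3+2(A-\phi)\phi'\phi''$ with $A=c+\beta^2$, a second integration gives the first integral
\[
(c+\beta^2-\phi)\,\phi'^2=\phi^2\,(c-\kappa-\phi).
\]
Everything in part (2), and the identification of the peaked endpoint in part (1), is read off from this relation.

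\textbf{Peakons, part (1).} Here I would not use the ODE. Instead I would verify the weak form (\ref{ZK equation (2)}) directly for $\phi=c e^{-|\xi|}$, computing on $\xi>0$ and $\xi<0$ separately.
\begin{itemize}
\item By the identity $G\ast\phi''=G\ast\phi-\phi$, the dispersive term is $G\ast u_{xyy}=\beta^2\bigl(G\ast\phi-\phi\bigr)'$.
\item The explicit convolutions are $G\ast e^{-|\xi|}=\frac12(1+|\xi|)e^{-|\xi|}$, together with the standard CH computation of $G\ast(\phi^2+\frac12\phi'^2)$.
\item Inserting these, the residual becomes a combination of $e^{-|\xi|}$, $\xi e^{-|\xi|}$ and $\operatorname{sgn}(\xi)e^{-|\xi|}$ with coefficients depending on $c,\kappa,\beta$.
\end{itemize}
The peakon is a weak solution if and only if all these coefficients vanish. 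I expect the $\xi e^{-|\xi|}$ coefficient to be proportional to $\kappa+\beta^2$.

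The main obstacle is the bookkeeping of the $\operatorname{sgn}$ terms, which come from the $-\beta^2\phi'$ piece and from the linear term. These must be shown to cancel against the CH part when amplitude equals speed. If they do not cancel identically, they will have to be absorbed by carefully reconsidering how $u_{xyy}$ is interpreted distributionally in the weak formulation. Once the cancellation is confirmed, necessity and sufficiency of $\kappa+\beta^2=0$ follow at once.

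\textbf{Smooth solitary waves, part (2).} From the first integral, a localized profile requires $c-\kappa>0$ so that $\phi'^2\sim(c-\kappa)\phi^2/(c+\beta^2)$ near $\phi=0$. This gives exponential decay, provided $c+\beta^2$ has the appropriate sign. The turning point is $\phi_{\max}=c-\kappa$.
\begin{itemize}
\item \emph{Smoothness.} The profile is smooth exactly when the denominator $c+\beta^2-\phi$ does not vanish on $[0,\phi_{\max}]$. This amounts to $c+\beta^2\neq c-\kappa$, i.e. $\kappa+\beta^2\neq0$. The sign of $\kappa+\beta^2$ determines whether we take a positive wave or the reflected negative branch.
\item \emph{Degenerate case.} When $\kappa+\beta^2=0$, the numerator and denominator share the root $\phi=c-\kappa$. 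Then $\phi'^2=\phi^2$ and $\phi'$ jumps at the crest, which is exactly the peakon of part (1), so no smooth wave exists.
\item \emph{Existence.} When $\kappa+\beta^2\neq0$, I introduce $\psi$, for instance $\psi=\sqrt{(c+\beta^2-\phi)/(c-\kappa-\phi)}$ or a similar substitution. This turns $d\xi=\pm\sqrt{(c+\beta^2-\phi)/(c-\kappa-\phi)}\,d\phi/\phi$ into an elementary integral in $\psi$. The result is an explicit, strictly monotone, smooth relation $\xi=\xi(\psi)$ on each half-line. Inverting it gives an even smooth solitary wave decaying at infinity, which completes the equivalence.
\end{itemize}
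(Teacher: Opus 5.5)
Your reduction and first integral are correct, and part (2) follows the paper's route. Part (1), however, fails exactly at the step you flag as the main obstacle, and your proposed fallback cannot repair it.

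Do the bookkeeping for $\phi=a e^{-|\xi|}$. The CH block satisfies $\phi\phi'+G_x\ast(\phi^2+\tfrac12\phi'^2)=a\phi'$; this is the paper's computation leading to (\ref{global weak peak solution 14}), rescaled. Also $G_x\ast\phi=-\tfrac a2\,\xi e^{-|\xi|}$. With your (correct) identity $G\ast u_{xyy}=\beta^2(G_x\ast\phi-\phi')$, the left-hand side of (\ref{ZK equation (2)}) equals
\[
R=(a-c-\beta^2)\,\phi'+(\kappa+\beta^2)\,G_x\ast\phi=a\,(c+\beta^2-a)\operatorname{sgn}(\xi)\,e^{-|\xi|}-\tfrac a2(\kappa+\beta^2)\,\xi e^{-|\xi|}.
\]
For the amplitude $a=c$ demanded by the statement, the $\operatorname{sgn}$-coefficient is $c\beta^2$. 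The term $u_t=-c\phi'$ is used up entirely by the CH block, so nothing remains to cancel $-\beta^2\phi'$. Thus for $\beta\neq0$, $c\neq0$, $\kappa=-\beta^2$ you get $R=c\beta^2\operatorname{sgn}(\xi)e^{-|\xi|}$.

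Reinterpreting $u_{xyy}$ does not help. The weak formulation in the Definition contains $(G\ast u)_{xx}=G\ast u-u$, so the $-u$ piece that produces $-\beta^2\phi'$ is genuinely present. Since $\partial_xR=c\beta^2\bigl(2\delta(\xi)-e^{-|\xi|}\bigr)\neq0$, the residual is not annihilated by testing against $\varphi_x$.

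So $c\,e^{-|x+\beta y-ct|}$ is \emph{not} a weak solution when $\beta\neq0$, and the ``if'' direction of (1) as written cannot be proved by any route. What your computation does prove is this: $a e^{-|\xi|}$ with $a\neq0$ is a weak solution if and only if $\kappa+\beta^2=0$ and $a=c+\beta^2=c-\kappa$. The ``only if'' direction of (1) survives for $c\neq0$, since both coefficients vanishing forces $\beta=\kappa=0$. Your own first integral already shows this: in the degenerate case the crest sits at $\phi=c-\kappa=c+\beta^2$, not at $c$, so it is not ``exactly the peakon of part (1)''.

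The paper reaches the stated equivalence only through the identity $G_x\ast(\partial_y^2u_c)=\beta^2\,G_x\ast u_c$. That identity drops precisely the $-\beta^2\phi'$ term you kept, because $\partial_y^2u_c=\beta^2\phi''$, not $\beta^2\phi$. The defect is therefore in the statement, which should read $(c+\beta^2)e^{-|x+\beta y-ct|}$ or be restricted to $\beta=0$.

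In part (2), your claim that ``the denominator does not vanish on $[0,\phi_{\max}]$'' is equivalent to $c+\beta^2\neq c-\kappa$ is not right. You also need $c>\kappa$ when $\kappa+\beta^2>0$, and $c<\kappa$ with a negative wave when $\kappa+\beta^2<0$. For instance, if $\kappa+\beta^2<0<c+\beta^2$, the positive branch reaches the singular value $\phi=c+\beta^2$ before any turning point, and the negative branch has no turning point at all. Since (2) is existential in $c$, this is easily fixed.

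Your degenerate-case remark, made rigorous, gives the ``only if'' half of (2), which the paper leaves unaddressed. You need two steps: check that $\phi'^2=\phi^2$ extends across the level set $\phi=c+\beta^2$, and then note that a smooth $\phi$ with $\phi'^2=\phi^2$ everywhere and $\phi\to0$ at $\pm\infty$ vanishes identically.
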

	Finally, let us consider the traveling wave solutions of the form $u(x-ct,y)$, where $c\in \mathbb{R}$ denotes the wave speed, then such solutions satisfy the following equation:
	\begin{equation}\label{ZK equation of travelling wave solutions-1}
		\left((\kappa-c)u+cu_{xx}+\frac{3}{2}u^2-\frac{1}{2}u^2_x-uu_{xx}\right)_x+u_{xyy}=0.
	\end{equation} 
	By studying the structure of the equation (\ref{ZK equation of travelling wave solutions-1}), the following theorem demonstrates the behavior of the solutions according to the magnitude of the wave speed $c$. 
	\begin{theorem}\label{Rigidity of solitary waves}
		Let $u(t,x,y)=u(x-ct,y)$ be the solutions to the equation (\ref{ZK equation of travelling wave solutions-1}).
		\begin{enumerate}
			\item[(1)] 
			If $c<\min\left\{0,\kappa\right\}$, then $u\equiv0.$
			\item[(2)] 
			If $c>\max\left\{0,\kappa\right\}$, then u must be symmetric in x-variable.
		\end{enumerate}
	\end{theorem}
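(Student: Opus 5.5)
The plan is to work with the once-integrated profile equation and derive integral identities. We assume the traveling wave is a strong solution decaying at infinity, for instance $u\in X^s(\mathbb{R}^2)$ with $s\ge 2$ large enough that all integrations by parts below are justified. Since every term of (\ref{ZK equation of travelling wave solutions-1}) is an exact $x$-derivative and $u\to0$ as $|x|\to\infty$, integrating once in $x$ gives
\begin{equation*}
(\kappa-c)u+(c-u)u_{xx}+\tfrac32u^2-\tfrac12u_x^2+u_{yy}=0 .
\end{equation*}
Introduce $A=\int u^2$, $B=\int u_x^2$, $C=\int u^3$, $D=\int uu_x^2$ and $Y=\int u_y^2$, all integrals over $\mathbb{R}^2$. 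To prove part (1) we take three multipliers: $u$, $xu_x$ and $yu_y$, where the last two give Pohozaev-type identities.

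Routine integrations by parts should give the following three identities. One step uses $-\int u^2u_{xx}=2D$. Another uses that the terms $\int xu_x^3$ and $\int yu_x^2u_y$ cancel in pairs.
\begin{align*}
&(\kappa-c)A-cB+\tfrac32C+\tfrac32D-Y=0,\\
&-\tfrac12(\kappa-c)A-\tfrac12cB-\tfrac12C+\tfrac12D+\tfrac12Y=0,\\
&-\tfrac12(\kappa-c)A+\tfrac12cB-\tfrac12C-\tfrac12D-\tfrac12Y=0.
\end{align*}
Adding and subtracting the last two identities eliminates the cubic terms: $C=(c-\kappa)A$ and $D=cB-Y$. Substituting into the first identity yields
\begin{equation*}
(c-\kappa)A+cB=5Y .
\end{equation*}
If $c<\min\{0,\kappa\}$, the left-hand side is $\le0$ while the right-hand side is $\ge0$. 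Hence $A=0$, so $u\equiv0$, which proves (1). The only delicate point here is justifying the Pohozaev multipliers $xu_x$ and $yu_y$. For this we would use the decay of $u$ together with a standard cutoff argument $\chi(x/R)$, $\chi(y/R)$, letting $R\to\infty$.

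For part (2) we plan a moving-plane argument in the $x$-direction. For $\lambda\in\mathbb{R}$, set $u_\lambda(x,y)=u(2\lambda-x,y)$ and $w_\lambda=u_\lambda-u$ on $\Sigma_\lambda=\{x<\lambda\}$. The profile equation is invariant under $x\mapsto 2\lambda-x$, so $w_\lambda$ satisfies a linear equation
\begin{equation*}
(c-u)\,\partial_x^2w_\lambda+\partial_y^2w_\lambda+b\,\partial_xw_\lambda+q\,w_\lambda=0,
\end{equation*}
where the zeroth-order coefficient is $q=-(c-\kappa)+O(|u|+|u_x|+|u_{xx}|)$. Since $u\to0$ at infinity and $c>\max\{0,\kappa\}$, two things hold outside a large ball. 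First, $c-u>0$, so the operator is uniformly elliptic there. Second, $q<0$, so the maximum principle holds there for the operator, with no sign condition needed on $w_\lambda$.

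The steps would then be the following.
\begin{enumerate}
\item[(i)] Use the decay of $u$ and $q<0$ near infinity to show $w_\lambda\ge0$ in $\Sigma_\lambda$ for $\lambda$ very negative; more precisely, a negative minimum of $w_\lambda$ cannot occur far out.
\item[(ii)] Let $\lambda$ increase to the critical value $\lambda_0$. Apply the strong maximum principle and the Hopf lemma to conclude $w_{\lambda_0}\equiv0$.
\item[(iii)] Repeat the argument from the other side to obtain symmetry about $x=\lambda_0$.
\end{enumerate}

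The main obstacle is ellipticity in the bounded region, where $c-u$ might vanish. There the equation is degenerate, with the CH-type degeneracy at $u=c$. Our first attempt would be to show that $\sup u<c$. One route is an ODE-type argument along lines $y=\text{const}$ using the equation at a maximum point of $u$. If $c-u$ does degenerate, we would instead try to handle the degenerate set with a narrow-domain or Berestycki–Nirenberg maximum principle adapted to the degenerate coefficient.
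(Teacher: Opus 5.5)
Part (1) is correct and is essentially the paper's argument. The paper's multiplier $y\partial_x^{-1}u_y$ on (\ref{ZK equation of travelling wave solutions-1}) is exactly your $yu_y$ on the integrated equation. Combined with the $u$-identity, it already eliminates $C+D$ and gives $(c-\kappa)A+cB=5Y$, so your $xu_x$ identity is correct but not needed.

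Part (2) has a genuine gap, and it is the one you flag yourself. After the start of the moving-plane scheme, every step requires $(c-u_\lambda)\partial_x^2+\partial_y^2$ to be elliptic in the bulk of $\Sigma_{\lambda_0}$:
\begin{itemize}
\item the strong maximum principle giving $w_{\lambda_0}>0$;
\item the Hopf lemma on $\{x=\lambda_0\}$;
\item the compactness argument excluding negative minima of $w_\lambda$ in a bounded region for $\lambda$ slightly above $\lambda_0$.
\end{itemize}
Nothing in the hypotheses gives $u<c$. Where $u>c$ the operator is hyperbolic, so no maximum principle of any kind is available there (narrow-domain, Berestycki--Nirenberg, or otherwise). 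Where $u=c$, the strong maximum principle and the Hopf lemma break down.

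Your suggested route to $\sup u<c$ also fails. At a maximum with $M=u\ge c$ you have $u_x=0$, and the equation reads $(c-M)u_{xx}+u_{yy}=M\bigl(c-\kappa-\tfrac32M\bigr)$ with $(c-M)u_{xx}\ge 0\ge u_{yy}$. The left side therefore has no sign, and no contradiction follows. Along lines $y=\mathrm{const}$ there is no first integral, because of the forcing $-u_{yy}$; the 1D identity $(c-\phi)\phi'^2=(c-\kappa)\phi^2-\phi^3$ has no 2D analogue.

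A smaller point concerns step (i). With your splitting $(c-u)\partial_x^2w_\lambda$, the coefficient $q$ contains $(u_\lambda)_{xx}$, which need not be small at far-out points of $\Sigma_\lambda$ when $\lambda\ll0$. Write the difference instead as $(c-u_\lambda)\partial_x^2w_\lambda-u_{xx}w_\lambda+\cdots$. Then at a negative minimum $c-u_\lambda>c-u>0$ and $q<\kappa-c+|u_{xx}|+3|u|<0$.

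The missing idea is how the paper sidesteps the degeneracy: it never treats $(c-u)\partial_x^2$ as the principal part.
\begin{itemize}
\item It keeps only the constant-coefficient operator $c-\kappa-c\partial_x^2-\partial_y^2$ on the left and moves $-uu_{xx}$ into the nonlinearity.
\item With $\phi=G\ast u$, this gives $\phi=K\ast[\text{nonnegative}]$, where $\widehat K=(c-\kappa+c\xi^2+\eta^2)^{-1}$.
\item The hypothesis $c>\max\{0,\kappa\}$ is used exactly here: it makes this symbol positive and lets $K$ be written as a positive, $x$-even superposition of heat kernels.
\item Symmetry of $\phi$, and hence of $u=\phi-\phi_{xx}$, then comes from an integral-form moving plane (the paper cites Esfahani--Levandosky). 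That argument needs no ellipticity of the quasilinear operator.
\end{itemize}
The paper only sketches this last step. Since the bracket involves derivatives of $\phi$, it is not a routine application of that reference either.
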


	The remainder of the paper is organized as follows. In Section 2, we establish the local well-posedness  (Theorem \ref{local ZK}) and derive the blow-up criterion (Theorems \ref{blow up criterion ZK}) for the Cauchy problem (\ref{ZK equation (2)}). Section 3 is devoted to  constructing  the finite-time blow-up solutions (Theorem \ref{blow-up phenomena}). In Section 4, we present the unique continuation property (Theorem \ref{Liuville solution}) of the solutions to the CH-ZK equation (\ref{ZK equation (2)}). Finally, in Section 5, by proving Theorems \ref{xingbojie}-\ref{Rigidity of solitary waves},  we describe the traveling-wave solutions to the CH-ZK equation and discuss its symmetry. 
	
	\section{Local well-posedness and blow-up criterion} 
	In this section, we consider the issue of local well-posedness (Theorem \ref{local ZK}) and blow-up criterion(Theorem \ref{blow up criterion ZK}) for the Cauchy problem (\ref{ZK equation (2)}). In order to prove Theorem \ref{local ZK}, we need the following useful lemmas.  
	\begin{lemma}[\cite{Moser J}]\label{lemma-local well-posedness-1}
		Let $\Lambda ^s :=(1-\Delta_{x,y})^{\frac{s}{2}}$ with $s>0,p_1,p_2\in [2,+\infty),q_1,q_2\in (2,+\infty],\frac{1}{p_1}+\frac{1}{q_1}=\frac{1}{p_2}+\frac{1}{q_2}=\frac{1}{2}$. Then there holds
		\begin{equation}\label{lemma-local well-posedness-1-ineq}
			\|fg\|_{H^s(\mathbb{R}^2)}\leq C\left( \|\Lambda^s f\|_{L^{p_1}(\mathbb{R}^2)} \|g\|_{L^{q_1}(\mathbb{R}^2)} + \|\Lambda^s g\|_{L^{p_2}(\mathbb{R}^2)} \|f\|_{L^{q_2}(\mathbb{R}^2)} \right),
		\end{equation}
		where the constant \( C \) is independent of \( f \) and \( g \).
	\end{lemma}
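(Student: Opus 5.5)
The plan is to prove this Kato–Ponce type product estimate through a Littlewood–Paley / paraproduct decomposition, using Bony's splitting
\[
fg = T_f g + T_g f + R(f,g),
\]
where $T_f g=\sum_j S_{j-1}f\,\Delta_j g$ and $R(f,g)=\sum_{|j-k|\le1}\Delta_j f\,\Delta_k g$. The $H^s(\mathbb{R}^2)$ norm is equivalent to $\|\Lambda^s(fg)\|_{L^2}$. By the Littlewood–Paley square-function characterization in $L^2$, the task reduces to bounding
\[
\Big\|\Big(\sum_j 2^{2js}|\Delta_j(fg)|^2\Big)^{1/2}\Big\|_{L^2}.
\]

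First, the paraproduct $T_f g$. Each block $S_{j-1}f\,\Delta_j g$ has Fourier support in an annulus of size $2^j$, so
\[
\|T_f g\|_{H^s}\lesssim \Big\|\Big(\sum_j 2^{2js}|S_{j-1}f|^2|\Delta_j g|^2\Big)^{1/2}\Big\|_{L^2}.
\]
Next I bound $|S_{j-1}f|\le Mf$ pointwise, where $M$ is the Hardy–Littlewood maximal function, and apply H\"older with $\frac1{p_2}+\frac1{q_2}=\frac12$. This gives
\[
\|Mf\|_{L^{q_2}}\Big\|\Big(\sum_j2^{2js}|\Delta_j g|^2\Big)^{1/2}\Big\|_{L^{p_2}}\lesssim \|f\|_{L^{q_2}}\|\Lambda^s g\|_{L^{p_2}}.
\]
Here I use that $M$ is bounded on $L^{q_2}$ for $q_2\in(2,\infty]$, and that the Littlewood–Paley square function of $\Lambda^s g$ is equivalent in $L^{p_2}$ to $\|\Lambda^s g\|_{L^{p_2}}$ since $1<p_2<\infty$. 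The case $p_2=2$, $q_2=\infty$ is direct. The symmetric term $T_g f$ is handled identically with the pair $(p_1,q_1)$.

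The remainder $R(f,g)$ is the main obstacle. Its blocks $\Delta_j f\,\Delta_k g$ with $|j-k|\le1$ are supported only in balls of radius $\sim2^j$, not annuli, so $\Delta_l R$ collects all $j\gtrsim l$. I would estimate
\[
2^{ls}|\Delta_l R|\lesssim\sum_{j\ge l-3}2^{(l-j)s}\,2^{js}|\Delta_l(\Delta_j f\,\tilde\Delta_j g)|.
\]
Since $s>0$, the factor $2^{(l-j)s}$ is summable. A discrete Young inequality in $\ell^2$, together with the vector-valued Fefferman–Stein maximal inequality, bounds the square function of $R$ by that of $\big(2^{js}|\Delta_j f|\,M(\tilde\Delta_j g)\big)_j$. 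Then H\"older with $\sup_j|\tilde\Delta_j g|\le Mg$ yields $\|\Lambda^s f\|_{L^{p_1}}\|g\|_{L^{q_1}}$. The positivity of $s$ is used exactly here; for $s\le0$ the sum would diverge.

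Finally, summing the three pieces gives the stated inequality with $C$ depending only on $s,p_i,q_i$. Since the lemma is classical (Kato–Ponce / Moser), I would in practice cite it and only verify that the endpoint $q_i=\infty$ is covered by the $L^\infty$ bound on $Mf$.
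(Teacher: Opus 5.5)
The paper gives no proof of this lemma: it is quoted as a known result (attributed to Moser, though the fractional $L^p$--$L^q$ form with $\Lambda^s$ is usually credited to Kato--Ponce). Your paraproduct argument is therefore a genuinely different, self-contained route, and it is correct. What it buys over the citation is a clear account of where each hypothesis enters. The condition $q_i>1$ is used for boundedness of the maximal function. The condition $1<p_i<\infty$ is used for the Littlewood--Paley characterization $\|(\sum_j 2^{2js}|\Delta_j g|^2)^{1/2}\|_{L^{p}}\simeq\|\Lambda^s g\|_{L^{p}}$. The condition $s>0$ is used only in the high--high remainder. The citation buys brevity.

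There is one slip in your remainder step. From $|\Delta_l h|\lesssim Mh$ and the discrete Young inequality you obtain the square function of $\bigl(M(2^{js}\Delta_j f\,\tilde\Delta_j g)\bigr)_j$, not of $\bigl(2^{js}|\Delta_j f|\,M(\tilde\Delta_j g)\bigr)_j$. The function $|\Delta_j f|$ cannot be pulled out of the maximal operator. The fix is what your next sentence implicitly uses. The maximal inequality removes the outer $M$, leaving $2^{js}|\Delta_j f|\,|\tilde\Delta_j g|\le 2^{js}|\Delta_j f|\,Mg$, and H\"older then gives $\|\Lambda^s f\|_{L^{p_1}}\|g\|_{L^{q_1}}$.

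Also, since the target norm is $L^2$, the vector-valued Fefferman--Stein inequality is more than you need. The scalar bound suffices: $\|(\sum_j|Mh_j|^2)^{1/2}\|_{L^2}^2=\sum_j\|Mh_j\|_{L^2}^2\lesssim\sum_j\|h_j\|_{L^2}^2$. Likewise, your bound on $T_f g$ is just almost-orthogonality of annularly supported pieces in $L^2$.
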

	\begin{lemma}[\cite{KaTo T-Pomce G}]\label{lemma-local well-posedness-2}
		Let \( \Lambda^s := (1 - \Delta_{x,y})^{\frac{s}{2}} \) with \( s > 0 \). Then the following commutator estimates hold ture:
		\begin{enumerate}
			\item[(i)] $\|[\Lambda^s, f]g\|_{L^2(\mathbb{R}^2)} \leq C \left( \|\Lambda^s f\|_{L^2(\mathbb{R}^2)} \|g\|_{L^\infty(\mathbb{R}^2)} + \|\nabla f\|_{L^\infty(\mathbb{R}^2)} \|\Lambda^{s-1}g\|_{L^2(\mathbb{R}^2)} \right);$
			\item[(ii)] $ \|[\Lambda^s, f]g\|_{L^2(\mathbb{R}^2)} \leq C \|\nabla f\|_{H^{q_0}(\mathbb{R}^2)} \|g\|_{H^{s-1}(\mathbb{R}^2)}, ~ \forall q_0 > 1, \, 0 \leq s \leq q_0 + 1,$
		\end{enumerate}
		where all the constants \( C \) are independent of \( f \) and \( g \).
	\end{lemma}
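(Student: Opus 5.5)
We prove both estimates with Bony's paraproduct decomposition and Littlewood--Paley theory on $\mathbb{R}^2$. Let $\Delta_k$ ($k\ge -1$) be a dyadic frequency decomposition and $S_k=\sum_{j\le k-1}\Delta_j$. Write $fg=T_fg+T_gf+R(f,g)$, where $T_fg=\sum_k S_{k-1}f\,\Delta_kg$ and $R(f,g)=\sum_{|k-k'|\le1}\Delta_kf\,\Delta_{k'}g$. This gives
\begin{equation*}
[\Lambda^s,f]g=[\Lambda^s,T_f]g+\bigl(\Lambda^s T_gf-T_{\Lambda^s g}f\bigr)+\bigl(\Lambda^sR(f,g)-R(f,\Lambda^s g)\bigr),
\end{equation*}
and we estimate the three groups separately. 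By almost orthogonality, $L^2$ norms of the dyadic pieces may be square-summed.

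\emph{Low--high term} $[\Lambda^s,T_f]g$. This is the only genuine commutator, and we expect it to be the main obstacle. Each block $[\Lambda^s,S_{k-1}f]\Delta_kg$ lives at frequency $\sim2^k$. With $\langle\xi\rangle^s$ the symbol of $\Lambda^s$, write $\langle\xi+\eta\rangle^s-\langle\xi\rangle^s=\eta\cdot\int_0^1\nabla\langle\cdot\rangle^s(\xi+\theta\eta)\,d\theta$ for $|\eta|\ll|\xi|\sim2^k$. The block then becomes a bilinear Coifman--Meyer type operator applied to $\nabla S_{k-1}f$ and $2^{k(s-1)}\Delta_kg$. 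Concretely, we write the kernel of $\Lambda^s\widetilde\Delta_k$ as $2^{ks}2^{2k}\check\chi(2^k\cdot)$ and use the mean value theorem in physical space, $f(x)-f(z)=(x-z)\cdot\int_0^1\nabla f(z+\theta(x-z))\,d\theta$. The moment $|x-z|\,2^{2k}|\check\chi(2^k(x-z))|$ has $L^1$ norm $\sim 2^{-k}$. This yields the block bound $\lesssim\|\nabla f\|_{L^\infty}2^{k(s-1)}\|\Delta_kg\|_{L^2}$, and summing in $k$ gives $C\|\nabla f\|_{L^\infty}\|\Lambda^{s-1}g\|_{L^2}$.

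\emph{High--low and remainder terms.} In $\Lambda^sT_gf$ the output frequency is $\sim2^k$, so $\|\Lambda^s(S_{k-1}g\,\Delta_kf)\|_{L^2}\lesssim\|g\|_{L^\infty}2^{ks}\|\Delta_kf\|_{L^2}$. In $T_{\Lambda^sg}f$ we use Bernstein and $s>0$ to get $\|S_{k-1}\Lambda^sg\|_{L^\infty}\lesssim2^{ks}\|g\|_{L^\infty}$, so this piece has the same bound. Both sum to $\|g\|_{L^\infty}\|\Lambda^sf\|_{L^2}$. For $\Lambda^sR(f,g)$ the output frequency is $\lesssim2^k$; since $s>0$ the sum $\sum_{j\le k}2^{js}$ is controlled by $2^{ks}$, again giving $\|g\|_{L^\infty}\|\Lambda^sf\|_{L^2}$. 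For $R(f,\Lambda^sg)$ we shift the factor $2^{ks}$ onto $\Delta_kf$, which yields the same bound. This proves (i).

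\emph{Proof of (ii).} We reuse the same decomposition, distributing norms differently. The low--high term is bounded as above by $\|\nabla f\|_{L^\infty}\|g\|_{H^{s-1}}$, and $\|\nabla f\|_{L^\infty}\lesssim\|\nabla f\|_{H^{q_0}}$ because $q_0>1$ is the Sobolev embedding threshold in two dimensions. In the high--low and remainder terms we write $2^{ks}\|\Delta_kf\|_{L^2}\sim2^{k(s-1)}\|\Delta_k\nabla f\|_{L^2}$ and pair this with $S_{k-1}g$ measured in $L^\infty$ via Bernstein. This requires $2^{k(s-1)}\|\Delta_k\nabla f\|_{L^2}\,\|S_{k-1}g\|_{L^\infty}$ to be summable against $\|\nabla f\|_{H^{q_0}}\|g\|_{H^{s-1}}$. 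When $s-1\le q_0$, the factor $2^{k(s-1)}$ is absorbed by the $H^{q_0}$ regularity of $\nabla f$. The low-frequency part $\|S_{k-1}g\|_{L^\infty}\lesssim\sum_{j<k}2^{j}\|\Delta_jg\|_{L^2}$ is then controlled by $2^{k(2-s)_+}\|g\|_{H^{s-1}}$ (with a logarithmic loss at the endpoint). This loss is compensated by the spare derivatives $q_0-(s-1)\ge0$ together with the strict inequality $q_0>1$. This is exactly where the constraint $0\le s\le q_0+1$ enters, and we expect this bookkeeping of exponents near the endpoints to be the delicate point of (ii).
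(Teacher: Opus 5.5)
The paper gives no proof of this lemma; it quotes it from Kato--Ponce, whose argument relies on the Coifman--Meyer theory of bilinear multipliers. Your paraproduct scheme handles four pieces of (i) correctly: the low--high commutator (via the first moment of the kernel of $\Lambda^s\widetilde\Delta_k$), $\Lambda^sT_gf$, $T_{\Lambda^sg}f$ and $\Lambda^sR(f,g)$.

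There is, however, a genuine gap in (i) at the resonant term $R(f,\Lambda^sg)=\sum_k\Delta_kf\,\widetilde\Delta_k\Lambda^sg$. Each piece has Fourier support in a \emph{ball} $\{|\zeta|\lesssim 2^k\}$, not an annulus, so your opening claim that dyadic $L^2$ norms may be square-summed does not apply. Unlike $\Lambda^sR(f,g)$, there is also no factor $2^{js}$ on the output block $\Delta_j$ ($j\le k+N$) to produce the decay $2^{(j-k)s}$ that made your Schur-type summation work. Shifting $2^{ks}$ onto $\Delta_kf$ only gives per-piece bounds $2^{ks}\|\Delta_kf\|_{L^2}\|g\|_{L^\infty}$. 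Their sum is $\|g\|_{L^\infty}\|f\|_{B^s_{2,1}}$, not $\|g\|_{L^\infty}\|\Lambda^sf\|_{L^2}$.

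This is not a bookkeeping issue. Your argument uses $g$ only through $\sup_k\|\Delta_kg\|_{L^\infty}$, and in that form the bound is false. Take $\Delta_kg\approx\cos(\tfrac32 2^kx_1)$ and $\Delta_kf\approx k^{-1}2^{-ks}\cos(\tfrac32 2^kx_1)\chi(x)$, with $\widehat\chi$ supported near $0$. Then $f\in H^s$, but the resonant sum contains the non-oscillating part $c\,\chi\sum_k k^{-1}=\infty$ with $c>0$. Trading derivatives to use $\|\nabla f\|_{L^\infty}\|\Lambda^{s-1}g\|_{L^2}$ instead leads to the same resonant sum with the roles of $L^2$ and $L^\infty$ swapped.

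The missing ingredient is a genuine bilinear estimate that exploits $g\in L^\infty\subset BMO$. Set $F=\Lambda^sf$, $P_k=2^{ks}\Lambda^{-s}\Delta_k$ and $Q_k=2^{-ks}\Lambda^s\widetilde\Delta_k$; these are uniformly of Littlewood--Paley type. You need $\|\sum_kP_kF\,Q_kg\|_{L^2}\lesssim\|F\|_{L^2}\|g\|_{L^\infty}$. This is the $L^2\times L^\infty\to L^2$ case of Coifman--Meyer. Alternatively, dualize against $h\in L^2$: only $S_{k+N}h$ meets the $k$-th piece, and Cauchy--Schwarz reduces matters to $\sum_k\int|Q_kg|^2|S_{k+N}h|^2\,dx\lesssim\|g\|_{BMO}^2\|h\|_{L^2}^2$, which is Carleson's embedding theorem.

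Part (ii) does not have this problem, because both inputs are $L^2$-based. By Cauchy--Schwarz in $k$,
$\sum_k\|\Delta_kf\|_{L^2}\|\widetilde\Delta_k\Lambda^sg\|_{L^\infty}\lesssim\sum_k 2^k\|\Delta_k\nabla f\|_{L^2}\,2^{k(s-1)}\|\widetilde\Delta_kg\|_{L^2}\le\|\nabla f\|_{H^1}\|g\|_{H^{s-1}}$.
Three details in your outline of (ii) need fixing:
\begin{enumerate}
\item[(a)] In the remainder terms the $g$-factor is $\widetilde\Delta_kg$, not $S_{k-1}g$.
\item[(b)] You should keep the $\ell^2$ sequence $2^{k(s-1)}\|\widetilde\Delta_kg\|_{L^2}$ rather than bounding it by $\|g\|_{H^{s-1}}$; otherwise the endpoint $s=q_0+1$ fails.
\item[(c)] The $k=-1$ resonant piece must be kept in commutator form, since $\|\nabla f\|_{H^{q_0}}$ does not control $\|\Delta_{-1}f\|_{L^2}$.
\end{enumerate}
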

	\begin{lemma}[\cite{gui-liu-luo-yin-21}]   \label{lemma-local well-posedness-3}
		For $s\geq 2$, the space $X^s(\mathbb{R}^2)$ is continuously embedded in $Lip(\mathbb{R}^2)$.
	\end{lemma}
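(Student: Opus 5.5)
We need to prove Lemma \ref{lemma-local well-posedness-3}: for $s\ge2$, $X^s(\mathbb{R}^2)\hookrightarrow Lip(\mathbb{R}^2)$. In other words, we must bound $\|u\|_{L^\infty}+\|\nabla u\|_{L^\infty}\le C\|u\|_{X^s}$. Since $\|u\|_{X^s}$ is increasing in $s$, it suffices to treat $s=2$.

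\emph{The terms $u$ and $u_x$.} These are immediate from the Sobolev embedding $H^2(\mathbb{R}^2)\hookrightarrow L^\infty(\mathbb{R}^2)$. It gives $\|u\|_{L^\infty}\le C\|u\|_{H^2}$ and $\|u_x\|_{L^\infty}\le C\|u_x\|_{H^2}$.

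\emph{The term $u_y$.} This is the only delicate point, because $u\in H^2$ alone only gives $u_y\in H^1$, which just fails to embed into $L^\infty$ in two dimensions. I will use the Fourier inversion bound
\[
\|u_y\|_{L^\infty}\le C\int_{\mathbb{R}^2}|\eta|\,|\hat u(\xi,\eta)|\,d\xi d\eta .
\]
The $X^2$ norm is equivalent to the weight $(1+\xi^2)(1+\xi^2+\eta^2)^2$ on $|\hat u|^2$. By Cauchy--Schwarz it is therefore enough to check that
\[
I=\int_{\mathbb{R}^2}\frac{\eta^2\,d\xi\,d\eta}{(1+\xi^2)(1+\xi^2+\eta^2)^2}<\infty .
\]
Since $\eta^2\le 1+\xi^2+\eta^2$, we have
\[
I\le\int_{\mathbb{R}^2}\frac{d\xi\,d\eta}{(1+\xi^2)(1+\xi^2+\eta^2)} .
\]
Integrating in $\eta$ first gives $\int_{\mathbb{R}}\frac{d\eta}{a^2+\eta^2}=\pi/a$ with $a=\sqrt{1+\xi^2}$. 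The remaining integral is $\pi\int_{\mathbb{R}}(1+\xi^2)^{-3/2}\,d\xi<\infty$. This computation is the main (though mild) obstacle: the extra factor $(1+\xi^2)$ supplied by $u_x\in H^2$ is exactly what restores integrability.

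\emph{Conclusion.} Combining the three bounds gives $\|u\|_{W^{1,\infty}}\le C\|u\|_{X^2}\le C\|u\|_{X^s}$. By the mean value theorem, a $W^{1,\infty}$ function (continuous, since $\hat u\in L^1$) is Lipschitz. This proves the continuous embedding.
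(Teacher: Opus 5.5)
Your proof is correct and complete. The paper gives no proof of this lemma; it quotes it from Gui--Liu--Luo--Yin, so there is no in-paper argument to match.

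Your Fourier argument is a clean, self-contained route. Writing $\|u\|_{X^2}^2$ as a weighted $L^2$ norm of $\hat u$ with weight $(1+\xi^2)(1+\xi^2+\eta^2)^2$, and then evaluating $I$ explicitly, shows exactly where the extra factor $(1+\xi^2)$ from $u_x\in H^2$ is used. The same bookkeeping gives $\hat u,\ \xi\hat u,\ \eta\hat u\in L^1$, so $u\in C^1$ with bounded gradient and the mean value step is legitimate. It also shows the threshold: redoing your computation with the weight $(1+\xi^2)(1+\xi^2+\eta^2)^s$ gives the embedding into $W^{1,\infty}$ already for every $s>3/2$.

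A physical-space alternative, closer to the tools the paper uses in Lemma \ref{Sobolev embedding}, is the anisotropic Agmon inequality
\[
\|f\|_{L^\infty}^2\le \|f\|_{L^2}^{1/2}\|f_x\|_{L^2}^{1/2}\|f_y\|_{L^2}^{1/2}\|f_{xy}\|_{L^2}^{1/2}.
\]
It follows from the one-dimensional bound $\sup|g|^2\le\|g\|_{L^2}\|g'\|_{L^2}$ in $x$, combined with $\sup_y\|f(\cdot,y)\|_{L^2_x}^2\le\|f\|_{L^2}\|f_y\|_{L^2}$ and the same bound for $f_x$ in $y$. Taking $f=u_y$ needs only $u_y,u_{xy},u_{yy},u_{xyy}\in L^2$, all bounded by $\|u\|_{X^2}$.

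That route gives a multiplicative estimate of the kind used in the energy estimates of Section 2. Yours gives the additive bound directly and makes the role of the anisotropic weight, and the regularity threshold, transparent.
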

	\begin{lemma}[\cite{gui-liu-luo-yin-21}] 
		For all $u\in X^2(\mathbb{R}^2)$, we have
		\begin{equation}\label{remark-lemma-local well-posedness-3}
			\|u\|_{L^{\infty}}\leq C(\|u\|_{L^2}+\|u_x\|_{L^2}+\|u_y\|_{L^{\infty}}).
		\end{equation}
	\end{lemma}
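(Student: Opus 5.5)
We need to prove the last lemma: for $u\in X^2(\mathbb{R}^2)$,
$$\|u\|_{L^\infty}\le C\bigl(\|u\|_{L^2}+\|u_x\|_{L^2}+\|u_y\|_{L^\infty}\bigr).$$
The plan is to split variables, controlling the $x$-direction through the one-dimensional embedding $H^1(\mathbb{R})\subset L^\infty(\mathbb{R})$ and the $y$-direction through the Lipschitz bound in $y$.

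By density we first take $u\in C_c^\infty(\mathbb{R}^2)$, or argue directly using Lemma \ref{lemma-local well-posedness-3}, which makes $u$ Lipschitz and continuous. Fix a point $(x_0,y_0)$. For each $y$, the one-dimensional Gagliardo--Nirenberg (Agmon) inequality gives
$$|u(x_0,y)|^2\le \|u(\cdot,y)\|_{L^2_x}\,\|u_x(\cdot,y)\|_{L^2_x}\le \tfrac12\bigl(\|u(\cdot,y)\|_{L^2_x}^2+\|u_x(\cdot,y)\|_{L^2_x}^2\bigr).$$
Next, the mean value theorem in $y$ gives, for $|y-y_0|\le 1$,
$$|u(x_0,y_0)|\le |u(x_0,y)|+\|u_y\|_{L^\infty}.$$
We square this, average over $y\in[y_0-1,y_0+1]$, and insert the previous bound:
$$|u(x_0,y_0)|^2\le 2\cdot\tfrac12\int_{y_0-1}^{y_0+1}\tfrac12\bigl(\|u(\cdot,y)\|^2_{L^2_x}+\|u_x(\cdot,y)\|^2_{L^2_x}\bigr)\,dy+2\|u_y\|_{L^\infty}^2\le \tfrac12\bigl(\|u\|_{L^2}^2+\|u_x\|_{L^2}^2\bigr)+2\|u_y\|_{L^\infty}^2.$$
Taking square roots and the supremum over $(x_0,y_0)$ yields the claim with an absolute constant $C$.

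The only subtle point is justifying the pointwise slice arguments for a general $u\in X^2$. Fubini makes $u(\cdot,y),u_x(\cdot,y)\in L^2_x$ for a.e. $y$, and the continuity of $u$ from Lemma \ref{lemma-local well-posedness-3} lets us evaluate pointwise. Alternatively, we prove the inequality for smooth compactly supported functions and pass to the limit. The $L^\infty$ norm of $u_y$ is not continuous under $X^2$-approximation, but mollification does not increase it and converges in $X^2$, hence uniformly. This approximation step is the main, and still minor, obstacle.
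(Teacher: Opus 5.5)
Your proof is correct and complete. The one-dimensional bound $|u(x_0,y)|^2\le \|u(\cdot,y)\|_{L^2_x}\|u_x(\cdot,y)\|_{L^2_x}$, the Lipschitz step in $y$, and the averaging over $[y_0-1,y_0+1]$ combine exactly as you write. They give $|u(x_0,y_0)|^2\le \frac12\bigl(\|u\|_{L^2}^2+\|u_x\|_{L^2}^2\bigr)+2\|u_y\|_{L^\infty}^2$, hence the lemma with $C=\sqrt{2}$.

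The paper does not prove this lemma; it quotes it from \cite{gui-liu-luo-yin-21}. So there is no in-text argument to compare with, and your slicing argument supplies a self-contained proof. It uses the same identity $u^2(x,y)=2\int_{-\infty}^x u u_x\,dx'$ that the paper uses in Lemma \ref{Sobolev embedding}.

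Your argument also shows that membership in $X^2$ is used only to ensure that $u$ is continuous and that $\|u_y\|_{L^\infty}<\infty$ (Lemma \ref{lemma-local well-posedness-3}). The inequality holds for any continuous $u$ with $u,u_x\in L^2(\mathbb{R}^2)$ and $u_y\in L^\infty(\mathbb{R}^2)$. This is exactly the form in which the paper later uses it, together with conservation of $E(u)$, in the blow-up criterion.

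The approximation step you call the main obstacle can be dropped. Since $X^2\subset H^2(\mathbb{R}^2)\subset C(\mathbb{R}^2)$, Fubini gives $u(\cdot,y)\in H^1(\mathbb{R})$ with weak derivative $u_x(\cdot,y)$ for a.e. $y$. That is all the slice bound needs once you integrate in $y$. Likewise, for a.e. $x_0$ the function $u(x_0,\cdot)$ is absolutely continuous with $|\partial_y u(x_0,\cdot)|\le\|u_y\|_{L^\infty}$ a.e. This gives the Lipschitz step for a.e. $x_0$, which already suffices for the $L^\infty$ bound, and for every $x_0$ by continuity.
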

	\begin{lemma}\label{Sobolev embedding}
		Let $ u, \, v \in H^1(\mathbb{R}^2)$, the following  inequality holds  true
		\begin{align}
			\Vert uv \Vert_{L^2(\mathbb{R}^2)} \leq \Vert u \Vert_{L^2(\mathbb{R}^2)}^{\frac{1}{2}} \Vert u_x \Vert_{L^2(\mathbb{R}^2)}^{\frac{1}{2}}\Vert v \Vert_{L^2(\mathbb{R}^2)}^{\frac{1}{2}}
			\Vert v_y \Vert_{L^2(\mathbb{R}^2)}^{\frac{1}{2}}. 
			\label{Sobolev embedding remark-2 eq}
		\end{align}
	\end{lemma}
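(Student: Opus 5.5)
The inequality is anisotropic: $u$ contributes only its $x$-derivative, $v$ only its $y$-derivative. The plan is to reduce it to one-dimensional Gagliardo--Nirenberg (Agmon) bounds, taking $L^\infty$ in $x$ for $u$ and in $y$ for $v$. By density of $C_c^\infty(\mathbb{R}^2)$ in $H^1(\mathbb{R}^2)$, and since both sides are continuous in the $H^1$ topology, we may assume $u,v$ are smooth with compact support.

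First, fix $y$ and apply the elementary one-dimensional estimate
\[
\sup_{x}|u(x,y)|^2\le \int_{\mathbb{R}}|u(x,y)||u_x(x,y)|\,dx .
\]
This follows from writing $u^2(x,y)=\int_{-\infty}^x \partial_s(u^2)\,ds$ and $u^2(x,y)=-\int_x^{\infty}\partial_s(u^2)\,ds$, then averaging the two expressions. In the same way, for fixed $x$,
\[
\sup_y|v(x,y)|^2\le \int_{\mathbb{R}}|v(x,y)||v_y(x,y)|\,dy .
\]
Set $a(y):=\sup_x|u(x,y)|^2$ and $b(x):=\sup_y|v(x,y)|^2$.

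Next, estimate the product pointwise by $|u(x,y)|^2|v(x,y)|^2\le a(y)\,b(x)$ and integrate over $\mathbb{R}^2$. The integral factorizes:
\[
\|uv\|_{L^2}^2\le \Big(\int_{\mathbb{R}} a(y)\,dy\Big)\Big(\int_{\mathbb{R}} b(x)\,dx\Big)\le \Big(\iint |u||u_x|\Big)\Big(\iint |v||v_y|\Big).
\]
Applying Cauchy--Schwarz to each double integral gives $\|u\|_{L^2}\|u_x\|_{L^2}\|v\|_{L^2}\|v_y\|_{L^2}$. Taking square roots yields \eqref{Sobolev embedding remark-2 eq} with constant $1$.

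The only delicate point is the constant. The naive bound $u^2\le 2\int|u||u_x|$, obtained from integrating over a half-line only, would give a constant $\sqrt2$. Averaging the two half-line representations removes this factor, and that is why the statement holds with constant exactly $1$. Measurability of the suprema is harmless for smooth compactly supported functions, and the density argument then extends the inequality to all $u,v\in H^1(\mathbb{R}^2)$, with Fatou's lemma on the left-hand side if needed.
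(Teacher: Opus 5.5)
Your proof is correct and follows essentially the paper's route: a one-dimensional fundamental-theorem-of-calculus bound for $u$ in $x$ and for $v$ in $y$, combined with a H\"older/Fubini splitting. Your factorization $\|uv\|_{L^2}^2\le\int a(y)\,dy\int b(x)\,dx$, versus the paper's $\|uv\|_{L^2}\le\|u\|_{L^\infty_xL^2_y}\|v\|_{L^2_xL^\infty_y}$, is an inessential difference.

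Your averaging of the two half-line representations is in fact what gives the stated constant $1$. The paper's one-sided identity $u^2=2\int_{-\infty}^x uu_x\,dx'$, applied to both $u$ and $v$, only gives the constant $2$. For the same reason, your side remark should say $2$ rather than $\sqrt2$, since each of the two factors loses $\sqrt2$.
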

	\begin{proof}
		For all $u, \, v\in H^1(\mathbb{R}^2)$, the following estimate  holds:
		\begin{align}
			\Vert uv \Vert_{L^2} \leq \Vert u \Vert_{L_x^{\infty} L_y^2} \Vert v \Vert_{L_x^2 L_y^{\infty}}. 
			\label{Sobolev embedding remark-2 proof eq}
		\end{align}
		Thanks to $u , \, v\in H^1(\mathbb{R}^2)$, we get  \[ u^2(x,y) = 2 \int_{-\infty}^x u(x',y)u_x (x' ,y) dx', \quad v^2(x,y) =2 \int_{-\infty}^y v (x,y') v_y(x,y') dy'. \] 
		Thus,  by the Cauchy-Schwartz inequality, we have 
		\begin{align}
			\Vert u \Vert_{L_x^{\infty}}^2 \leq 2  \Vert u \Vert_{L_x^2} \Vert u_x \Vert_{L_x^2},  \quad 
			\Vert v\Vert_{L_x^2}^2  \leq  2 \Vert v \Vert_{L^2_{x,y} } \Vert v_y \Vert_{L_{x,y}^2},\nonumber 
		\end{align}
		which along with (\ref{Sobolev embedding remark-2 proof eq})   completes the proof of Lemma \ref{Sobolev embedding}.
	\end{proof}
	\begin{proof}[Proof of Theorem \ref{local ZK}]
		Let us firstly derive some necessary a priori estimates. For this, applying the operator \(\Lambda^s\) to both sides of (\ref{ZK equation (1)}), one gets
		\begin{equation}\label{ZK equation (3)}
			\partial_t\Lambda^su-\partial_t\Lambda^su_{xx}+3\Lambda^suu_{x}-2\Lambda^su_{x}u_{xx}-\Lambda^suu_{xxx}+\Lambda^su_{xyy}+\kappa\Lambda^su_x=0.
		\end{equation}
		Note that
		$$
		\int_{\mathbb{R}^2}\Lambda^suu_{xxx}\Lambda^su~dxdy =\int_{\mathbb{R}^2}\Lambda^s\partial_{x}\left(uu_{xx}-\frac{1}{2}(u_{x})^2\right)\Lambda^su~dxdy.
		$$
		Multiplying (\ref{ZK equation (3)}) by \(\Lambda^s u\) and integrating it over $\mathbb{R}^2$, one infers
		\begin{equation}\label{ZK equation (5)^*}
			\begin{split}
				\frac{1}{2}\frac{d}{dt}\left(\Vert\Lambda^s\partial_{x}u\Vert_{L^2}^2+\Vert\Lambda^su\Vert_{L^2}^2\right)=\int_{\mathbb{R}^2}-\left(\Lambda^su_{xyy}+\kappa\Lambda^su_x \right)\Lambda^su~dxdy
				\\
				+\int_{\mathbb{R}^2}\left(-\Lambda^suu_{xx}-\frac{1}{2}\Lambda^s(u_{x})^2+\frac{3}{2}\Lambda^su^2 \right)\Lambda^s\partial_{x}u~dxdy .
			\end{split}
		\end{equation}
		Thanks to integration by parts, we have
		\[
		\left|-\int_{\mathbb{R}^2}\left(\Lambda^su_{xyy}+\kappa\Lambda^su_x\right)\Lambda^su\,dxdy\right|=0.
		\]
		For the remaining terms, by the Cauchy-Schwarz inequality and Lemma \ref{lemma-local well-posedness-1}, one can see
		\begin{equation}\label{lwp-1}
			\left| \int_{\mathbb{R}^2} \Lambda^s(u_{x})^2  \cdot\partial_{x} \Lambda^s u ~dx \right| \leq C \| u_{x}^2 \|_{H^s} \| \partial_{x} u \|_{H^s}\leq C \|u_{x}\|_{L^{\infty}}\| u_{x} \|_{H^s} \| \partial_{x}  u \|_{H^s},
		\end{equation}
		and
		\begin{equation}\label{lwp-2}
			\left| \int_{\mathbb{R}^2} \Lambda^s(u^2)  \cdot\partial_{x} \Lambda^s u ~dx \right| \leq C \| u^2 \|_{H^s} \| \partial_{x}  u \|_{H^s}\leq C \|u\|_{L^{\infty}}\| u \|_{H^s} \| \partial_{x}  u \|_{H^s}.
		\end{equation}
		Simultaneously, in view of Lemma \ref{lemma-local well-posedness-2}, we deduce
		\begin{align}
			\left|-\int_{\mathbb{R}^2}\Lambda^suu_{xx}\cdot\Lambda^su_{x}~dxdy\right|
			&=\left|\frac{1}{2}\int_{\mathbb{R}^2}u_{x}(\Lambda^su_{x})^2~dxdy-\int_{\mathbb{R}^2}\left[\Lambda^s,u\right]u_{xx}\cdot\Lambda^su_{x}~dxdy\right|\nonumber\\
			&\leq C\left(\|u_{x}\|_{L^{\infty}} \|u_{x}\|_{H^s}^2+\|\left[\Lambda^s,u\right]u_{xx}\|_{L^2}\|u_{x}\|_{H^s}\right)\nonumber\\
			&\leq C\left(\|u_{x}\|_{L^{\infty}} \|u_{x}\|_{H^s}+\|\nabla u\|_{L^{\infty}} \|u_{xx}\|_{H^{s-1}}+\|u_{xx}\|_{L^{\infty}} \|u\|_{H^s}\right)\|u_{x}\|_{H^s}.\label{lwp-3}
		\end{align}
		Plugging (\ref{lwp-1}), (\ref{lwp-2}) and (\ref{lwp-3}) into (\ref{ZK equation (5)^*}) gives rise to
		\begin{equation}\label{xian yan gu ji 1}
			\frac{d}{dt} \left( \| \partial_{x}u \|_{H^s}^2 + \| u \|_{H^s}^2 \right) \leq C\|u_{x}\|_{H^s}
			\left(\|u \|_{L^{\infty}}\|u \|_{H^s}+\| \nabla u\|_{L^{\infty}}\|u_{x} \|_{H^{s}}+\| u_{xx}\|_{L^{\infty}}\|u \|_{H^s}\right).
		\end{equation}
		By means of the Sobolev imbedding inequality, we have
		\begin{equation}\label{xian yan gu ji jie guo 1}
			\frac{d}{dt} \left( \| \partial_{x}u \|_{H^s}^2 + \| u \|_{H^s}^2 \right) \leq C \|u\|_{X^s}^3,~\text{when}~ s> 2.
		\end{equation}
		
		For the case $s=2$, we act the operators $\nabla$, $ \nabla \partial_x$ and $\nabla \partial_x^2$ to (\ref{ZK equation (2)}) respectively, which gives rise to 
		\begin{align}
			&\partial_t \nabla u   + u_x \nabla u  + u \nabla u_x +G\ast(\nabla u_{xyy}+\kappa\nabla u_x)+G_x\ast(2u\nabla u+u_x\nabla u_x) =0 , \label{case s=2 eq-1} \\
			& \partial_t \nabla u_x  + u_{xx} \nabla u + u_x \nabla u_x + u\nabla u_{xx}+G_x\ast(\nabla u_{xyy}+\kappa\nabla u_x)  \nonumber \\
			&\quad\quad\quad\quad\quad\quad\quad\quad\quad+G\ast(2u\cdotp\nabla u+u_x\nabla u_x)  -2u\nabla u =0 , \label{case s=2 eq-2} \\
			& \partial_t \nabla u_{xx} + u_{xxx} \nabla u + 2 u_{xx} \nabla u_x + 2 u_x \nabla u_{xx} + u \nabla u_{xxx} +G_{xx}\ast(\nabla u_{xyy}+\kappa\nabla u_x)   \nonumber \\
			&\quad\quad\quad\quad\quad\quad\quad\quad\quad+G_x\ast(2u\nabla u+u_x\nabla u_x) -2u_x\nabla u-2u\nabla u_x =0 . \label{case s=2 eq-3} 
		\end{align}
		Taking the $L^2$ inner-product between three equations in (\ref{case s=2 eq-1})-(\ref{case s=2 eq-3}) and $\nabla u$, $\nabla u_x$, $\nabla u_{xx}$ respectively, and summating them, we get
		\begin{align}
			&\frac{1}{2}\frac{d}{dt}(\|\nabla u\|^2_{L^2}+\|\nabla u_x\|^2_{L^2})=I_1+I_3+I_4+I_6,\label{case s=2 fanshuguji-a}\\
			&\frac{1}{2}\frac{d}{dt}(\|\nabla u\|^2_{L^2}+\|\nabla u_x\|^2_{L^2}+\|\nabla u_{xx}\|^2_{L^2})=\sum_{i=1}^{7}I_i,\label{case s=2 fanshuguji-a^*}
		\end{align}
		where
		\begin{align*}
			&I_1:=-\int_{\mathbb{R}^2}(u\cdotp\nabla u_x\cdotp\nabla u+u\cdotp\nabla u_{xx}\cdotp\nabla u_x)~dxdy,\\
			&I_2:=-\int_{\mathbb{R}^2}u\cdotp\nabla u_{xxx}\cdotp\nabla u_{xx}~dxdy,\\
			&I_3:=-\int_{\mathbb{R}^2}\Bigl(u_x\nabla u+G_x\ast(2u\nabla u+u_x\nabla u_x)\Bigr)\nabla u~dxdy,\\
			&I_4:=-\int_{\mathbb{R}^2}\Bigl(u_x\nabla u_x+u_{xx}\nabla u-2u\nabla u+G\ast(2u\nabla u+u_x\nabla u_x)\Bigr)\nabla u~dxdy,\\
			&I_5:=-\int_{\mathbb{R}^2}\Bigl(2u_x\nabla u_{xx}+2u_{xx}\nabla u_x+u_{xxx}\nabla u-2u\nabla u_x-2u_{x}\nabla u\\
			&\quad\quad\quad\quad\quad\quad+G_x\ast (2u\nabla u+u_x\nabla u_x)\Bigr)\nabla u_{xx}~dxdy,\\
			&I_6:=-\int_{\mathbb{R}^2}G\ast(\nabla u_{xyy}+\kappa\nabla u_x)\cdot \nabla u-G_x\ast(\nabla u_{xyy}+\kappa\nabla u_x)\cdot \nabla u_x~dxdy,\\
			&I_7:=-\int_{\mathbb{R}^2}G_{xx}\ast(\nabla u_{xyy}+\kappa\nabla u_x)\cdotp\nabla u_{xx}~dxdy.
		\end{align*}
		Thanks to integration by parts,we get $I_6=0,I_7=0$,and
		\begin{align}
			&|I_1|=\left|\frac{1}{2}\int_{\mathbb{R}^2}u_x(|\nabla u|^2+|\nabla u_x|^2)dxdy\right|\leq C\|u_x\|_{L^{\infty}}(\|\nabla u\|_{L^2}^2+\|\nabla u_x\|_{L^2}^2),\label{case s=2 fanshuguji-I1}\\
			&|I_2|=\left|\frac{1}{2}\int_{\mathbb{R}^2}u_x|\nabla u_{xx}|^2dxdy\right|
			\leq C\|u_x\|_{L^{\infty}}\|\nabla u_{xx}\|_{L^2}^2.\label{case s=2 fanshuguji-I2}
		\end{align}
		It is noticed that
		\begin{align}
			|I_3|&\leq C(\|\nabla u\|_{L^{\infty}}\|u_x\|_{L^2}+\|2u\nabla u+u_x\nabla u_x\|_{L^2})\|\nabla u\|_{L^2}\nonumber\\
			&\leq C\|\nabla u\|_{L^{\infty}}(\|\nabla u\|_{L^2}^2+\|\nabla u_x\|_{L^2}^2),\label{case s=2 fanshuguji-I3}
		\end{align}
		\begin{align}
			|I_4|&\leq C(\|u_x\|_{L^{\infty}}\|\nabla u_x\|_{L^2}+\|u_{xx}\|_{L^2}\|\nabla u\|_{L^{\infty}}+\|u\|_{L^2}\|\nabla u\|_{L^{\infty}})\|\nabla u_x\|_{L^2}\nonumber\\
			&\leq C\|\nabla u\|_{L^{\infty}}(\|u\|_{L^2}^2+\|\nabla u_x\|_{L^2}^2),\label{case s=2 fanshuguji-I4}
		\end{align}
		and
		\begin{align}
			|I_5|&\leq C(\|u_x\|_{L^{\infty}}\|\nabla u_{xx}\|_{L^2}+\|u_{xx}\|_{L^2_xL^{\infty}_y}\|\nabla u_x\|_{L^{\infty}_xL^2_y}+\|u_{xxx}\|_{L^2}\|\nabla u\|_{L^{\infty}} \nonumber\\
			&+\|u\|_{L^{\infty}}\|\nabla u_x\|_{L^2}+\|u_x\|_{L^2}\|\nabla u\|_{L^{\infty}}+\|u_x\|_{L^{\infty}}\|\nabla u_x\|_{L^2})\|\nabla u_{xx}\|_{L^2}.\label{case s=2 fanshuguji-I5}
		\end{align}
		Thanks to the Lemma \ref{Sobolev embedding}, we have
		$$
		\|u_{xx}\|_{L^2_xL^{\infty}_y}\|\nabla u_x\|_{L^{\infty}_xL^2_y}\leq C\|u_{xx}\|_{L^2}^{\frac{1}{2}}\|u_{xxy}\|_{L^2}^{\frac{1}{2}}\|\nabla u_x\|_{L^2}^{\frac{1}{2}}\|\nabla u_{xx}\|_{L^2}^{\frac{1}{2}},
		$$
		which along with (\ref{case s=2 fanshuguji-I5}) implies
		\begin{equation}\label{case s=2 fanshuguji-I5^*}
			|I_5|\leq C(\|u\|_{L^{\infty}}+\|\nabla u\|_{L^{\infty}}+\|\nabla u_x\|_{L^2})(\|\nabla u_{xx}\|^2_{L^2}+\|\nabla u_{x}\|^2_{L^2}+\| u_{x}\|^2_{L^2}).
		\end{equation}
		Hence, we obtain
		\begin{equation}\label{case s=2 fanshuguji-a-jieguo}
			\frac{d}{dt}(\|\nabla u\|^2_{L^2}+\|\nabla u_x\|^2_{L^2})\leq C\|\nabla u\|_{L^{\infty}}(\|\nabla u\|^2_{L^2}+\|\nabla u_x\|^2_{L^2}),
		\end{equation}
		and
		\begin{align}
			&\frac{d}{dt}(\|\nabla u\|^2_{L^2}+\|\nabla u_x\|^2_{L^2}+\|\nabla u_{xx}\|^2_{L^2})\nonumber\\
			&\leq C(\|u\|_{L^{\infty}}+\|\nabla u\|_{L^{\infty}}+\|\nabla u_x\|_{L^2})(\|\nabla u_{xx}\|^2_{L^2}+\|\nabla u_{x}\|^2_{L^2}+\| u_{x}\|^2_{L^2}).\label{case s=2 fanshuguji-a^*-jieguo}
		\end{align}
		While acting the operators $\partial^2_y$ and $\partial_x\partial^2_y$ to  (\ref{ZK equation (2)}) respectively gives rise to
		\begin{align}
			& \partial_tu_{yy}+uu_{xyy}+2u_yu_{xy}+u_xu_{yy}+G\ast(u_{xyyyy}+\kappa u_{xyy}) \label{case s=2 eq-4} \\
			&\quad\quad\quad+G_x\ast(2uu_{yy}+2u^2_y+u_xu_{xyy}+u^2_{xy})=0,\nonumber\\
			&\partial_tu_{xyy}+uu_{xxyy}+u_xu_{xyy}+u_{xy}^2+2u_yu_{xxy}+u_{xx}u_{yy}+G_x\ast(u_{xyyyy}+\kappa u_{xyy}) \label{case s=2 eq-5} \\
			&\quad\quad\quad+G\ast(2uu_{yy}+2u^2_y+u_xu_{xyy}+u^2_{xy})-2uu_{yy}-2u^2_y=0.\nonumber
		\end{align}
		Taking the $L^2$ inner-products between two equations in (\ref{case s=2 eq-4})-(\ref{case s=2 eq-5}) and $u_{yy}$, $u_{xyy}$ respectively, and summating them, we get
		\begin{equation}\label{case s=2 fanshuguji-b}
			\frac{1}{2}\frac{d}{dt}(\|u_{yy}\|^2_{L^2}+\|u_{xyy}\|^2_{L^2})=J_1+J_2+J_3+J_4,
		\end{equation}
		where
		\begin{align*}
			&J_1:=-\int_{\mathbb{R}^2}(uu_{xxyy}u_{xyy}+uu_{xyy}u_{yy})~dxdy,\\
			&J_2:=-\int_{\mathbb{R}^2}\Bigl((u_xu_{xyy}+u^2_{xy}+2u_yu_{xxy}+u_{yy}u_{xx}-2u^2_y-2uu_{yy})u_{xyy}\\
			&\quad\quad\quad\quad\quad+(2u_{y}u_{xy}+u_{x}u_{yy})u_{yy}\Bigl)~dxdy,\\
			&J_3:=-\int_{\mathbb{R}^2}\Bigl(G_x\ast(2uu_{yy}+2u^2_y+u_xu_{xyy}+u^2_{xy})u_{yy}  \\
			&\quad\quad\quad\quad\quad+G\ast(2u^2_y+2uu_{yy}+u^2_{xy}+u_xu_{xyy})u_{xyy}\Bigl)~dxdy,\\
			&J_4:=-\int_{\mathbb{R}^2}G\ast(u_{xyyyy}+\kappa u_{xyy})\cdotp u_{yy}+G_x\ast(u_{xyyyy}+\kappa u_{xyy})\cdotp u_{xyy}  ~dxdy.
		\end{align*}
		Applying intergration by part, one has $J_3=J_4=0$ and
		\begin{align*}
			&J_1=\frac{1}{2}\int_{\mathbb{R}^2}u_x(|u_{xyy}|^2+| u_{yy}|^2)~dxdy,\\
			&J_2=-\int_{\mathbb{R}^2}(u_xu_{xyy}+2u_yu_{xxy}+u_{yy}u_{xx})u_{xyy}+(6u_yu_{xy}+2u_xu_{yy})u_{yy}~dxdy.
		\end{align*}
		It follows that
		\begin{align}
			\left|\sum_{i=1}^{4}J_i\right|&\leq C\Bigl(\|\nabla u\|_{L^{\infty}}(\|u_{xyy}\|^2_{L^2}+\|u_{yy}\|^2_{L^2})\nonumber\\
			&+\|\nabla u\|_{L^{\infty}}\|\nabla u_{xx}\|^2_{L^{2}}+\|u_{xx}u_{yy}\|_{L^{2}}\|u_{xyy}\|_{L^2}
			\Bigl) \label{case s=2 fanshuguji-J1+J2+J3+J4(1)}.
		\end{align}
		Thanks to Lemma \ref{Sobolev embedding}, we have
		$$
		\|u_{yy}\|_{L_x^{\infty}L_y^2} \leq C\|u_{yy}\|_{L^2}^{\frac{1}{2}}\|u_{xyy}\|_{L^2}^{\frac{1}{2}}
		\|u_{xx}\|_{L_x^2L_y^{\infty}} \leq C\|u_{xx}\|_{L^2}^{\frac{1}{2}}\|u_{xxy}\|_{L^2}^{\frac{1}{2}},
		$$
		which implies that
		$$
		\|u_{yy}\|_{L_x^{\infty}L_y^2}\|u_{xx}\|_{L_x^2L_y^{\infty}}\|u_{xyy}\|_{L^2} \leq C(\|u_{xx}\|_{L^2} + \|\nabla u_{xx}\|_{L^2})(\|u_{yy}\|_{L^2}^2 + \|u_{xyy}\|_{L^2}^2).
		$$
		Hence, we obtain
		\begin{equation}\label{case s=2 fanshuguji-J1+J2+J3+J4(2)}
			\left|\sum_{i=1}^{4}J_i\right| \leq C\Bigl((\|\nabla u\|_{L^{\infty}} + \|u_{xx}\|_{L^2} + \|\nabla u_{xx}\|_{L^2})(\|u_{xyy}\|_{L^2}^2 + \|u_{yy}\|_{L^2}^2) + \|\nabla u\|_{L^{\infty}}\|\nabla u_{xx}\|_{L^2}^2\Bigr).
		\end{equation}
		Thus, it follows from (\ref{case s=2 fanshuguji-b}) that
		\begin{align}\label{case s=2 fanshuguji-b-jieguo}
			&\frac{d}{dt}(\|u_{yy}\|_{L^2}^2 + \|u_{xyy}\|_{L^2}^2) \leq C(\|\nabla u\|_{L^\infty}\|\nabla u_{xx}\|^2_{L^2}\nonumber\\ 
			& +( \|\nabla u\|_{L^\infty}+\|u_{xx}\|_{L^2}+\|\nabla u_{xx}\|_{L^2})(\|u_{xyy}\|_{L^2}^2 + \|u_{yy}\|_{L^2}^2).
		\end{align}
		Combining (\ref{ZK equation (2')}), (\ref{xian yan gu ji 1}), (\ref{case s=2 fanshuguji-a^*-jieguo}) with (\ref{case s=2 fanshuguji-b-jieguo}), there appears that
		\begin{align}\label{case s=2 fanshugujijieguo}
			&\frac{d}{dt}\bigl(\|(u,u_x)\|_{H^1}^2 + \|(\nabla u_{xx}, u_{yy}, u_{xyy})\|_{L^2}^2 \bigr) \nonumber\\
			&\leq C\bigl(\|(u,\nabla u)\|_{L^\infty} + \|(\nabla u_{xx}, \nabla u_{x}\|_{L^2} \bigr)\bigl( \|u\|_{H^2}^2 + \|(\nabla u_{xx}, u_{xyy})\|_{L^2}^2\bigr).
		\end{align}
		Since $\Vert u \Vert_{X^2}^2 \leq \Vert (u,u_x ) \Vert_{H^1}^2 + \Vert( \nabla u_{xx} , u_{yy},u_{xyy} ) \Vert_{L^2}^2+  \leq 2\Vert u \Vert_{X^2}^2$, due to Gronwall's inequality, (\ref{xian yan gu ji jie guo 1}) and (\ref{case s=2 fanshugujijieguo}), we have
		\begin{equation}\label{xian yan gu ji jie guo 2}
			\sup\limits_{\tau\in \left[0,t\right]}\|u(\tau)\|^2_{X^s}\leq2\|u_0\|^2_{X^s} e^{C_0\int_{0}^{t}\|u(\tau)\|_{X^s}d\tau},\quad s\geq 2.
		\end{equation}
		Taking \(T>0\) such that \(4C_0\|u_0\|_{X^s}T<1\), we get from the bootstrap argument that 
		\begin{equation}\label{xian yan gu ji jie guo }
			\sup\limits_{\tau\in \left[0,t\right]}\|u(\tau)\|_{X^s}\leq2\|u_0\|_{X^s},\quad \forall t\in (0,T].
		\end{equation}
		It then follows that for some positive constant $C$
		\begin{equation}\label{xian yan gu ji jie guo daoshu}
			\sup\limits_{\tau\in \left[0,t\right]}\|u_t(\tau)\|_{X^{s-2}}\leq C\|u_0\|_{X^{s}},\quad \forall t\in (0,T].
		\end{equation}
		With these a priori estimates, we may use the classical Friedrichs regularization method to construct the approximate solutions to (\ref{ZK equation (2)}), and then apply the compactness argumen to get the local existence of a solution $u\in C([0,T];X^s)\cap C^1([0,T];X^{s-2})$ to equation (\ref{ZK equation (2)}) with estimates (\ref{xian yan gu ji jie guo }) and (\ref{xian yan gu ji jie guo daoshu}).\\
		
		Our attention is now turned to uniqueness of the solution. Assume that $u$ and $v$ are two solutions to (\ref{ZK equation (1)}) with the same initial data, and set \(w := u-v\). From (\ref{ZK equation (1)}), we have
		\begin{equation}\label{wei 1 xing 1}
			\left\{ 
			\begin{aligned}
				&w_t-w_{txx}+R_1+R_2+R_3=0,\\
				& w|_{t=0}=0,
			\end{aligned}
			\right.
		\end{equation}
		where $R_1 := \frac{3}{2} \left(  \left(  u + v \right) w \right)_x $, $R_2 :=-\left( \frac{1}{2} \left(  u_x + v_x \right)w_x + u w_{xx} + w v_{xx}\right)_x$, $R_3 :=\kappa w_x +  w_{xyy}$. 
		Multiplying (\ref{wei 1 xing 1}) by \(w\) and taking the $L^2$-inner product, one has
		\begin{equation}\label{wei 1 xing 2}
			\frac{d}{dt} \frac{1}{2} \left(  \Vert w \Vert_{L^2}^2 + \Vert w_x \Vert_{L^2}^2 \right) = -\int_{\mathbb{R}^2} R_1 w~dxdy 
			- \int_{\mathbb{R}^2} R_2 w~dxdy -\int_{\mathbb{R}^2} R_3 w~dxdy.
		\end{equation}
		Integrating by parts yields 
		$\int_{\mathbb{R}^2} R_3 w~dxdy  =0$. 
		Thanks to the H\"{o}lder inequality, we have 
		\begin{equation}\label{wei 1 xing 3a}
			\left| -\int_{\mathbb{R}^2} R_1 w ~dxdy\right|  =  \left|  \frac{3}{2} \int_{\mathbb{R}^2} (u + v ) w w_x ~dxdy \right|  
			\leq   \frac{3}{2} \Vert (u , v) \Vert_{L^{\infty}} \Vert  w \Vert_{L^2} \Vert w_x \Vert_{L^2},
		\end{equation}
		\begin{align}
			\left|  \int_{\mathbb{R}^2} \left(  u_x + v_x \right)  w_x w_x~dxdy \right| \leq \Vert (u_x , v_x) \Vert_{L^{\infty}} \Vert w_x \Vert_{L^2}^2, 
			\label{wei 1 xing 3b}
		\end{align}
		and 
		\begin{align}
			& 	\left| \int_{\mathbb{R}^2} u w_{xx} w_x  + ww_x v_{xx} ~dxdy  \right|  \nonumber \\
			\leq &\left(  \frac{1}{2} \Vert u_x \Vert_{L^{\infty}} \Vert w_x \Vert_{L^2} + \Vert  v_{xx} w \Vert_{L^2}\right)  \Vert  w_x \Vert_{L^2} \nonumber \\
			\leq & C  \left(  \Vert u_x \Vert_{L^{\infty}} \Vert w_x \Vert_{L^2} + \Vert  v_{xx}   \Vert_{L^2}^{\frac{1}{2}} \Vert v_{xxy} \Vert_{L^2}^{\frac{1}{2}} \Vert w \Vert_{L^2}^{\frac{1}{2}} \Vert w_x \Vert_{L^2}^{\frac{1}{2}}\right)  \Vert  w_x \Vert_{L^2} .
			\label{wei 1 xing 3c}
		\end{align}
		From (\ref{wei 1 xing 3b}) and (\ref{wei 1 xing 3c}), we get
		\begin{align}\label{wei 1 xing 4}
			\left|  \int_{\mathbb{R}^2} R_2 w  ~dxdy \right| \leq \frac{1}{2} \left( 2 \Vert (u, v) \Vert_{L^{\infty}}  + \Vert v_{xx} \Vert_{L^2} + \Vert v_{xxy} \Vert_{L^2} \right)  \left( \Vert w \Vert_{L^2}^2 + \Vert w_x\Vert_{L^2}^2  \right) . 
		\end{align}
		Plugging  (\ref{wei 1 xing 3a}) and (\ref{wei 1 xing 4}) into (\ref{wei 1 xing 2}) gives rise to 
		\begin{align}\label{wei 1 xing 5}
			\frac{d}{dt} \left(  \Vert w \Vert_{L^2}^2 + \Vert w_x \Vert_{L^2}^2 \right) \leq C \Vert (u, v ) \Vert_{X^2} \left(  \Vert  w \Vert_{L^2}^2 + \Vert w_x \Vert_{L^2}^2 \right) . 
		\end{align}  
		Then by Gronwall's inequality, we conclude that \(w=u-v=0\). This ends the proof of the uniqueness. Moreover, thanks to Fatou's lemma and (\ref{xian yan gu ji jie guo }), we may verify that the solution $u$ depends continuously on the initial value $u_0 \in X^s $, and this completes the proof of Theorem \ref{local ZK} . 
	\end{proof}
	\begin{proof}[Proof of Theorem \ref{blow up criterion ZK}]
		We will prove the theorem by considering different cases of  the regular index $s$ $(s\geq 2)$.	To this end, we divide the proof into four steps.\\
		\textbf{Step 1: $s=2$.} Applying Gronwall's inequality to (\ref{case s=2 fanshuguji-a-jieguo}) yields that for all $t \in [0,T^*)$,
		\begin{equation}\label{blow up eq-1}
			\sup_{\tau \in [0,t]} \bigl(\|\nabla u(\tau)\|_{L^2}^2 + \|\nabla u_x(\tau)\|_{L^2}^2\bigr) \leq \bigl(\|\nabla u_0\|_{L^2}^2 + \|\nabla u_{0,x}\|_{L^2}^2\bigr) e^{C_0 \int_0^t \|\nabla u\|_{L^\infty}\,d\tau}.
		\end{equation}
		If $\int_0^{T^*} \|\nabla u\|_{L^\infty}\,d\tau$ is finite, then for all $t \in [0,T^*)$,
		\begin{equation}\label{blow up eq-2}
			\|\nabla u(t)\|_{L^2}^2 + \|\nabla u_x(t)\|_{L^2}^2 \leq M_1(T^*) := \bigl(\|\nabla u_0\|_{L^2}^2 + \|\nabla u_{0,x}\|_{L^2}^2\bigr) e^{C_0 \int_0^{T^*} \|\nabla u\|_{L^\infty}\,d\tau}.
		\end{equation}
		On the other hand, thanks to (\ref{case s=2 fanshuguji-a^*-jieguo}), we have
		\[
		\begin{aligned}
			&\frac{d}{dt}\bigl(\|\nabla u\|_{L^2}^2 + \|\nabla u_x\|_{L^2}^2 + \|\nabla u_{xx}\|_{L^2}^2\bigr) \\
			&\quad \leq C\bigl(\|u\|_{L^2} + \|u_x\|_{L^2} + \|\nabla u\|_{L^\infty} + \|\nabla u_x\|_{L^2}\bigr)\bigl(\|\nabla u_{xx}\|_{L^2}^2 + \|\nabla u_x\|_{L^2}^2 + \|\nabla u\|_{L^2}^2\bigr),
		\end{aligned}
		\]
		which along with (\ref{ZK equation (2')}) and (\ref{blow up eq-2}) leads to
		\begin{equation}\label{blow up eq-3}
			\begin{aligned}
				&\frac{d}{dt}\bigl(\|\nabla u\|_{L^2}^2 + \|\nabla u_x\|_{L^2}^2 + \|\nabla u_{xx}\|_{L^2}^2\bigr) \\
				&\quad \leq C_1\bigl(\|u_0\|_{L^2} + \|u_{0,x}\|_{L^2} + M_1(T^*) + \|\nabla u\|_{L^\infty}\bigr)\bigl(\|\nabla u_{xx}\|_{L^2}^2 + \|\nabla u_x\|_{L^2}^2 + \|\nabla u\|_{L^2}^2\bigr).
			\end{aligned}
		\end{equation}
		Applying Gronwall's inequality to (\ref{blow up eq-3}) yields that for all $t \in [0,T^*)$,
		\begin{equation}\label{blow up eq-4}
			\begin{aligned}
				&\sup_{\tau \in [0,t]} \bigl(\|\nabla u(\tau)\|_{L^2}^2 + \|\nabla u_x(\tau)\|_{L^2}^2 + \|\nabla u_{xx}(\tau)\|_{L^2}^2\bigr) \\
				&\quad \leq \bigl(\|\nabla u_0\|_{L^2}^2 + \|\nabla u_{0,x}\|_{L^2}^2 + \|\nabla u_{0,xx}\|_{L^2}^2\bigr) e^{C_1\left(\int_0^t \|\nabla u\|_{L^\infty}\,d\tau + (\|u_0\|_{L^2} + \|u_{0,x}\|_{L^2} + M_1(T^*))t\right)}.
			\end{aligned}
		\end{equation}
		Hence, we get for all $t \in [0,T^*)$,
		\begin{equation}\label{blow up eq-5}
			\| \nabla u(t) \|_{L^2}^2 + \| \nabla u_x(t) \|_{L^2}^2 + \| \nabla u_{xx}(t) \|_{L^2}^2 \leq M_2(T^*), 
		\end{equation}
		where 
		\begin{equation}
			\begin{aligned}
				&M_2(T^*) := \bigl( \| \nabla u_0 \|_{L^2}^2 + \| \nabla u_{0,x} \|_{L^2}^2 + \| \nabla u_{0,xx} \|_{L^2}^2 \bigr) \\\nonumber
				&\quad \times e^{C_1 \Bigl( \int_0^{T^*} \| \nabla u \|_{L^\infty} \,d\tau + \bigl( \| u_0 \|_{L^2}^2 + \| u_{0,x} \|_{L^2}^2 + M_1(T^*) \bigr) T^* \Bigr)}. \nonumber
			\end{aligned}
		\end{equation}
		Similarly, thanks to (\ref{remark-lemma-local well-posedness-3}) and (\ref{case s=2 fanshugujijieguo}), we get
		\begin{align*}
			&\frac{d}{dt} \Bigl( \| (u, u_x) \|_{H^1}^2 + \| (\nabla u_{xx}, u_{yy}, u_{xyy}) \|_{L^2}^2  \Bigr)\\
			&\leq C_2 \Bigl( \| u_0 \|_{L^2} + \| u_{0,x} \|_{L^2}+ \| \nabla u \|_{L^\infty} + \| (\nabla u_x, \nabla u_{xx}) \|_{L^2} \Bigr) \\
			&\quad\quad\quad\times \Bigl( \| (u, u_x) \|_{H^1}^2 + \| (\nabla u_{xx}, u_{yy}, u_{xyy}) \|_{L^2}^2  \Bigr),
		\end{align*}
		which follows from (\ref{blow up eq-5}) that
		\begin{equation}\label{blow up eq-6}
			\begin{aligned}
				&\frac{d}{dt} \Bigl( \| (u, u_x) \|_{H^1}^2 + \| (\nabla u_{xx}, u_{yy}, u_{xyy}) \|_{L^2}^2  \Bigr) \\
				&\leq C_2 \Bigl( \| u_0 \|_{L^2} + \| u_{0,x} \|_{L^2} + M_2(T^*) + \| \nabla u \|_{L^\infty} \Bigr) \\
				&\quad\quad\quad\times \Bigl( \| (u, u_x) \|_{H^1}^2 + \| (\nabla u_{xx}, u_{yy}, u_{xyy}) \|_{L^2}^2 \Bigr).
			\end{aligned}
		\end{equation}
		Applying Gronwall's inequality to (\ref{blow up eq-6}) yields that for all \( t \in [0, T^*) \),
		\begin{equation}\label{blow up eq-7}
			\begin{aligned}
				&\sup_{\tau \in [0, t]} \Bigl( \| (u, u_x)(\tau) \|_{H^1}^2 + \| (\nabla u_{xx}, u_{yy}, u_{xyy})(\tau) \|_{L^2}^2  \Bigr) \\
				&\leq \Bigl( \| (u_0, u_{0,x}) \|_{H^1}^2 + \| (\nabla u_{xx}, u_{yy}, u_{xyy})(0) \|_{L^2}^2 \Bigr) \\
				&\quad\quad\quad\times e^{C_2 \Bigl( \int_0^t \| \nabla u \|_{L^\infty} \,d\tau + \bigl( \| u_0 \|_{L^2} + \| u_{0,x} \|_{L^2} + M_2(T^*) \bigr) t\Bigr)}.
			\end{aligned}
		\end{equation}
		Hence, we get for all \( t \in [0, T^*) \),
		\begin{equation}\label{blow up eq-8}
			\| (u, u_x)(t) \|_{H^1}^2 + \| (\nabla u_{xx}, u_{yy}, u_{xyy})(t) \|_{L^2}^2 \leq M_3(T^*), 
		\end{equation}
		where
		\begin{equation}
			\begin{aligned}
				&M_3(T^*) := \Bigl( \| (u_0, u_{0,x}) \|_{H^1}^2 + \| (\nabla u_{xx}, u_{yy}, u_{xyy})(0) \|_{L^2}^2 \Bigr)\\\nonumber
				&\quad\quad \times e^{C_2 \Bigl( \int_0^{T^*} \| \nabla u \|_{L^\infty} \,d\tau + \bigl( \| u_0 \|_{L^2} + \| u_{0,x} \|_{L^2} + M_2(T^*_{u_0}) \bigr) T^* \Bigr) }.\nonumber
			\end{aligned}
		\end{equation}
		It then follows that
		\begin{equation}\label{blow up eq-9}
			\| u(t) \|_{X^2}^2 \leq 2M_3(T^*), \quad \forall t \in [0, T^*).
		\end{equation}
		This is a contradiction, which completes the proof for $ s = 2$.\\
		\textbf{Step 2: $s=3$.} Differentiating (\ref{case s=2 eq-3}) with respect to $x$, we have
		\begin{align}
			& \partial_t \nabla u_{xxx} + u_{xxxx} \nabla u + 3 u_{xxx} \nabla u_x + 4 u_{xx} \nabla u_{xx} + 3u_x \nabla u_{xxx}+u\nabla u_{xxxx}    \nonumber \\
			&+G_{xx}\ast(\nabla u_{xxyy}+\kappa\nabla u_{xx}+2u\nabla u+u_x\nabla u_x) -2u_{xx}\nabla u-2u\nabla u_{xx}-4u_x\nabla u_x =0 .\label{case s=3 eq-1} 
		\end{align}
		Multiplying (\ref{case s=3 eq-1}) by $\nabla u_{xxx}$ and integrating it over $\mathbb{R}^2$, we get
		\begin{equation}
			\frac{1}{2}\frac{d}{dt}\|\nabla u_{xxx}\|^2_{L^2}=K_1+K_2+K_3+K_4,
		\end{equation}
		where
		\begin{align*}
			&K_1:=-\int_{\mathbb{R}^2}(3u_x \nabla u_{xxx}+u\nabla u_{xxxx}  )\nabla u_{xxx}~dxdy,\\
			&K_2:=-\int_{\mathbb{R}^2}(u_{xxxx} \nabla u + 3 u_{xxx} \nabla u_x + 4 u_{xx} \nabla u_{xx} )\nabla u_{xxx}~dxdy,\\
			&K_3:=-\int_{\mathbb{R}^2}\Bigl(G_{xx}\ast(2u\nabla u+u_x\nabla u_x) -2u_{xx}\nabla u-2u\nabla u_{xx}-4u_x\nabla u_x \Bigr)\nabla u_{xxx}~dxdy,\\
			&K_4:=-\int_{\mathbb{R}^2}\Big(G_{xx}\ast(\nabla u_{xxyy}+\kappa\nabla u_{xx})\Big)\nabla u_{xxx}~dxdy.
		\end{align*}
		Thanks to integration by parts, we get $K_4=0$, and
		\begin{align}
			|K_1|=\left|-\int_{\mathbb{R}^2}\frac{5}{2}u_x(\nabla u_{xxx})^2\,dxdy\right|\leq C\|\nabla u\|_{L^{\infty}}\|\nabla u_{xxx}\|^2_{L^2},
		\end{align}
		\begin{align}
			|K_2|&\leq \|\nabla u\|_{L^{\infty}}\|\nabla u_{xxx}\|^2_{L^2}+3\|u_{xxx}\nabla u_x\|_{L^2}\|\nabla u_{xxx}\|_{L^2}+4\|u_{xx}\nabla u_{xx}\|_{L^2}\|\nabla u_{xxx}\|_{L^2}\nonumber \\
			&\leq \|\nabla u\|_{L^{\infty}}\|\nabla u_{xxx}\|^2_{L^2}+3\|u_{xxx}\|^{\frac{1}{2}}_{L^2}\|u_{xxxy}\|^{\frac{1}{2}}_{L^2}\|\nabla u_x\|^{\frac{1}{2}}_{L^2}\|\nabla u_{xx}\|^{\frac{1}{2}}_{L^2}\|\nabla u_{xxx}\|_{L^2} \nonumber \\
			&\quad\quad\quad+4\|u_{xx}\|^{\frac{1}{2}}_{L^2}\|u_{xxy}\|^{\frac{1}{2}}_{L^2}\|\nabla u_{xx}\|_{L^2}^{\frac{1}{2}}\|\nabla u_{xxx}\|_{L^2}^{\frac{1}{2}}\|\nabla u_{xxx}\|_{L^2}\nonumber\\
			&\leq C(\|\nabla u\|_{L^{\infty}}+\|\nabla u_{xx}\|_{L^2})(\|\nabla u_x\|^2_{L^2}+\|\nabla u_{xxx}\|^2_{L^2}),
		\end{align}
		\begin{align}
			|K_3|&\leq 2\|\nabla u\|_{L^{\infty}}\|u\|_{L^2}\|\nabla u_{xxx}\|_{L^2}+\|u_{x}\|_{L^{\infty}}\|\nabla u_{x}\|_{L^2}\|\nabla u_{xxx}\|_{L^2}
			\nonumber \\
			&+ 2\|\nabla u\|_{L^{\infty}}\| u_{xx}\|_{L^2}\|\nabla u_{xxx}\|_{L^2}+4\|u_{x}\|_{L^{\infty}}\|\nabla u_{x}\|_{L^2}\|\nabla u_{xxx}\|_{L^2}+2\| u_{x}\|_{L^{\infty}}\|\nabla u_{xx}\|^2_{L^2} \nonumber \\
			&\leq C\|\nabla u\|_{L^{\infty}}(\|u\|_{L^2}^2+\|\nabla u_x\|^2_{L^2}+\|\nabla u_{xx}\|^2_{L^2}+\|\nabla u_{xxx}\|^2_{L^2}).
		\end{align}
		So, we obtain
		\begin{equation}\label{case s=3 guji-1}
			\frac{d}{dt}\|\nabla u_{xxx}\|^2_{L^2}\leq (\|\nabla u\|_{L^{\infty}}+\|\nabla u_{xx}\|_{L^2})(\|u\|_{L^2}^2+\|\nabla u_x\|^2_{L^2}+\|\nabla u_{xx}\|^2_{L^2}+\|\nabla u_{xxx}\|^2_{L^2}).
		\end{equation}
		Combining (\ref{case s=2 fanshuguji-a^*-jieguo}) and (\ref{case s=3 guji-1}) , we have
		\begin{align}\label{case s=3 guji-2}
			&\frac{d}{dt}(\|u\|_{L^2}^2+\|\nabla u\|^2_{L^2}+\|\nabla u_x\|^2_{L^2}+\|\nabla u_{xx}\|^2_{L^2}+\|\nabla u_{xxx}\|^2_{L^2}) \nonumber\\
			&\leq C(\|u\|_{L^{\infty}}+\|\nabla u\|_{L^{\infty}}+\|\nabla u_x\|_{L^{2}}+\|\nabla u_{xx}\|_{L^{2}})\nonumber\\
			&\times (\|u\|_{L^2}^2+\|\nabla u\|^2_{L^2}+\|\nabla u_x\|^2_{L^2}+\|\nabla u_{xx}\|^2_{L^2}+\|\nabla u_{xxx}\|^2_{L^2}).
		\end{align}
		Applying Gronwall's inequality to (\ref{case s=3 guji-2}), according to (\ref{remark-lemma-local well-posedness-3}) and (\ref{ZK equation (2')}), one yields that for all $t\in [0,T^*)$,
		\begin{align}
			&\sup\limits_{\tau\in \left[0,t\right]}\Big(\|u\|_{L^2}^2+\|\nabla u\|^2_{L^2}+\|\nabla u_x\|^2_{L^2}+\|\nabla u_{xx}\|^2_{L^2}+\|\nabla u_{xxx}\|^2_{L^2}\Big)\nonumber\\
			&\leq \Bigl(\|u_0\|_{L^2}^2+\|\nabla u_0\|^2_{L^2}+\|\nabla u_{0,x}\|^2_{L^2}+\|\nabla u_{0,xx}\|^2_{L^2}+\|\nabla u_{0,xxx}\|^2_{L^2}\Bigr)\nonumber\\
			&\quad\quad\times e^{C \Bigl( \int_0^t \| \nabla u \|_{L^\infty} \,d\tau + \bigl( \| u_0 \|_{L^2} + \| u_{0,x} \|_{L^2} + M_2(T^*) \bigr) t\Bigr) }.
		\end{align}
		Hence, for all $t\in [0,T^*)$, we have
		\begin{equation}
			\sup\limits_{\tau\in \left[0,t\right]}\Big(\|u\|_{L^2}^2+\|\nabla u\|^2_{L^2}+\|\nabla u_x\|^2_{L^2}+\|\nabla u_{xx}\|^2_{L^2}+\|\nabla u_{xxx}\|^2_{L^2}\Big)\leq M_4(T^*),
		\end{equation}
		where 
		\begin{align*}
			M_4(T^*)&:= \Bigl(\|u_0\|_{L^2}^2+\|\nabla u_0\|^2_{L^2}+\|\nabla u_{0,x}\|^2_{L^2}+\|\nabla u_{0,xx}\|^2_{L^2}+\|\nabla u_{0,xxx}\|^2_{L^2}\Bigr)\\
			&\times e^{C \Bigl( \int_0^t \| \nabla u \|_{L^\infty} \,d\tau + \bigl( \| u_0 \|_{L^2} + \| u_{0,x} \|_{L^2} + M_2(T^*) \bigr) t\Bigr)}.
		\end{align*}
		Then, acting the operator $\nabla$ to (\ref{case s=2 eq-4}) and (\ref{case s=2 eq-5}) respectively gives rise to
		\begin{align}
			& \partial_t\nabla u_{yy}+u_{xyy}\nabla u+u\nabla u_{xyy}+2u_y\nabla u_{xy}+2u_{xy}\nabla u_y+u_x\nabla u_{yy}+u_{yy}\nabla u_x\nonumber\\
			&\quad+G\ast(\nabla u_{xyyyy}+\kappa\nabla u_{xyy})+G_x\ast\nabla(2uu_{yy}+2u^2_y+u_xu_{xyy}+u^2_{xy})=0,\label{case s=3 eq-2}\\ 
			&\partial_t\nabla u_{xyy}+u\nabla u_{xxyy}+u_{xxyy}\nabla u+u_x\nabla u_{xyy}+u_{xyy}\nabla u_x+2u_{xy}\nabla u_{xy}\nonumber \\
			&\quad+2u_y\nabla u_{xxy}+2u_{xxy}\nabla u_y+u_{xx}\nabla u_{yy}+u_{yy}\nabla u_{xx}+G_x\ast(\nabla u_{xyyyy}+\kappa\nabla  u_{xyy})\nonumber \\
			&+G\ast\nabla(2uu_{yy}+2u^2_y+u_xu_{xyy}+u^2_{xy})-2u\nabla u_{yy}-2u_{yy}\nabla u-4u_y\nabla u_y=0. \label{case s=3 eq-3}
		\end{align}
		Taking the $L^2$ inner-products between two equations in (\ref{case s=3 eq-2})-(\ref{case s=3 eq-3}) and $\nabla u_{yy}$, $\nabla u_{xyy}$ respectively,  and summating them, we get
		\begin{equation}\label{case s=3 guji-3}
			\frac{1}{2}\frac{d}{dt}(\|u_{yy}\|^2_{L^2}+\|u_{xyy}\|^2_{L^2})=L_1+L_2+L_3+L_4+L_5,
		\end{equation}
		where
		\begin{align*}
			&L_1:=-\int_{\mathbb{R}^2}(u\nabla u_{xxyy}+u_{xxyy}\nabla u)\nabla u_{xyy}+(u\nabla u_{xyy}+u_{xyy}\nabla u)\nabla u_{yy}~dxdy,\\
			&L_2:=-\int_{\mathbb{R}^2}(u_x\nabla u_{xyy}+u_{xyy}\nabla u_x+2u_{xy}\nabla u_{xy}+2u_y\nabla u_{xxy}+2u_{xxy}\nabla u_y\\
			&\quad\quad\quad+u_{xx}\nabla u_{yy}+u_{yy}\nabla u_{xx}-2u\nabla u_{yy}-2u_{yy}\nabla u-4u_y\nabla u_y)\nabla u_{xyy}~dxdy,\\
			&L_3:=-\int_{\mathbb{R}^2}(2u_y\nabla u_{xy}+2u_{xy}\nabla u_y+u_x\nabla u_{yy}+u_{yy}\nabla u_x)\nabla u_{yy}\,dxdy,\\
			&L_4:=-\int_{\mathbb{R}^2}\Bigl(G_x\ast\nabla(2uu_{yy}+2u^2_y+u_xu_{xyy}+u^2_{xy})\nabla u_{yy}  \\
			&\quad\quad\quad+G\ast\nabla(2u^2_y+2uu_{yy}+u^2_{xy}+u_xu_{xyy})\nabla u_{xyy}\Bigr)~dxdy,\\
			&L_5:=-\int_{\mathbb{R}^2}G\ast(\nabla u_{xyyyy}+\kappa\nabla u_{xyy})\nabla u_{yy}+G_x\ast(\nabla u_{xyyyy}+\kappa\nabla u_{xyy})\nabla u_{xyy}  ~dxdy.
		\end{align*}
		Applying integration by parts yields $L_4=L_5=0$, and thanks to Lemma \ref{Sobolev embedding} again, we have
		\begin{equation}
			|L_1|\leq \|\nabla u\|_{L^{\infty}}(\|\nabla u_{yy}\|^2_{L^2}+\|\nabla u_{xyy}\|^2_{L^2}),\nonumber
		\end{equation}
		\begin{align}
			|L_2|&\leq(\|u_x\|_{L^{\infty}}\|\nabla u_{xyy}\|_{L^{2}}+\|u_{xyy}\nabla u_x\|_{L^{2}}+2\|u_{xy}\nabla u_{xy}\|_{L^{2}}+2\|u_y\|_{L^{\infty}}\|\nabla u_{xxy}\|_{L^{2}}\nonumber\\
			&\quad+2\|u_{xxy}\nabla u_y\|_{L^{2}}+\|u_{xx}\nabla u_{yy}\|_{L^2}+\|u_{yy}\nabla u_{xx}\|_{L^2})\|\nabla u_{yyx}\|_{L^2}\nonumber\\
			&\quad\quad+\|u_x\|_{L^{\infty}}\|\nabla u_{yy}\|_{L^2}^2+2\|\nabla u\|_{L^{\infty}}\|u_{yy}\|_{L^2}\|\nabla u_{yyx}\|_{L^2}+4\|u_y\|_{L^{\infty}}\|\nabla u_y\|_{L^2}\|\nabla u_{yyx}\|_{L^2}\nonumber\\
			&\leq(\|u_x\|_{L^{\infty}}\|\nabla u_{xyy}\|_{L^{2}}+\|\nabla u_x\|_{L^{2}}^{\frac{1}{2}}\|\nabla u_{xy}\|_{L^{2}}^{\frac{1}{2}}\|u_{xyy}\|_{L^{2}}^{\frac{1}{2}}\|u_{xxyy}\|_{L^{2}}^{\frac{1}{2}}\nonumber\\
			&\quad+2\|\nabla u_{xy}\|_{L^{2}}^{\frac{1}{2}}\|\nabla u_{xyy}\|_{L^{2}}^{\frac{1}{2}}\|u_{xy}\|_{L^{2}}^{\frac{1}{2}}\|u_{xxy}\|_{L^{2}}^{\frac{1}{2}}+2\|u_y\|_{L^{\infty}}\|\nabla u_{xxy}\|_{L^{2}}\nonumber\\
			&\quad+2\|\nabla u_y\|^{\frac{1}{2}}_{L^{2}}\|\nabla u_{yy}|^{\frac{1}{2}}_{L^{2}}\|u_{xxy}\|^{\frac{1}{2}}_{L^{2}}\|u_{xxxy}\|^{\frac{1}{2}}_{L^{2}}+\|u_{xx}\|^{\frac{1}{2}}_{L^2}\|u_{xxy}\|^{\frac{1}{2}}_{L^2}\|\nabla u_{yy}\|^{\frac{1}{2}}_{L^2}\|\nabla u_{yyx}\|^{\frac{1}{2}}_{L^2}\nonumber\\
			&\quad+\|u_{yy}\|^{\frac{1}{2}}_{L^2}\|u_{yyx}\|^{\frac{1}{2}}_{L^2}\|\nabla u_{xx}\|^{\frac{1}{2}}_{L^2}\|\nabla u_{xxy}\|^{\frac{1}{2}}_{L^2}     )\|\nabla u_{yyx}\|_{L^2}\nonumber\\
			&\quad\quad+\|u_x\|_{L^{\infty}}\|\nabla u_{yy}\|_{L^2}^2+2\|\nabla u\|_{L^{\infty}}\|u_{yy}\|_{L^2}\|\nabla u_{yyx}\|_{L^2}+4\|u_y\|_{L^{\infty}}\|\nabla u_y\|_{L^2}\|\nabla u_{yyx}\|_{L^2}\nonumber\\
			&\leq C(\|\nabla u\|_{L^{\infty}}+\|\nabla u_x\|_{L^2}+\|\nabla u_{xx}\|_{L^2}+\|u_{yy}\|_{L^2}+\|u_{xyy}\|_{L^2})\times\nonumber\\
			&\quad\quad(\|\nabla u_{xx}\|^2_{L^2}+\|\nabla u_{xyy}\|^2_{L^2}+\|\nabla u_{xxx}\|^2_{L^2}   ),\nonumber
		\end{align}
		and
		\begin{align}
			|L_3|&\leq (2\|u_y\|_{L^{\infty}}\|\nabla u_{xy}\|_{L^2}+2\|u_{xy}\nabla u_y\|_{L^2}+\|u_x\|_{L^{\infty}}\|\nabla u_{yy}\|_{L^2}+\|u_{yy}\nabla u_x\|_{L^2})\|\nabla u_{yy}\|_{L^2}\nonumber\\
			&\leq (2\|u_y\|_{L^{\infty}}\|\nabla u_{xy}\|_{L^2}+2\|u_{xy}\|^{\frac{1}{2}}_{L^2}\|u_{xxy}\|^{\frac{1}{2}}_{L^2}\|\nabla u_y\|^{\frac{1}{2}}_{L^2}\|\nabla u_{yy}\|^{\frac{1}{2}}_{L^2}\nonumber\\
			&\quad\quad+\|u_x\|_{L^{\infty}}\|\nabla u_{yy}\|_{L^2}+\|u_{yy}\|^{\frac{1}{2}}_{L^2}\|u_{yyy}\|^{\frac{1}{2}}_{L^2}\|\nabla u_x\|^{\frac{1}{2}}_{L^2}\|\nabla u_{xx}\|^{\frac{1}{2}}_{L^2}    )\|\nabla u_{yy}\|_{L^2}\nonumber\\
			&\leq C(\|\nabla u\|_{L^{\infty}}+\|u_{yy}\|_{L^2}+\|\nabla u_x\|_{L^2})(\|u_{xyy}\|^2_{L^2}+\|\nabla u_{xx}\|^2_{L^2}+\|\nabla u_{yy}\|^2_{L^2}).\nonumber
		\end{align}
		Hence, we obtain
		\begin{align}\label{case s=3 guji-4}
			\frac{d}{dt}&(\|\nabla u_{yy}\|^2_{L^2}+\|\nabla u_{xyy}\|^2_{L^2})\nonumber\\
			&\leq C(\|\nabla u\|_{L^{\infty}}+\|\nabla u_x\|_{L^2}+\|\nabla u_{xx}\|_{L^2}+\|u_{yy}\|_{L^2}+\|u_{xyy}\|_{L^2})\nonumber\\
			&\quad \times(\|u_{xyy}\|^2_{L^2}+\|\nabla u_{xx}\|^2_{L^2}+\|\nabla u_{yy}\|^2_{L^2}+\|\nabla u_{xxx}\|^2_{L^2}+\|\nabla u_{xyy}\|^2_{L^2}).
		\end{align}
		Combining (\ref{case s=2 fanshuguji-b-jieguo}), (\ref{case s=3 guji-2}) and (\ref{case s=3 guji-4}), one gets
		\begin{align}\label{case s=3 guji-5}
			\frac{d}{dt} &\left( \|(u, u_x)\|_{H^1}^2 + \|(\nabla u_{xx}, \nabla u_{xxx})\|_{L^2}^2 + \|(u_{yy}, u_{xyy}) \|_{H^1}^2  \right)\nonumber\\
			&\leq C(\|u\|_{L^{\infty}}+ \|\nabla u\|_{L^\infty} + \|\nabla u_x\|_{L^2} + \|\nabla u_{xx}\|_{L^2} + \|u_{yy}\|_{L^2} + \|u_{xyy}\|_{L^2} )\nonumber\\
			&\quad \times\left( \|(u, u_x)\|_{H^1}^2 + \|(\nabla u_{xx}, \nabla u_{xxx})\|_{L^2}^2 + \|(u_{yy}, u_{xyy}) \|_{H^1}^2  \right)\nonumber\\
			& \leq C( \|\nabla u\|_{L^\infty} + M_3(T^*) )\left( \|(u, u_x)\|_{H^1}^2 + \|(\nabla u_{xx}, \nabla u_{xxx})\|_{L^2}^2 + \|(u_{yy}, u_{xyy}) \|_{H^1}^2  \right).\nonumber\\
		\end{align}
		Applying Gronwall's inequality to (\ref{case s=3 guji-5}) yields that for all $t\in [0,T^*)$,
		\begin{align}
			&\sup\limits_{\tau\in \left[0,t\right]}\Big(
			\|(u, u_x)\|_{H^1}^2 + \|(\nabla u_{xx}, \nabla u_{xxx})\|_{L^2}^2 + \|(u_{yy}, u_{xyy}) \|_{H^1}^2
			\Big)\nonumber\\
			&\leq \Bigl(
			\|(u_0, u_{0,x})\|_{H^1}^2 + \|(\nabla u_{0,xx}, \nabla u_{0,xxx})\|_{L^2}^2 + \|(u_{0,yy}, u_{0,xyy}) \|_{H^1}^2
			\Bigr)\nonumber\\
			&\quad\quad\times e^{C \Bigl( \int_0^t \| \nabla u \|_{L^\infty} \,d\tau +  M_3(T^*)t\Bigr) }.
		\end{align}
		Thus, for all \( t \in [0, T^*) \), we have
		\begin{equation}\label{blow up eq-10}
			\|(u, u_x)\|_{H^1}^2 + \|(\nabla u_{xx}, \nabla u_{xxx})\|_{L^2}^2 + \|(u_{yy}, u_{xyy}) \|_{H^1}^2  \leq M_5(T^*) ,
		\end{equation}
		where
		\begin{equation}
			\begin{aligned}
				&M_5(T^*) := \Bigl( 
				\|(u_0, u_{0,x})\|_{H^1}^2 + \|(\nabla u_{0,xx}, \nabla u_{0,xxx})\|_{L^2}^2 + \|(u_{0,yy}, u_{0,xyy}) \|_{H^1}^2
				\Bigr)\\\nonumber
				&\quad\quad\times e^{C \Bigl( \int_0^t \| \nabla u \|_{L^\infty} \,d\tau +  M_3(T^*)t\Bigr)}.\nonumber
			\end{aligned}
		\end{equation}
		It then follows that
 		\begin{equation}\label{blow up eq-11}
			\| u(t) \|_{X^3}^2 \leq 3M_5(T^*) ,\quad \forall t \in [0, T^*).
		\end{equation}
		This is a contradiction, which completes the proof of Theorem \ref{blow up criterion ZK} for $s = 3$.\\
		\textbf{Step 3: $s>3$.} In view of (\ref{xian yan gu ji 1}), we have
		\begin{align*}
			\frac{d}{dt} (\|u\|_{H^s}^2 + \|u_x\|_{H^s}^2) &\leq C (\|u\|_{X^2} + \|u\|_{X^3} + \|\nabla u\|_{L^\infty}) (\|u\|_{H^s}^2 + \|u_x\|_{H^s}^2)\\
			&\leq C \left( \sqrt{2M_3(T^*)} + \sqrt{3M_5(T^*)} + \|\nabla u\|_{L^\infty} \right) (\|u\|_{H^s}^2 + \|u_x\|_{H^s}^2).
		\end{align*}
		By Gronwall's inequality, we have for all \( t \in [0, T^*) \)
		\[
		\sup_{\tau \in [0,t]} (\|u\|_{H^s}^2 + \|u_x\|_{H^s}^2) \leq (\|u_0\|_{H^s}^2 + \|u_{0,x}\|_{H^s}^2) e^{C \left( \int_0^t \|\nabla u(\tau)\|_{L^\infty} d\tau + \sqrt{2M_3(T^*)} t + \sqrt{3M_5(T^*)} t \right)}.
		\]
		If \( T^* < \infty \) satisfies \( \int_0^{T^*} \|\nabla u(\tau)\|_{L^\infty} d\tau < \infty \), then we have
		\[
		\|u\|_{H^s}^2 + \|u_x\|_{H^s}^2 \leq M_6(T^*), \quad \forall t \in [0, T^*),
		\]
		where
		\[
		M_6(T^*) := (\|u_0\|_{H^s}^2 + \|u_{0,x}\|_{H^s}^2) e^{C \left( \int_0^{T^*} \|\nabla u(\tau)\|_{L^\infty} d\tau + \sqrt{2M_3(T^*)} T^* + \sqrt{3M_5(T^*)} T^* \right)},
		\]
		which contradicts the assumption that \( T^* < \infty \) is the maximal existence time. This proves Theorem \ref{blow up criterion ZK} for \( s > 3 \).\\
		\textbf{Step 4:  $2 < s < 3$.} By the interpolation inequality, for all $u\in X^3$, we have
		\begin{align*}
			\|u\|_{X^s}^2 &= \|u\|_{H^s}^2 + \|u_x\|_{H^s}^2\\
			&\leq \|u\|_{H^2}^{2(s-2)}\|u\|_{H^3}^{2(3-s)} + \|u_x\|_{H^2}^{2(s-2)}\|u_x\|_{H^3}^{2(3-s)}\\
			&\leq 2\|u\|_{X^2}^{2(s-2)}\|u\|_{X^3}^{2(3-s)}.
		\end{align*}
		If \( \int_0^{T^*} \|\nabla u(\tau)\|_{L^\infty} d\tau < \infty \), from (\ref{blow up eq-9}) and (\ref{blow up eq-11}), we get
		\[
		\|u\|_{X^s}^2 \leq 24 M_3(T^*)^{s-2} M_5(T^*)^{3-s}.
		\]
		Note that $X^3$ is dense in $X^s(2<s<3).$ A standard approximate argument yields
		$$
		\|u\|_{X^s}^2 \leq 25 M_3(T^*)^{s-2} M_5(T^*)^{3-s},~~\forall u\in X^s.
		$$
		This leads to a contradiction, which completes the proof of Theorem \ref{blow up criterion ZK} for \( 2 < s < 3 \). 
		
		Consequently, we have completed the proof of Theorem \ref{blow up criterion ZK} from Step 1 to Step 4.
	\end{proof}
	\section{Blow-up Data}
	We now in this section focus our attention on blow-up data for the CH-ZK equation (\ref{ZK equation (2)}).
	\begin{proof}[Proof of Theorem \ref{blow-up phenomena}]
		In view of the assumption $u(t,x,y)=-u(t,-x,y)$, $\forall t\geq 0,\forall (x,y) \in \mathbb{R}^2$, we have $u(t,0,y)=0$, $(G \ast u) (t,0,y) =0$ and $(G \ast u_{xxyy} )(t,0,y)=0$. Multiplying by $\phi(y)$ on both sides of (\ref{ZK equation (2)}), differentiating it with respect to $x$ and integrating it over $\mathbb{R}$ with respect to $y$, one yields
		\begin{align}
			\int_{\mathbb{R}} \left( u_{tx} + uu_{xx}\right) \phi dy =& \int_{\mathbb{R}} \left( -\frac{1}{2} u_x^2 +  u^2 + \kappa u \right) \phi dy \nonumber \\
			& -\int_{\mathbb{R}} G \ast \left( u^2 +\frac{1}{2} u_x^2 + \kappa u \right) \phi dy -\int_{\mathbb{R}} G \ast u_{xxyy} \phi dy. \nonumber 
		\end{align}
		At the point $(t,0,y)$, we have 
		\begin{align}
			\frac{d}{dt} \int_{\mathbb{R}} u_x (t,0,y) \phi (y) dy = &- \frac{1}{2} \int_{\mathbb{R}} u_x^2(t,0,y) \phi (y) dy -\int_{\mathbb{R}} G \ast \left(  u^2 + \frac{1}{2} u_x^2  \right) (t,0,y) \phi(y) dy . \nonumber 
		\end{align}
		Taking advantage of the H\"{o}lder inequality, one has
		\begin{align}
			\left( \int_{\mathbb{R}} u_x(t,0,y) \phi(y) dy\right)^2 \leq \int_{\mathbb{R}} u_x^2(t,0,y) \phi(y) dy . \nonumber 
		\end{align} 
		Thus, we have 
		\begin{align}
			\frac{d}{dt} \int_{\mathbb{R}} u_x(t,0,y) \phi(y) dy \leq &-\frac{1}{2} \left( \int_{\mathbb{R}} u_x (t,0,y) \phi(y) dy \right)^2.  \nonumber 
		\end{align}
		Let $ B(t) : =\int_{\mathbb{R}} u_x(t,0,y) \phi(y)$,  we have 
		\begin{align}
			\frac{d}{dt}B(t) \leq -\frac{1}{2} B^2(t) . \label{blow-up data eq-1}
		\end{align}
		Since the initial data satisfies $\int_{\mathbb{R}} u_{0,x} (0,y) \phi (y) dy < 0$, we deduce that 
		\begin{align}
			\lim\limits_{t \to T^{\ast}} \int_{\mathbb{R}} u_x(t,0,y) \phi(y) dy =-\infty, \nonumber 
		\end{align}
		with 
		\begin{equation}
			T^{\ast} \leq T_1 := -\frac{2}{\int_{\mathbb{R}} u_{0,x} (0,y)\phi(y)  dy}.\nonumber 
		\end{equation}
		This completes the proof of Theorem \ref{blow-up phenomena}.
	\end{proof}
	\section{Unique continuation property of the solutions} 
	This section is devoted to the study of the unique continuation properity of solutions to the CH-ZK equation (\ref{ZK equation (2)}).
	\begin{proof}[Proof of Theorem \ref{Liuville solution}]
		Integrating over $\mathbb{R}$ by (\ref{ZK equation (2)}) with respect to $y$, there then appears that 
		\begin{equation}\label{Liuville-type 1}
			\int_{\mathbb{R}}u_t+uu_{x}\,dy=\int_{\mathbb{R}}u_t+uu_{x}+G\ast u_{xyy}\,dy=-\int_{\mathbb{R}}G_{x}\ast(u^2+\frac{1}{2}u^2_{x})\,dy.
		\end{equation}
		By virtue of the assumption, we see that
		\begin{equation}\label{Liuville-type 2}
			u^2(t,x,y)+\frac{u^2_{x}(t,x,y)}{2}=0,~~~\forall (t,x)\in \Omega,y\in \mathbb{R}.
		\end{equation}
		From (\ref{Liuville-type 1}), one gets
		\begin{equation}\label{Liuville-type 3}
			\int_{\mathbb{R}}G_{x}\ast(u^2+\frac{1}{2}u^2_{x})~dy\mid_{\Omega}=0.
		\end{equation}
		Since $\Omega$ is an open set, there exists a $t^* \in (0,T)$ and $I = [a,b]$, $a < b$ such that $\{t^*\} \times I \subset \Omega$. Define
		$$
		F(x,y):= G_{x} \ast \left(u^2 + \frac{u_{x}^2}{2}\right)(t^*,x,y) = -\frac{1}{2}\operatorname{sgn}(\cdot) e^{-|\cdot|} * \left(u^2 + \frac{u_{x}^2}{2}\right)(t^*,\cdot,y), 
		$$
		and
		$$
		f(x,y) := \left(u^2 + \frac{u_{x}^2}{2}\right)(t^*,x,y) \geq 0.
		$$
		The regularity of $u$ ensures that $F$ and $f$ are smooth functions. From the above argument, we observe that $\int_{\mathbb{R}} F(x,y)\,dy = 0,~f(x,y) = 0,~\forall x \in [a,b]$. 
		
		On the other hand, for any $z \notin [a,b]$, it holds that $-\operatorname{sgn}(b-z)e^{-|b-z|} > -\operatorname{sgn}(a-z)e^{-|a-z|}$. Hence for any $y \in \mathbb{R}$, it is adduced that
		\begin{equation}\label{Liuville-type 4}
			\begin{aligned}
				F(b,y) &= -\frac{1}{2}\int_{-\infty}^{+\infty} \operatorname{sgn}(b-z)e^{-|b-z|}f(z,y)\,dz \\
				&\geq -\frac{1}{2}\int_{-\infty}^{+\infty} \operatorname{sgn}(a-z)e^{-|a-z|}f(z,y)\,dz = F(a,y).
			\end{aligned}
		\end{equation}
		Since $\int_{\mathbb{R}}(F(b,y) - F(a,y))\,dy = 0$ and $F(b,y) - F(a,y) \geq 0$, it follows that $F(b,y) - F(a,y) = 0$, which implies that $f(x,y) \equiv 0$ in $\mathbb{R}^2$ and also the solution $u(t^*,x,y) = 0$ in $\mathbb{R}^2$. The desired result is thus obtained by the uniqueness of the solution. 
	\end{proof}
	\section{Traveling wave solutions} 
	In this section, we investigate the traveling wave solutions to the CH-ZK equation (\ref{ZK equation (2)}). Let us firstly discuss the existence of peaked solitary-wave solution in the form
	\begin{equation}
		u(t,x,y)=c e^{-\left|x+\beta y-ct\right|},\quad c\in \mathbb{R}.
	\end{equation}
	Such a type of solution is a weak solution in the following sense.
	\begin{definition}
		Given initial data $u_0\in H^1(\mathbb{R}^2)$, the function $u \in C([0,T); H^1_{loc}(\mathbb{R}^2))$ is said to be a weak solution to CH-ZK equation(\ref{ZK equation (2)}), if it satisfies the following identity:
		\begin{align*}
			&\int_0^T \int_{\mathbb{R}^2} \left[ -u\varphi _{t,x}+\varphi _x \left(uu_x+ G_x *(u^2 + \frac{1}{2}u_x^2+\kappa u) \right) -\varphi_{yy}\left(G\ast u\right)_{xx} \right] \,dxdydt \\
			&+ \int_{\mathbb{R}^2} u_0(x,y)\varphi_x(0,x,y)\,dxdy = 0,
		\end{align*}
		for any smooth test function $\varphi(t,x)\in C_c^{\infty}([0,T)\times\mathbb{R}^2)$.If $u$ is a weak solution on $[0,T)$ for every $T>0$, then it is called a global weak solution, where $G(x) = \frac{1}{2}e^{-|x|}$ is the fundamental solution of the operator $(1-\partial_{x}^2)^{-1}\text{ on }\mathbb{R}$, the convolution is taken only with respect to the $x$ variable, and $u_0(x,y) = u(0,x,y)$.
	\end{definition}
	Now we verify the existence of peaked solitary waves and smooth solitary waves, respectively.
	\begin{proof}[Proof of Theorem \ref{xingbojie}]
		 (1) For the existence of peaked solitary waves, assume that
		\begin{equation}\label{global weak peak solution 1}
			u_c(t,x,y) = ce^{-|x +\beta y- ct|}.
		\end{equation}
		Then we get
		\begin{equation}\label{global weak peak solution 2}
			\partial_t u_c = c \cdot \operatorname{sign}(x+\beta y-ct)  u_c, \quad \partial_{x} u_c = -\operatorname{sign}(x+\beta y -ct)  u_c,
		\end{equation}
		which implies $\frac{1}{2}(\partial_{x} u_c)^2 = \frac{1}{2}u_c^2$ and $u_c^2 + \frac{1}{2}(\partial_{x} u_c)^2 = \frac{3}{2}u_c^2$. Notice from (\ref{global weak peak solution 1}) that $G_{x}(x) = -\frac{1}{2}\operatorname{sign}(x)e^{-|x|}$ for $x \in \mathbb{R}$, we have
		\begin{equation}\label{global weak peak solution 3}
			G_{x} * \Bigl(u_c^2 + \frac{1}{2}(\partial_{x} u_c)^2\Bigr)(t,x,y) = \int_{-\infty}^{+\infty} \Bigl(-\frac{1}{2}\operatorname{sign}(x+\beta y-z)e^{-|x+\beta y-z|}\Bigr) \frac{3}{2}c^2 \cdot e^{-2|z-ct|}\,dz.
		\end{equation}
		When $x+\beta y > ct$, we split the right hand side of (\ref{global weak peak solution 3}) into the following three parts:
		\begin{equation}\label{global weak peak solution 4}
			\begin{aligned}
				I &= G_{x} * \Bigl(u_c^2 + \frac{1}{2}(\partial_{x} u_c)^2\Bigr)(t,x,y) \\
				&= \biggl(\int_{-\infty}^{ct} + \int_{ct}^{x+\beta y} + \int_{x+\beta y}^{+\infty}\biggr) \Bigl(-\frac{1}{2}\operatorname{sign}(x+\beta y-z)e^{-|x+\beta y-z|}\Bigr) \frac{3}{2}c^2 \cdot e^{-2|z-ct|}\,dz.\\
				&=: I_1 + I_2 + I_3.
			\end{aligned}
		\end{equation}
		We directly compute $I_1$ as follows:
		\begin{equation}\label{global weak peak solution 5}
			I_1 = -\frac{3}{4}c^2 \int_{-\infty}^{ct} e^{-(x+\beta y-z)} e^{-2(ct-z)}\,dz = -\frac{1}{4}c^2 e^{-(x+\beta y-ct)}.
		\end{equation}
		In a similar manner,
		\begin{equation}\label{global weak peak solution 6}
			I_2 = \frac{3}{4}c^2\Bigl(e^{-2(x+\beta y-ct)} - e^{-(x+\beta y-ct)}\Bigr) \quad \text{and} \quad I_3 = \frac{1}{4}c^2 e^{-2(x+\beta y-ct)}.
		\end{equation}
		Plugging (\ref{global weak peak solution 5}) and (\ref{global weak peak solution 6}) into (\ref{global weak peak solution 4}), we deduce that for $x+\beta y > ct$,
		\begin{equation}\label{global weak peak solution 7}
			G_{x} * \Bigl(u_c^2 + \frac{1}{2}(\partial_{x} u_c)^2\Bigr)(t,x,\beta y) = c^2\Bigl(e^{-2(x+\beta y-ct)} - e^{-(x+\beta y-ct)}\Bigr).
		\end{equation}
		While for the case $x+\beta y \leq ct$, we split the right hand side of (\ref{global weak peak solution 3}) into the following three parts:
		\begin{equation}\label{global weak peak solution 8}
			\begin{aligned}
				II &= G_{x} * \Bigl(u_c^2 + \frac{1}{2}(\partial_{x} u_c)^2\Bigr)(t,x,y) \\
				&= \biggl(\int_{-\infty}^{x+\beta y} + \int_{x+\beta y}^{ct} + \int_{ct}^{+\infty}\biggr) \Bigl(-\frac{1}{2}\operatorname{sign}(x+\beta y-z)e^{-|x+\beta y-z|}\Bigr) \frac{3}{2}c^2 \cdot e^{-2|z-ct|}\,dz.\\
				&=: II_1 + II_2 + II_3.
			\end{aligned}
		\end{equation}
		For $II_1$, a direct computation gives rise to
		\begin{equation}\label{global weak peak solution 9}
			II_1 = -\frac{3}{4}c^2 \int_{-\infty}^{x+\beta y} e^{-(x+\beta y-z)} e^{-2(ct-z)}\,dz = -\frac{1}{4}c^2 e^{2(x+\beta y-ct)}.
		\end{equation}
		Similarly, one gets
		\begin{equation}\label{global weak peak solution 10}
			II_2 = \frac{3}{4}c^2\Bigl(e^{x+\beta y-ct} - e^{2(x+\beta y-ct)}\Bigr) \quad \text{and} \quad II_3 = \frac{1}{4}c^2 e^{x+\beta y-ct}.
		\end{equation}
		Plugging (\ref{global weak peak solution 9}) and (\ref{global weak peak solution 10}) into (\ref{global weak peak solution 8}), we deduce that for $x+\beta y \leq ct$,
		\begin{equation}\label{global weak peak solution 11}
			G_{x} * \Bigl(u_c^2 + \frac{1}{2}(\partial_{x} u_c)^2\Bigr)(t,x,y) = c^2\Bigl(e^{x+\beta y-ct} - e^{2(x+\beta y-ct)}\Bigr),
		\end{equation}
		which along with (\ref{global weak peak solution 7}) yields
		\begin{equation}\label{global weak peak solution 12}
			G_{x} * \Bigl(u_c^2 + \frac{1}{2}(\partial_{x} u_c)^2\Bigr)(t,x,y) = 
			\begin{cases}
				c^2\Bigl(e^{-2(x+\beta y-ct)} - e^{-(x+\beta y-ct)}\Bigr), & \text{for } x+\beta y> ct, \\[6pt]
				c^2\Bigl(e^{x+\beta y-ct} - e^{2(x+\beta y-ct)}\Bigr), & \text{for } x+\beta y \leq ct.
			\end{cases}
		\end{equation}
		
		On the other hand, one can deduce
		\[
		\partial_t u_c + u_c \partial_{x} u_c = \bigl(c - c \cdot e^{-|x+\beta y-ct|}\bigr) \operatorname{sign}(x+\beta y-ct) c \cdot e^{-|x+\beta y-ct|},
		\]
		which implies
		\begin{equation}\label{global weak peak solution 13}
			\partial_t u_c + u_c \partial_{x} u_c(t,x,y) = 
			\begin{cases}
				c^2\Bigl(e^{-(x+\beta y-ct)} - e^{-2(x+\beta y-ct)}\Bigr), & \text{for } \quad x+\beta y > ct, \\[6pt]
				c^2\Bigl(e^{2(x+\beta y-ct)} - e^{x+\beta y-ct}\Bigr), & \text{for } \quad x+\beta y \leq ct.
			\end{cases}
		\end{equation}
		Thanks to (\ref{global weak peak solution 12}) and (\ref{global weak peak solution 13}), we get
		\begin{equation}\label{global weak peak solution 14}
			\partial_t u_c + u_c \partial_{x} u_c + G_{x} * \Bigl(u_c^2 + \frac{1}{2}(\partial_{x} u_c)^2\Bigr) \equiv 0,
		\end{equation}
		in the sense of distribution. Next, we are going to consider
		\begin{equation}\label{global weak peak solution 15}
			G_{x} *u_c (t,x,y)=  \int_{-\infty}^{+\infty} \Bigl(-\frac{1}{2}\operatorname{sign}(x+\beta y-z)e^{-|x+\beta y-z|} \Bigr)ce^{-|z-ct|}\,dz.
		\end{equation}
		When $x+\beta y > ct$, we split the right hand side of (\ref{global weak peak solution 15}) into the following three parts:
		\begin{equation}\label{global weak peak solution 16}
			\begin{aligned}
				III &= G_{x}  \ast u_c(t,x,y) \\
				&= -\frac{c}{2} \biggl(\int_{-\infty}^{ct} + \int_{ct}^{x+\beta y} + \int_{x+\beta y}^{+\infty}\biggr) \operatorname{sign}(x+\beta y-z) e^{-|x+\beta y-z|}\cdotp e^{-|z-ct|}\,dz \\
				&=: III_1 + III_2 + III_3.
			\end{aligned}
		\end{equation}
		We directly compute $III_1$ as follows:
		\begin{equation}\label{global weak peak solution 17}
			III_1 = -\frac{c}{2} \int_{-\infty}^{ct} e^{-(x+\beta y-z)-(ct-z)}\,dz = -\frac{c}{4} e^{-(x+\beta y-ct)}.
		\end{equation}
		In a similar manner,
		\begin{equation}\label{global weak peak solution 18}
			III_2 = -\frac{c}{2} (x+\beta y-ct) e^{-(x+\beta y-ct)} \quad \text{and} \quad III_3 = \frac{c}{4}e^{-(x+\beta y-ct)}.
		\end{equation}
		Plugging (\ref{global weak peak solution 17}) and (\ref{global weak peak solution 18}) into (\ref{global weak peak solution 16}) , we deduce that for $x+\beta y> ct$,
		\begin{equation}\label{global weak peak solution 19}
			G_{x} *u_c(t,x,y) = -\frac{c}{2} (x+\beta y-ct) e^{-(x+\beta y-ct)}.
		\end{equation}
		While for the case $x+\beta y\leq ct$, we split the right hand side of (\ref{global weak peak solution 15}) into the following three parts:
		\begin{equation}\label{global weak peak solution 20}
			\begin{aligned}
				IV &= G_{x} *u_c(t,x,y) \\
				&= -\frac{c}{2} \biggl(\int_{-\infty}^{x+\beta y} + \int_{x+\beta y}^{ct} + \int_{ct}^{+\infty}\biggr) \operatorname{sign}(x+\beta y-z) e^{-|x+\beta y-z|}\cdotp e^{-|z-ct|}\,dz \\
				&=: IV_1 + IV_2 + IV_3.
			\end{aligned}
		\end{equation}
		We directly compute $IV_1$ as follows:
		\begin{equation}\label{global weak peak solution 21}
			IV_1 = -\frac{c}{2} \int_{-\infty}^{x+\beta y} e^{-(x+\beta y-z)-(ct-z)}\,dz = -\frac{c}{4}e^{x+\beta y-ct}.
		\end{equation}
		In a similar manner,
		\begin{equation}\label{global weak peak solution 22}
			IV_2 = -\frac{c}{2}(x+\beta y-ct) e^{x+\beta y-ct} \quad \text{and} \quad IV_3 = \frac{c}{4}e^{x+\beta y-ct}.
		\end{equation}
		Plugging (\ref{global weak peak solution 21}) and (\ref{global weak peak solution 22}) into (\ref{global weak peak solution 20}) , we deduce that for $x+\beta y \leq ct$,
		\begin{equation}\label{global weak peak solution 23}
			G_{x} *u_c(t,x,y) = -\frac{c}{2}(x+\beta y-ct) e^{x+\beta y-ct},
		\end{equation}
		which along with (\ref{global weak peak solution 19}) yislds
		\begin{equation}\label{global weak peak solution 24}
			G_{x} *u_c(t,x,y)=
			\begin{cases}
				-\frac{c}{2} (x+\beta y-ct) e^{-(x+\beta y-ct)}, & \text{for } \quad x+\beta y > ct, \\[6pt]
				-\frac{c}{2} (x+\beta y-ct) e^{x+\beta y-ct}, & \text{for } \quad x+\beta y \leq ct.
			\end{cases}
		\end{equation}
		We know $G_x\ast(\partial^2_yu_c)=\beta^2(G_x\ast u_c)$, which along with (\ref{global weak peak solution 24}) implies
		$$
		G_x\ast(\partial^2_yu_c)+G_x\ast\kappa u_c=(\kappa+\beta^2)G_x\ast u_c=0,
		$$
		if and only if $\kappa+\beta^2=0$ in the sense of distribution.
		
		Therefore, the CH-ZK equation(\ref{ZK equation (2)}) has a global weak solution in the peak form of $u(t,x,y) = ce^{-\left|x+\beta y-ct\right|}$
		for some constant $\beta\in\mathbb{R}$ if and only if holds that $\kappa+\beta^2=0.$ \\
		
		(2) When $\kappa+\beta^2\neq0$, for the existence of smooth solitary waves,  denote the traveling wave variable $\xi :=x+\beta y - ct$, and plug \(u = \phi(\xi)\) into the CH-ZK (\ref{ZK equation (1)}), we get
		\begin{equation}\label{peaked solitary-wave solutions 1}
			(\kappa-c)\phi' + 3\phi\phi' - 2\phi'\phi'' + (c+\beta^2)\phi''' - \phi\phi''' = 0. 
		\end{equation}
		Note that (\ref{peaked solitary-wave solutions 1}) can be written as a total derivative:
		\begin{equation}\label{peaked solitary-wave solutions 2}
			\frac{d}{d\xi}\left[ (\kappa-c)\phi + \frac{3}{2}\phi^2 - \frac{1}{2}(\phi')^2 + (c+\beta^2-\phi)\phi'' \right] = 0.
		\end{equation}
		Integrating (\ref{peaked solitary-wave solutions 2}), and imposing the solitary-wave boundary conditions \(\phi, \phi', \phi'' \to 0\) as \(|\xi| \to \infty\), one infers
		\begin{equation}\label{peaked solitary-wave solutions 4}
			(c+\beta^2-\phi) (\phi')^2 = (c-\kappa)\phi^2 - \phi^3 .
		\end{equation}
		If there exists some $\xi_0$ such that $\phi(\xi_0)=c+\beta^2$, then from (\ref{peaked solitary-wave solutions 4}), we have $\kappa+\beta^2=0$, which contradicts the assumption. Hence, $\phi(\xi)\neq c+\beta^2,\forall \xi\in\mathbb{R}$. So, we have
		\begin{equation}\label{peaked solitary-wave solutions 5}
			(\phi')^2= \phi^2 \frac{c-\kappa-\phi}{c + \beta^2 - \phi} = \phi^2 \frac{c+\beta^2 -(\kappa+\beta^2)-\phi}{c + \beta^2 - \phi}. 
		\end{equation}
		Define
		$$M:=\max\left\{\phi(\xi)\mid\xi\in\mathbb{R}\right\},\quad m:=\min\left\{\phi(\xi)\mid\xi\in\mathbb{R}\right\}.$$
		Let us firstly restrict our attention to $\kappa+\beta^2>0$. Notice that $c-\kappa$ is an extremum of $\phi$, without loss of generality, let $\phi(0)=c-\kappa$. Since $\phi(\xi)\neq c+\beta^2,\forall \xi\in\mathbb{R}$, it follows that either $c+\beta^2>M$ or $c+\beta^2<m$. Now, we assert that $c+\beta^2>M$. Indeed, if $c+\beta^2<m$, then $c-\kappa<c+\beta^2<m$, which is a contradiction. Thus, from (\ref{peaked solitary-wave solutions 5}), we get \(M=c-\kappa=\phi(0)\).
		On the other hand, it is easy to verify that $M=\phi(0)>0$, and then the function $\phi$ has no zeros on $\mathbb{R}$,
		which implies that $\phi(\xi)>0$ for all $\xi\in\mathbb{R}$. 
		Moreover, we can deduce that $\phi'$ is negative
		 when $\xi>0$, and is positive when $\xi<0$.
		We may as well only consider
	the case on the positive real axis.
		By solving (\ref{peaked solitary-wave solutions 5}), one yields
		\begin{equation}\label{peaked solitary-wave solutions 6}
			\frac{d\phi}{d\xi} = - \phi \sqrt{\frac{c-\kappa - \phi}{c + \beta^2 - \phi}}.
		\end{equation}
		Set
		\[
		\psi := \sqrt{\frac{c-\kappa - \phi}{c + \beta^2 - \phi}} \in [0, a), \quad a := \sqrt{\frac{c-\kappa}{c+\beta^2}} < 1.
		\]
		Then we get
		\begin{equation}\label{phi}
			\phi = \frac{c-\kappa - (c+\beta^2)\psi^2}{1 - \psi^2}.
		\end{equation}
		Plugging (\ref{phi}) into (\ref{peaked solitary-wave solutions 6}) and integrating it, one arrives at the following relation
		\begin{equation}\label{peaked solitary-wave solutions 7}
			\xi =  \frac{1}{a}\ln\frac{a+\psi}{a-\psi}-\ln\frac{1+\psi}{1-\psi}. 
		\end{equation}
		The right-hand side of (\ref{peaked solitary-wave solutions 7}) is a strictly increasing smooth function with derivative
		\[
		\frac{d}{d\psi}\left( \frac{1}{a}\ln\frac{a+\psi}{a-\psi}-\ln\frac{1+\psi}{1-\psi} \right) = \frac{2}{\psi^2-1} + \frac{2}{a^2-\psi^2} > 0,\quad\quad \psi\in [0,a).
		\]
		Then its inverse \(\psi = \psi(\xi)\) is smooth, so is 
		 the composite function $\phi(\psi(\xi))$.
		
		The case $\kappa+\beta^2<0$ can be considered similarly, and note that the profile of $\phi$ is now negative. Therefore, we complete the proof of Theorem \ref{xingbojie}.
	\end{proof}
	\begin{proof}[Proof of Theorem \ref{Rigidity of solitary waves}]
		(1) If $c < \min\{0, \kappa\}$, multiplying (\ref{ZK equation of travelling wave solutions-1}) by $y\partial_x^{-1}u_y$ and integrating over \( \mathbb{R}^2 \), we have  
		\begin{equation}\label{ZK equation of travelling wave solutions-2}
			\int_{\mathbb{R}^2}(\kappa-c)yuu_y+cyu_{xx}u_y+\frac{3}{2}yu^2u_y-\frac{1}{2}yu^2_xu_y-yuu_{xx}u_y+yu_yu_{yy}\,dxdy=0.
		\end{equation} 
		It follows from integration by parts that  
		\begin{equation}\label{ZK equation of travelling wave solutions-3}
			\int_{\mathbb{R}^2} (\kappa-c)yuu_y+cyu_{xx}u_y\,dxdy=\frac{1}{2}\int_{\mathbb{R}^2}(c-\kappa)u^2+cu_x^2\,dxdy,		
		\end{equation} 
		\begin{equation}\label{ZK equation of travelling wave solutions-4}
			\int_{\mathbb{R}^2}\frac{3}{2}yu^2u_y\,dxdy=-\frac{1}{2}\int_{\mathbb{R}^2}u^3\,dxdy,
		\end{equation} 
		\begin{align}
			\int_{\mathbb{R}^2}-\frac{1}{2}yu^2_xu_y-yuu_{xx}u_y\,dxdy&=\int_{\mathbb{R}^2}\frac{1}{2}yu^2_xu_y+yuu_xu_{xy}\,dxdy\nonumber\\
			&=-\frac{1}{2}\int_{\mathbb{R}^2}uu^2_x\,dxdy,\label{ZK equation of travelling wave solutions-5}
		\end{align}
		\begin{equation}\label{ZK equation of travelling wave solutions-6}
			\int_{\mathbb{R}^2}yu_yu_{yy}\,dxdy=-\frac{1}{2}\int_{\mathbb{R}^2}u^2_y\,dxdy.
		\end{equation}	
		Combining (\ref{ZK equation of travelling wave solutions-3})-(\ref{ZK equation of travelling wave solutions-6}), one gets
		\begin{equation}\label{ZK equation of travelling wave solutions-7}
			\frac{1}{2}\int_{\mathbb{R}^2}(c-\kappa)u^2+cu_x^2-u^3-uu^2_x-u_y^2 \,dxdy=0.
		\end{equation}	
		Applying the operator \( \partial_x^{-1} \) to (\ref{ZK equation of travelling wave solutions-1}), one infers
		\begin{equation}\label{ZK equation of travelling wave solutions-8}
			( \kappa-c)u + cu_{xx} + \frac{3}{2}u^2 -  \frac{1}{2}u_x^2 - uu_{xx}  + u_{yy} = 0.
		\end{equation}	
		Multiply (\ref{ZK equation of travelling wave solutions-8}) by \( u \) and integrate over \( \mathbb{R}^2 \), we deduce
		\begin{equation}\label{ZK equation of travelling wave solutions-9}
			\int_{\mathbb{R}^2}( \kappa-c)u^2 + cuu_{xx} + \frac{3}{2}u^3 -  \frac{1}{2}uu_x^2 - u^2u_{xx}  + uu_{yy}  \,dxdy = 0.
		\end{equation}	
		Thanks to integration by parts again, we get
		\begin{equation}\label{ZK equation of travelling wave solutions-10}
			\int_{\mathbb{R}^2} u^2u_{xx}\,dxdy = -2 \int_{\mathbb{R}^2} uu_x^2\,dxdy.
		\end{equation}	
		Taking (\ref{ZK equation of travelling wave solutions-10}) into (\ref{ZK equation of travelling wave solutions-9}), one yields
		\begin{equation}\label{ZK equation of travelling wave solutions-11}
			\frac{1}{2}\int_{\mathbb{R}^2} 2 ( c-\kappa)u^2 + 2cu_x^2 - 3u^3 - 3uu_x^2 + 2u^2_y  \,dxdy = 0.
		\end{equation}	
		From (\ref{ZK equation of travelling wave solutions-7}) and (\ref{ZK equation of travelling wave solutions-11}), we obtain
		\begin{equation}\label{ZK equation of travelling wave solutions-12}
			\int_{\mathbb{R}^2} \left(-(c - \kappa)u^2 -cu_x^2 + 5u_y^2\right) dxdy = 0.
		\end{equation}	
		Since \( c < \min\{0, \kappa\} \), it then follows that \( u \equiv 0 \). \\
		
		(2) We focus our attention on  the symmetry of the solution to (\ref{ZK equation of travelling wave solutions-1}) under the condition $c>\max\left\{0,\kappa\right\}$. 
		Let us firstly rewrite (\ref{ZK equation of travelling wave solutions-1}) as follows:
		\begin{equation}\label{symmetry eq-1}
			-cu_x+\frac{1}{2}\partial_x(u^2)+G_x\ast(u^2+\frac{1}{2}u_x^2+\kappa u)+G\ast u_{xyy}=0.
		\end{equation}
		Integrating (\ref{symmetry eq-1}) with respect to $x$, one has
		\begin{equation}\label{symmetry eq-2}
			-cu+\frac{1}{2}u^2+G\ast(u^2+\frac{1}{2}u_x^2+\kappa u)+G\ast u_{yy}=0.
		\end{equation}
		From (\ref{symmetry eq-2}), we get 
		\begin{equation}\label{symmetry u=H*[]}
			u=H\ast[G\ast(u^2+\frac{1}{2}u_x^2)+\frac{1}{2}u^2],
		\end{equation}
		where $\widehat{H}(\xi,\eta)=\frac{1+\xi^2}{c-\kappa+ c\xi^2 + \eta^2}$. Let $u=\phi-\phi_{xx}$. Then (\ref{symmetry u=H*[]}) implies that 
		\begin{equation}\label{symmetry phi=K*[]}
			\phi=K\ast[G\ast((\phi-\phi_{xx})^2+\frac{1}{2}(\phi-\phi_{xx})_x^2)+\frac{1}{2}(\phi-\phi_{xx})^2],
		\end{equation}
		where $\widehat{K}(\xi,\eta)=\frac{1}{c-\kappa+ c\xi^2 + \eta^2}$. We thus obtain the non‑negativity of any solution $\phi$ to (\ref{symmetry phi=K*[]})  under the assumption $c>\max\left\{0,\kappa\right\}$. Using the fact $\frac{1}{A} = \int_0^{+\infty} e^{-At} dt$, one has
		$$\frac{1}{c-\kappa + c \xi^2 + \eta^2} = \int_0^{+\infty} e^{-(c-\kappa)t}\cdot e^{-c\xi^2t} \cdot e^{-\eta^2 t}\,dt.$$
		Thus, we obtain
		\begin{align}
			K = \mathcal{F}^{-1}\left(\frac{1}{c -\kappa + c \xi^2 + \eta^2}\right) &= \int_0^{+\infty} e^{-(c-\kappa)t} \mathcal{F}^{-1}\left(e^{-c\xi^2 t}\right)(x) \mathcal{F}^{-1}\left(e^{-\eta^2 t}\right)(y) \,dt\nonumber\\
			&= \int_0^{+\infty} e^{-(c-\kappa)t} \cdot \frac{e^{-\frac{x^2}{4ct}-\frac{y^2}{4t}}}{4\pi t\sqrt{c}}  \,dt\nonumber\\
			&= \frac{1}{4\pi \sqrt{c}} \int_0^{+\infty} \frac{1}{t} e^{-(c-\kappa)t - \frac{x^2}{4ct} - \frac{y^2}{4t}} \,dt.\label{symmetry integral of K}
		\end{align}
		The integral (\ref{symmetry integral of K}) exists, provided that $c>\max\left\{0,\kappa\right\}$.  By a similar argument as in \cite{Esfahani-Levandosky}, we have $\phi(x,y)=\phi(-x,y)$. Since $u=\phi-\phi_{xx}$, it follows that $u(x,y)=u(-x,y)$, which completes the proof of Theorem \ref{Rigidity of solitary waves}.
	\end{proof}
	\noindent {\bf Acknowledgments.}  
	This work was  partially supported by the National Natural Science Foundation of China under grant 11971188.
	
	
	
	
	
	



\begin{thebibliography}{99}
		
		
		
		
		
		
		
		
		
		
		
		
		
		
		
		
		
		\bibitem{Ablowitz-Clarkson}
		{\small \textsc{Ablowitz, M.J., Clarkson, P.A.,}   Solitons, nonlinear evolution equations and inverse scattering. {\it Cambridge University Press.}, (1991), 70-104. }
		
		\bibitem{Bourgain GFA 1993}
		{\small \textsc{Bourgain, J.,}  On the Cauchy problem for the Kadomtsev-Petviashvili equation. {\it Geom. Funct. Anal.}, {\bf 3} (1993), 315-341. }	
		
		
		\bibitem{Bustamante-Isaza-Mejia JDE 2011}
		{\small \textsc{Bustamante, E., Isaza, P., Mej\'{\i}a, J.,} On the support of solutions to the Zakharov-Kuznetsov equation. {\it J. Differ. Equ.}, {\bf 251} (2011), 2728-2736.}
		
		\bibitem{Bustamante-Isaza-Mejia JFA 2013}
		{\small \textsc{Bustamante, E., Isaza, P., Mej\'{\i}a, J.,} On uniqueness properties of solutions of the Zakharov-Kuznetsov equation. {\it J. Funct. Anal.}, {\bf 264} (2013), 2529-2549.}
		
		
		\bibitem{CH} 
		{\small \textsc {Camassa, R., Holm, D.,}  An integrable shallow water equation with peaked solitons, {\it Phys. Rev. Lett.,}  {\bf 71} (1993), 1661-1664.}	
		
		\bibitem{Constantin-Escher-AsNsP}{\small \textsc{Constantin, A., Escher, J.,}   Global existence and blow-up for a shallow water equation,
			{\it Ann. Sc. Norm. Super. Pisa, Cl. Sci.}, {\bf 26} (1998), 303-328.}	
		
		\bibitem{Constantin-Escher-acta}{\small \textsc{Constantin, A., Escher, J.,}   Wave breaking for nonlinear nonlocal shallow water equations,
			{\it Acta Math.}, {\bf 181} (1998), 229-243.}	
		
		\bibitem{Constantin-Escher-CPAM}{\small \textsc{Constantin, A., Escher, J.,}   Well-posedness, global existence, and blowup phenomena for a periodic quasi-linear hyperbolic equation,
			{\it Comm. Pure Appl. Math.}, {\bf 51} (1998),  475-504.}	
		
		
		\bibitem{Chen-Fan-Wang-Xu MA 2024}
		{\small \textsc{Chen, R.M., Fan, L., Wang, X., Xu, R.,}  Spectral analysis of the periodic $b$-KP equation under transverse perturbations. {\it Math. Ann.}, {\bf 390} (2024), 6315-6354.} 
		
		\bibitem{Dai Act M 1998}
		{\small \textsc{Dai, H.H.,} Model equations for nonlinear dispersive waves in a compressible Mooney–Rivlin rod, {\it Acta Mech.},  {\bf 27} (1998) 193-207.}
		
		
		\bibitem{Esfahani-Levandosky}
		{\small \textsc{Esfahani, A., Levandosky, S.,}  \ Symmetry of the KP-type solitary waves.
			{\it Proc. Amer. Math. Soc.,} {\bf 147}  (2019), 3867-3875.} 
		
		\bibitem{Fuchssteiner-Fokas PD 1981}
		{\small \textsc{Fuchssteiner, B., Fokas, A.S.,} Symplectic structures, their Bäklund transformations and hereditary symmetries, {\it Physica D},  {\bf4} (1981) 47-66.}	
		
		\bibitem{Faminskii 1995}
		{\small \textsc{Faminskii, A.V.,} The Cauchy problem for the Zakharov-Kuznetsov equation. {\it Differ. Equ.}, {\bf 31} (1995), 1002-1012.}
		
		\bibitem{GrunrockHerr 2014}
		{\small \textsc{Gr\"unrock, A., Herr, S.,} The Fourier restriction norm method for the Zakharov-Kuznetsov equation. {\it Disc. Contin. Dyn. Syst. Ser. A.}, {\bf 34} (2014), 2061-2068.}
		
		\bibitem{gui-liu-luo-yin-21}{\small \textsc{Gui, G., Liu, Y., Luo, W., Yin, Z.,}   On a two dimensional nonlocal shallow-water model, {\it  Adv. Math.},
			{\bf392} (2021), 44 pp.}
		
		\bibitem{Geyer-Liu-Pelinovsky JMPA 2024}
		{\small \textsc{Geyer, A., Liu, Y., Pelinovsky, D.E.,}  On the transverse stability of smooth solitary waves in a two-dimensional Camassa-Holm equation. {\it J. Math. Pures Appl.}, {\bf 188} (2024), 1-25.} 
		
		\bibitem{Hadac-Herr-Koch AIHCANL 2009}
		{\small \textsc{Hadac, M., Herr, S., Koch, H.,} Well-posedness and scattering for the KP-II equation in a critical space. {\it Ann. Inst. H. Poincar\'{e} C Anal. Non Lin\'{e}aire}, {\bf 26} (2009) 917-941. }
		
		\bibitem{Korteweg-de Vries 1895}
		{\small \textsc{Korteweg, D.J., Vries, G.de.,}  On the change of form of long waves advancing in a rectangular canal, and on a new type of long stationary waves.  {\it Philos. Mag.}, {\bf 39} (1895), 422-442.} 
		
		\bibitem{KP-70} {\small \textsc{Kadomtsev, B.B., Petviashvili, V.I.,} \ On the stability of solitary waves in weakly dispersing media, {\it Sov. Phys. Dokl.,} {\bf 15} (1970), 539-541.}
		
		\bibitem{Korpusov2014} {\small \textsc{Korpusov, M.O., Sveshnikov, A.G., Yushkov, E.V.,} \ Blow-up of solutions of non-linear equations of Kadomtsev–Petviashvili and Zakharov–Kuznetsov types, {\it Izv. Math.,} {\bf 78} (2014), 500-530.}
		
		\bibitem{KaTo T-Pomce G}
		{\small \textsc{Kato, T., Ponce, G.,}  \ Commutator estimates and the Euler and Navier-Stokes equations.
			{\it Comm. Pure Appl. Amth.,} {\bf 41}  (1988), 891-907.} 
		
		
		\bibitem{Linares-Panthee-Robert-Tzvetkov 2019}
		{\small \textsc{Linares, F., Panthee, M., Robert, T., Tzvetkov, N.,} On the periodic Zakharov-Kuznetsov equation. {\it Discrete Contin. Dyn. Syst.}, {\bf 39} (2019) 3521-3533.}
		
		\bibitem{Linares & Ponce}
		{\small \textsc{Linares, F., Ponce, G.,} Unique continuation properties for solutions to the Camassa-Holm equation and related models. {\it Proc. Amer. Math. Soc.}, {\bf 248} (2020) 3871-3879.}
		
		\bibitem{Moser J}
		{\small \textsc{Moser, J.,}  \ A rapidly convergent iteration method and nonlinear partial differential equations.
			{\it I. Ann. Scuola Norm. Sup. Pisa Cl. Sci. (3),} {\bf 20}  (1966), 265-315.} 
		
		\bibitem{Molinet-Saut-Tzvetkov AIHCANL 2011}
		{\small\textsc{Molinet, L., Saut, J., Tzvetkov, N.,} Global well-posedness for the KP-II equation on the background of a non-localized solution. {\it Ann. Inst. H. Poincar\'{e} C Anal. Non Lin\'{e}aire.}, {\bf 28} (2011) 653-676.  }
		
		
		\bibitem{Molinet-Pilod 2015}
		{\small \textsc{Molinet, L., Pilod, D.,} Bilinear Strichartz estimates for the Zakharov-Kuznetsov equation and applications. {\it Ann. Inst. H. Poincar\'e, Annal. Non.}, {\bf 32} (2015), 347-371.}
		
		\bibitem{Mizumachi MAMS 2015}
		{\small \textsc{Mizumachi, T.,} Stability of line solitons for the KP-II equation in $\mathbb{R}^2$. {\it Mem. Amer. Math. Soc.}, {\bf 238} (2015), vii+95 pp.} 
		
		\bibitem{Panthee 2004}
		{\small \textsc{Panthee, M.,} A note on the unique continuation property for Zakharov-Kuznetsov equation. {\it Nonlinear Anal.}, {\bf 59} (2004), 425-438.}
		
		
		
		\bibitem{Ribaud-Vento 2012 }
		{\small \textsc{Ribaud, F., Vento, S.,} Well-posedness results for the 3D Zakharov-Kuznetsov equation. {\it SIAM J. Math. Anal.}, {\bf 44} (2012), 2289-2304.}
		
		\bibitem{Seadawy 2014}
		{\small \textsc{Seadawy, A.R.,} Stability analysis for two-dimensional ion-acoustic waves in quantum plasmas. {\it Phys. Plasmas.}, {\bf 21} (2014), 9 pp.}
		
		
		\bibitem{Xin-Zhang CPAM 2000}
		{\small \textsc{Xin, Z., Zhang, P.,} On the weak solutions to a shallow water equation,  {\it Comm. Pure Appl. Math.}, {\bf  53} (2000) 1411-1433.}
		
		\bibitem{Zakharov-Kuznetsov JETP 1974}
		{\small \textsc{Zakharov, V.E., Kuznetsov, E.A.,} On three dimensional solutions. {\it Sov. Phys. JETP.}, {\bf 39} (1974) 285-286. }
		
		
		
		
		
		
		
		
		
		
		
	\end{thebibliography}
\end{document}